\theoremstyle{plain}
\newtheorem{thm}{Theorem}[section]
\newtheorem{prop}[thm]{Proposition}
\newtheorem{lem}[thm]{Lemma}
\theoremstyle{remark}
\newtheorem{rem}{Remark}
\numberwithin{equation}{section}
\DeclareMathOperator{\hdim}{\dim_H}
\newcommand{\dist}{\mathrm{dist}}
\newcommand{\Q}{\mathbb Q}
\newcommand{\N}{\mathbb N}
\newcommand{\R}{\mathbb R}
\newcommand{\lm}{\mathcal L}
\begin{document}
	\title{On Dirichlet non-improvable numbers and shrinking target problems}

\author{Qian Xiao}
\address{School of Mathematics and Statistics, Southwest University, Chongqing,400715, P.R. China}
\email{xiaoqianmath@163.com}

\subjclass[2010]{11K50, 28A80}

\keywords{Dirichlet non-improvable sets, shrinking target problems, Hausdorff dimension}

\begin{abstract}
	In one-dimensional Diophantine approximation, the Diophantine properties of a real number are characterized by its partial quotients, especially the growth of its large partial quotients. Notably, Kleinbock and Wadleigh [Proc. Amer. Math. Soc. 2018] made a seminal contribution by linking the improvability of Dirichlet's theorem to the growth of the product of consecutive partial quotients. In this paper, we extend the concept of Dirichlet non-improvable sets within the framework of shrinking target problems. Specifically, consider the dynamical system $([0,1), T)$ of continued fractions. Let $\{z_n\}_{n \ge 1}$ be a sequence of real numbers in $[0,1]$ and let $B > 1$. We determine the Hausdorff dimension of the following set:
	\[
	\begin{split}
		\{x\in[0,1):|T^nx-z_n||T^{n+1}x-Tz_n|<B^{-n}\text{ infinitely often}\}.
	\end{split}
	\]

\end{abstract}

	\maketitle

\section{Introduction}
The central question in Diophantine approximation is: how well can a given real number be approximated by rational numbers?
In one-dimensional settings, continued fraction serves as an important tool for this purpose, providing an algorithmic solution for finding the best rational approximation of a given real number.
The continued fraction can be computed by the Gauss transformation $T: [0,1) \to [0,1)$ defined as
\[
T(0)=0,~~T(x)= 1/x-\lfloor 1/x\rfloor ~\text{if}~ x \in (0,1),
\]
where $\lfloor 1/x\rfloor$ is the integer part of  $1/x$.
For $x \in (0,1)$, put $a_1 (x) =\lfloor 1/x\rfloor $ and $a_{n+1}(x)=\lfloor 1/T^{n}(x)\rfloor=a_1(T^n (x))$ for $n \geq 1$.
Then $x \in (0,1)$ can be written as the continued fraction expansion
\begin{equation}
x= \cfrac{1}{a_1(x)+\cfrac{1}{a_2(x)+\ddots}}=:[a_1(x),a_2(x), \cdots]
\end{equation}
where $a_1(x), a_2(x), \dots$ are positive integers, and called the {\it partial quotients} of $x$. Let $x = [a_1(x), a_2(x), \dots]$ be its continued fraction expansion and the truncation $p_n(x)/q_n(x)=[a_1(x),a_2(x),\dots,a_n(x)]$ be its $n$th {\it convergent}.
The continued fraction expansion of a real number is widely recognized for its significant role in studying one-dimensional homogeneous Diophantine approximation.
This can be inferred from the following two fundamental results.

\begin{thm}
Optimal rational approximation of the convergent:
\begin{equation*}
	\min_{1 \leq q \leq q_n(x), p \in \mathbb{Z}} \bigg| x - \frac{p}{q} \bigg| =\bigg| x - \frac{p_n(x)}{q_n(x)} \bigg|,~\min_{1 \leq q <q_n(x)} \| qx \| =\| q_{n-1}(x) x \|,
\end{equation*}
where $ \| \cdot \|$ denotes the distance to the nearest integer.

Legendre's theorem:
\begin{equation*}
	\bigg| x - \frac{p}{q} \bigg| < \frac{1}{2q^2}  \Longrightarrow \frac{p}{q} =\frac{p_n(x)}{q_n(x)}, ~\text{for~some} ~n \geq 1.
\end{equation*}
\end{thm}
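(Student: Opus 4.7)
The plan is to treat the two classical identities separately, writing $p_n, q_n$ for $p_n(x), q_n(x)$, and leveraging the convergent recursions $p_n = a_n p_{n-1} + p_{n-2}$, $q_n = a_n q_{n-1} + q_{n-2}$ together with the mirror formula
\[
x = \frac{p_n + T^n(x)\, p_{n-1}}{q_n + T^n(x)\, q_{n-1}}, \qquad p_n q_{n-1} - p_{n-1} q_n = (-1)^n,
\]
both proved by induction on $n$. These yield the fundamental identity
\[
|q_n x - p_n| = \frac{1}{q_{n+1} + T^{n+1}(x)\, q_n},
\]
which is strictly decreasing in $n$.

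For the optimal approximation identities, fix integers $1 \le q < q_{n+1}$ and $p$. The determinant identity lets one solve $p = u p_n + v p_{n+1}$, $q = u q_n + v q_{n+1}$ in integers $u, v$; a sign analysis (using $q < q_{n+1}$) shows $u, v$ have opposite signs or one vanishes. Since consecutive convergents lie on opposite sides of $x$, the terms $u(q_n x - p_n)$ and $v(q_{n+1} x - p_{n+1})$ then have the \emph{same} sign, whence
\[
|qx - p| = |u|\cdot |q_n x - p_n| + |v|\cdot |q_{n+1} x - p_{n+1}| \ge |q_n x - p_n| = \|q_n x\|.
\]
This yields the $\|\cdot\|$-identity (after reindexing). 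The first identity follows from $|x - p/q| \ge \|qx\|/q$ combined with monotonicity.

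For Legendre's theorem, suppose $|x - p/q| < 1/(2q^2)$ with $\gcd(p, q) = 1$. Expand $p/q = [b_1, \ldots, b_k]$ as a finite continued fraction; the parity of $k$ is at our disposal via $[b_1, \ldots, b_k] = [b_1, \ldots, b_k - 1, 1]$ (when $b_k \ge 2$). Setting $p'/q' := [b_1, \ldots, b_{k-1}]$, define $\xi$ by $x = (p\xi + p')/(q\xi + q')$. A direct computation yields $|x - p/q| = 1/(q(q\xi + q'))$, and with the parity of $k$ chosen so that $\xi > 0$, one finds
\[
\xi > 1 \iff |x - p/q| < \frac{1}{q(q + q')}.
\]
Since $q' < q$, the right-hand side exceeds $1/(2q^2)$, so the hypothesis gives $\xi > 1$. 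Therefore $x$'s continued fraction expansion begins with $[b_1, \ldots, b_k, \lfloor \xi \rfloor, \ldots]$, and $p/q = p_k(x)/q_k(x)$ is a convergent.

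The principal delicacy is the parity selection and the verification $\xi > 1$ in Legendre's theorem, which is precisely where the sharp constant $1/(2q^2)$ is essential; the remainder is routine bookkeeping with the continued fraction recursions.
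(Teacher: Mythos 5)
The paper states this theorem as classical background without proof (with Khinchin and Iosifescu--Kraaikamp as the implicit references), so there is no in-paper argument to compare against. Your proposal is a correct rendition of the standard textbook proof: the unimodular decomposition $p = u p_n + v p_{n+1}$, $q = u q_n + v q_{n+1}$ together with the alternating sign of $q_j x - p_j$ gives the best-approximation law, and the mirror-formula parity argument gives Legendre. Two details worth tightening: (i) passing from the $\|\cdot\|$-identity to the $|x - p/q|$-identity is not a single ``monotonicity'' step; for $q < q_n$ one needs the explicit chain $|x - p/q| = |qx - p|/q \ge \|q_{n-1}x\|/q > \|q_{n-1}x\|/q_n > \|q_n x\|/q_n = |x - p_n/q_n|$, and the boundary case $q = q_n$ deserves a separate remark; (ii) the parity toggle $[b_1,\dots,b_k] = [b_1,\dots,b_k - 1, 1]$ is unavailable precisely when $p/q = [1]$, but there $q = q' = 1$ and the strict hypothesis $|x - p/q| < 1/(2q^2) = 1/(q(q+q'))$ still forces $\xi > 1$, so the argument survives once this is noted.
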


Building upon these two results, the Diophantine properties of a real number are largely characterized by its partial quotients, especially the growth of its large partial quotients within the consideration of the current paper.

The metrical theory of continued fractions, which concerns the size
 (in terms of measure or Hausdorff dimension, etc.) of the sets obeying some restrictions on their partial quotients, is an important subject in studying continued fractions. One focus is the study of the following sets
\begin{equation*}
	E_m (B) : = \{x \in [0,1): a_n(x) a_{n+1}(x) \cdots a_{n+m-1}(x) \geq B^n~\text{i.o.}\},
\end{equation*}
where $m\in \N$,  $B>1$ and `` i.o." stands for `` infinitely often". It is worth noting that the sets $E_1(B)$ and $E_2(B)$ are related to homogeneous Diophantine approximation and Dirichlet non-improvable numbers (see \cite[Lemma 2.2]{KW2018}), respectively. The Hausdorff dimension of $E_m(B)$ is completely given in the following result:
\begin{thm}[{\cite{WW2008}} and {\cite[Theorem 1,7]{HWX2020}}]
We have
\begin{equation*}
	\hdim E_m(B)=\inf \{ s: P(T, -f_m(s) \log B - s \log |T'|) \leq 0 \},
\end{equation*}
 where $\hdim$ denotes the Hausdorff dimension, $P(T,\cdot)$ is a pressure function defined in Section \ref{ss:pressure}, and $f_m(s)$ is given by the following iterative formulae:
\begin{equation*}
f_1(s) =s,~f_{k+1} (s) = \frac{sf_k(s)}{1-s+f_k(s)},~k \geq 1.
\end{equation*}
\end{thm}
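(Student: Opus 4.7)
The plan is to prove matching upper and lower bounds $\hdim E_m(B) = s^\ast$, where $s^\ast$ denotes the infimum in the theorem statement. I will proceed by induction on $m$: the base case $m=1$ is the Wang--Wu theorem, and the inductive step is what produces the recursion $f_{k+1}(s)=sf_k(s)/(1-s+f_k(s))$.

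For the \textbf{upper bound}, write $E_m(B)=\limsup_n J_n$ where
\[
J_n=\bigcup_{\substack{(a_1,\ldots,a_{n+m-1})\\ \prod_{j=0}^{m-1}a_{n+j}\geq B^n}} I_{n+m-1}(a_1,\ldots,a_{n+m-1}),
\]
and cover $E_m(B)$ by these cylinders for $n\geq N$. Using $|I_{n+m-1}|\asymp q_{n+m-1}^{-2}$, the $s$-dimensional covering sum factorizes into (i) an outer sum over the free initial coordinates $a_1,\ldots,a_{n-1}$, which by the definition of topological pressure grows like $\exp((n-1)\,P(T,-s\log|T'|))$, and (ii) an inner sum over the constrained $m$-block $(a_n,\ldots,a_{n+m-1})$ with $\prod_j a_{n+j}\geq B^n$. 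For (ii) I would introduce scale parameters $\alpha_j\geq 0$ with $a_{n+j}\asymp B^{n\alpha_j}$ and $\sum_j \alpha_j \geq 1$, evaluate the geometric sums within each size class, and then optimize over feasible $(\alpha_j)$. The supremum exponent equals $f_m(s)\log B$, so the covering sum decays exponentially whenever $P(T,-s\log|T'|)-f_m(s)\log B <0$, giving $\hdim E_m(B)\leq s^\ast$.

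For the \textbf{lower bound}, fix $s<s^\ast$ and build a Cantor subset $F\subseteq E_m(B)$ realizing the optimal allocation $(\alpha_j^\ast)$ from the upper bound. Choose a sufficiently lacunary sequence $n_k\to\infty$ and let $F$ consist of those $x$ satisfying $a_{n_k+j}(x)\in [B^{n_k\alpha_j^\ast},\,2B^{n_k\alpha_j^\ast})$ for every $k$ and $0\leq j<m$, with all other partial quotients restricted to $\{1,\ldots,N\}$ for a large parameter $N$. Place a natural product-like measure $\mu$ on $F$ distributing mass uniformly across the admissible symbols, and apply the mass distribution principle: directly compare $\log\mu(I_n)$ with $\log|I_n|$ using the standard cylinder-length bound, noting that the "free" segments contribute dimension tending to $1$ as $N\to\infty$ (since the corresponding Gauss-pressure root tends to $1$), while the constrained blocks contribute exactly the loss $f_m(s)\log B$. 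Sending $N\to\infty$ yields $\hdim F\geq s$.

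The \textbf{main obstacle} is the iterative structure of $f_m$. The recursion $f_{k+1}(s)=sf_k(s)/(1-s+f_k(s))$ should emerge from solving the constrained optimization on $(\alpha_j)$ one coordinate at a time: fixing the extremal value of the last coordinate $\alpha_m^\ast$ reduces the problem to $m-1$ variables with effective target $B^{n(1-\alpha_m^\ast)}$, and the induction hypothesis gives optimal exponent $f_{m-1}(s)$ for this subproblem; the remaining univariate optimization in $\alpha_m$ then produces the stated rational recurrence for $f_m$. Carefully justifying this peeling (that the Lagrange-type optimum is interior, that concavity ensures a unique maximizer, and that the cylinder geometry at widely varying scales $B^{n_k\alpha_j^\ast}$ does not leak dimension) is the technical crux, and matching the upper and lower bound constructions to the same $(\alpha_j^\ast)$ is essential to avoid losses when some $\alpha_j^\ast$ are much larger than others.
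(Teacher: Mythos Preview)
The paper does not prove this theorem. It is stated in the introduction as a known background result, with explicit citations to Wang--Wu \cite{WW2008} (the case $m=1$) and Huang--Wu--Xu \cite[Theorem~1.7]{HWX2020} (general $m$); the body of the paper is devoted exclusively to the new Theorem~\ref{t:main} concerning $E_2(\{z_n\}_{n\ge1},B)$. So there is no ``paper's own proof'' against which to compare your proposal.

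That said, your outline is broadly in the spirit of those cited sources: the upper bound via covering by cylinders and summing $|I_{n+m-1}|^s$, and the lower bound via a Cantor construction with the mass distribution principle, are indeed the standard machinery. A couple of cautions on your sketch. First, your derivation of the recursion for $f_m$ via a continuous Lagrange-type optimization over $(\alpha_j)$ with $\sum\alpha_j\ge 1$ is heuristic; in the actual arguments the inner sum over the constrained block is handled by a direct combinatorial/dyadic decomposition and the recursion comes out of an explicit computation rather than an abstract convexity argument, so your ``peeling'' step would need to be made concrete. Second, in your lower bound, restricting the free partial quotients to $\{1,\dots,N\}$ and then sending $N\to\infty$ does not by itself give dimension tending to $1$ in a way that recovers the full pressure term; one has to use the approximation property of the pressure function by finite subsystems (as in Proposition~\ref{p:property of s} of the present paper) and match the choice of $N$ to the target $s<s^\ast$. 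These are the points where a genuine proof, as opposed to a plan, requires work.
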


There are many studies on Hausdorff dimensions of the sets related to $E_m(B)$, for example \cite{BBH2020-ETDS, BBH2020, Good1941,HWX2020,HLS2024, HS2023, LWX2023, Luczak1997,WW2008}.

Since the partial quotients can be obtained through Gauss map, the theory also has close connections with dynamical systems and ergodic theory. Note that
\[a_n(x)=a_1(T^{n-1}x)=\bigg\lfloor\frac{1}{T^{n-1}x}\bigg\rfloor\in \bigg[\frac{1}{2T^{n-1}x},\frac{1}{T^{n-1}x}\bigg],\]
and so
\begin{align}\label{eq:1}
	a_n(x)\ge B^n \Longrightarrow T^{n-1}x\le B^{-n} \quad \text{and}\quad
T^{n-1}x\le B^{-n}	\Longrightarrow a_n(x)\ge B^n/2.
\end{align}
In other words, the $n$th partial quotient being sufficiently large corresponds to $T^{n-1}x$ being sufficiently close to 0. From this simple observation, Li, Wang, Wu and Xu \cite{LWWX2014} came to the following generalization of $E_1(B)$:
\[E_1(\{z_n\}_{n\ge 1},B):=\{x\in[0,1]:|T^nx-z_n|\le B^{-n}~\text{ i.o.}\},\]
where $\{z_n\}_{n\ge 1}$ is a sequence of real numbers in $ [0,1] $.
They further showed that the Hausdorff dimension of this set is the same as that of $E_1(B)$.  It is not difficult to deduce from \eqref{eq:1} that $E_1(\{z_n\}_{n\ge 1},B)$ almost returns to $E_1(B)$ if $z_n\equiv 0$ for all $n\ge 1$. The study of the metrical property of $E_1(\{z_n\}_{n\ge 1},B)$ is also referred to as {\em shrinking target problem}, which is initially introduced by Hill and Velani \cite{HV1997}, and has gained much attention these decades.
See \cite{AB2021, BR2018, BW2014, HV1999, KLR2022, LLVZ2023} and reference therein.

Following this kind of philosophy and starting from yet another observation
\[
a_n(x)a_{n+1}(x)\ge B^n\Longrightarrow T^{n-1}x\cdot T^nx\le B^{-n},
\]
and
\[
T^{n-1}x\cdot T^nx\le B^{-n}\Longrightarrow a_n(x)a_{n+1}(x)\ge B^n/4,
\]
we introduce the following generalization of $E_2(B)$:
\[E_2(\{z_n\}_{n\ge 1},B):=\{x\in[0,1):|T^nx-z_n||T^{n+1}x-Tz_n|<B^{-n}~\text{ i.o.}\}\]
with $\{z_n\}_{n\ge 1}$ and $B>1$ given above.
 The Hausdorff dimension of the set $ E_2(\{z_n\}_{n\ge 1},B) $ is completely determined in the current work. For each $ n\ge 1 $, let us define three quantities as follows
\begin{equation}\label{eq:sni}
	\begin{split}
		s_{n,1}&=\inf\bigg\{s\in[0,1]:\sum_{a_1,\dots,a_n\in\N}\frac{1}{q_n(a_1,\dots,a_n)^{2s}B^{ns^2}}\le 1\bigg\},\\
		s_{n,2}&=\inf\bigg\{s\in[0,1]:\sum_{a_1,\dots,a_n\in\N}\frac{a_1(z_n)^{1-s}}{q_n(a_1,\dots,a_n)^{2s}B^{ns}}\le 1\bigg\},\\
		s_{n,3}&=\inf\bigg\{s\in[0,1]:\sum_{a_1,\dots,a_n\in\N}\frac{1}{q_n(a_1,\dots,a_n)^{2s}a_1(z_n)^sB^{ns/2}}\le 1\bigg\}.
	\end{split}
\end{equation}
We adopt the convention that $ a_1(0)=+\infty $, in this case, set $ s_{n,2}=1 $ and $ s_{n,3}=0 $. The $ n $th pre-dimensional number is defined by
\begin{equation}\label{eq:sn}
	s_n=\begin{cases}
		s_{n,1}\quad&\text{if }s_{n,1}\le s_{n,2},\\
		\max \{ s_{n,2},s_{n,3} \}\quad&\text{if }s_{n,1}> s_{n,2}.
	\end{cases}
\end{equation}

\begin{thm}\label{t:main}
	We have
	\[
	\hdim E_2(\{z_n\}_{n\ge 1},B)=\limsup_{n\to\infty} s_n : = s^*.
	\]
\end{thm}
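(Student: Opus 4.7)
Write $E_2 := E_2(\{z_n\}_{n\ge 1},B)=\limsup_{n\to\infty}A_n$, where
\[A_n=\{x\in[0,1):|T^nx-z_n|\,|T^{n+1}x-Tz_n|<B^{-n}\}.\]
The strategy is to reduce to the geometry of the base target
\[S_n:=\{y\in[0,1):|y-z_n|\,|Ty-Tz_n|<B^{-n}\}\]
and then transfer everything to level-$n$ cylinders via $T^n$, using that $T^n$ maps each cylinder $I_n(a_1,\dots,a_n)$ bijectively onto $[0,1)$ with length $|I_n|\asymp q_n(a_1,\dots,a_n)^{-2}$ and bounded distortion. Setting $a:=a_1(z_n)$, I would decompose $S_n$ along the rank-one cylinders $I^{(1)}_k=[\tfrac{1}{k+1},\tfrac{1}{k}]$: on $I^{(1)}_a$ the identity $Ty-Tz_n=(z_n-y)/(yz_n)$ reduces the inequality to $|y-z_n|^2\lesssim B^{-n}/a^2$, so $S_n\cap I^{(1)}_a$ is an interval around $z_n$ of radius $\asymp a^{-1}B^{-n/2}$, while on $I^{(1)}_k$ with $k\ne a$ the factor $|y-z_n|$ is pinned below by the gap between cylinders and $Ty$ is free, producing a neighbourhood of $(k+Tz_n)^{-1}$ of length $\asymp B^{-n}/(k^2|y-z_n|)$.

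The three quantities $s_{n,1},s_{n,2},s_{n,3}$ from \eqref{eq:sni} are precisely the exponents produced by three natural cover schemes at scale $B^{-n}$: a balanced scheme that splits $|T^nx-z_n|$ and $|T^{n+1}x-Tz_n|$ evenly, which is the pressure-type cover recovering the Hausdorff dimension of $E_2(B)$ as in \cite{HWX2020} (yielding $s_{n,1}$); a cover by level-$(n+1)$ cylinders in which only the first factor is forced to be small and the free partial quotient $a_{n+1}(x)$ plays against $a$ (yielding $s_{n,2}$); and a cover confined to the diagonal sub-cylinder $\{a_{n+1}(x)=a\}$, on which the derivative of $T$ at $z_n$ couples both factors (yielding $s_{n,3}$). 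For the upper bound I would, for each large $n$, cover $A_n$ cylinder by cylinder with the most efficient of these three schemes, compute the resulting $s$-Hausdorff sum, and apply the Hausdorff--Cantelli lemma to the $\limsup$: the defining inequalities in \eqref{eq:sni} and the case distinction \eqref{eq:sn} then force these sums to be summable for any $s>s^*$, whence $\hdim E_2\le s^*$.

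For the lower bound I would fix $s<s^*$ together with a subsequence $(n_k)$ along which $s_{n_k}\to s^*$, and build a Cantor-type subset $F\subset E_2$. At each $n_k$ I select the regime realising $s_{n_k}$ and keep, among the allowed level-$n_k$ cylinders, the prescribed sub-cylinder or sub-interval of scale $B^{-n_k}$ that forces $x\in A_{n_k}$; between the levels $n_k$ and $n_{k+1}$ I insert freely chosen partial quotients bounded by a slowly growing parameter $M$, in the spirit of the Cantor constructions used for $E_m(B)$ in \cite{HWX2020}. A mass distribution argument applied to a measure that spreads uniformly on the chosen branches then yields $\hdim F\ge s$, and letting $s\nearrow s^*$ closes the proof.

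The main obstacle I anticipate is that $T^{n+1}x=T(T^nx)$ is a deterministic function of $T^nx$, so the two factors in the target inequality are strongly coupled whenever $T^nx$ lies in the same rank-one cylinder as $z_n$. This coupling is precisely what forces the three different exponents in \eqref{eq:sni} to appear, and it is where the definition \eqref{eq:sn} performs the non-trivial switch between regimes. Matching the upper-bound cover with the lower-bound Cantor subset at the transitions $s_{n,1}=s_{n,2}$ and $s_{n,2}=s_{n,3}$, uniformly in $n$, is the delicate combinatorial core of the argument, and is what motivates the somewhat intricate piecewise definition of the pre-dimensional numbers $s_n$.
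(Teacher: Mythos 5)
Your upper-bound strategy matches the paper's: decompose $F_n$ (equivalently your $A_n$) cylinder-by-cylinder along the $(n{+}1)$st partial quotient, use the explicit identity relating $|T^nx-z_n|$ to $a_1(z_n)$, $a_{n+1}(x)$ and $|T^{n+1}x-Tz_n|$, estimate the diameter of $F_n\cap I_{n+1}(\bm a,a_{n+1})$ in each regime, and sum the $s$-powers to get a summable $s$-volume for any $s>s^*$. One gloss to fix: on the rank-one cylinder $I^{(1)}_k$ with $k=a_1(z_n)\pm 1$ your statement that $|y-z_n|$ is ``pinned below by the gap between cylinders'' fails (the cylinders are adjacent), and the trace $S_n\cap I^{(1)}_k$ can be a \emph{union of two} intervals near both endpoints rather than one neighbourhood of $(k+Tz_n)^{-1}$; the paper isolates this as a separate subcase precisely because the lower bound on the distance breaks there. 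It does not change the final cover estimate, but it must be treated, and your sketch omits it.

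For the lower bound you take a genuinely different route. You propose the classical multi-level Cantor construction: pick a sparse subsequence $(n_k)$ realizing $s^*$, force the constraint at each $n_k$, insert bounded free digits in between, and apply the mass distribution principle to the resulting nested set. The paper instead works \emph{one scale at a time}: for each admissible $n$ it builds a measure $\mu_i$ supported on $F_n\cap I_k(\bm u)$ satisfying a H\"older estimate, concludes $\mathcal H^t_\infty(F_n\cap I_k(\bm u))\gtrsim|I_k(\bm u)|$ for infinitely many $n$ and every cylinder $I_k(\bm u)$, and then invokes a large-intersection lemma (Lemma~\ref{l:LIP}, from \cite{He2024}) to pass to the limsup set. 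The paper's method is structurally lighter because it never has to track interactions across scales $n_k<n_{k+1}$; yours would have to, and your description of the in-between digits as ``freely chosen'' is not quite right — the constraint $|T^{n_k+1}x-Tz_{n_k}|<\delta_{n_k}$ forces roughly $\log(1/\delta_{n_k})/\log 2$ of the partial quotients after position $n_k+1$ to agree with those of $Tz_{n_k}$, so only the digits after that block are free. This is repairable (it is the approach of \cite{WW2008, LWWX2014}), but you should make the forced block explicit and control its length uniformly so the free block still dominates. You would also need, as the paper does, the pressure-function continuity (Proposition~\ref{p:property of s}) to find a finite sub-alphabet $\{1,\dots,M\}^\ell$ on which the relevant exponent is still $>t$; you allude to this via \cite{HWX2020} but it deserves to be spelled out, since it is what makes the Cantor construction have the right local dimension.

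Finally, for the lower bound one must split into three cases according to which of the subsequences $\{s_{n,1}\}$, $\{s_{n,2}\}$, $\{s_{n,3}\}$ realizes $s^*$, and in the latter two cases extract a further subsequence along which $n^{-1}\log a_1(z_n)$ converges to some $\alpha$ (resp.\ $\beta$), so that the measure weights can be written in terms of a single pressure equation. Your sketch says ``select the regime realising $s_{n_k}$'' at each $n_k$, which would in principle let the regime change from one $k$ to the next; that makes the Cantor-set measure awkward to normalize and analyze. The paper fixes the regime once and for all by passing to a subsequence; you should too.
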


\begin{rem}
	We consider the approximation of $ |T^n x - z_n| |T^{n+1} x - T z_n| $ instead of $ |T^n x - z_n| |T^{n+1} x - y_n| $ for the following reasons.
	The latter focuses on the interaction between the approximation of $ T^n x $ to $ z_n $ and that of $ T^{n+1} x $ to $ y_n $. Assume for the moment that $ T^n x $ and $ T^{n+1} x $ are sufficiently close to $ z_n $ and $ y_n $, respectively. Then, most prefixes of the continued fraction expansions of $T^nx$ and $z_n$ are the same. The same applies to $T^{n+1}x$ and $y_n$. These imply that $y_n$ is sufficiently close to $Tz_n$, and so is $ |T^n x - z_n| |T^{n+1} x - T z_n| $ to $ |T^n x - z_n| |T^{n+1} x - y_n| $. Therefore, our setting is not as restrictive as it may seem.

\end{rem}
\begin{rem}
	The Hausdorff dimension of $E_2(\{z_n\}_{n\ge 1},B)$ depends on the location of $z_n$, more precisely, on $a_1(z_n)$, which does not happen with $E_1(\{z_n\}_{n\ge 1},B)$. Let us illustrate it with two simple examples. If $z_n\equiv 0$ for all $n\ge 1$, then $E_2(\{z_n\}_{n\ge 1},B)$ almost reduces to $E_2(B)$. This motivates the definition of $s_{n,1}$. If $z_n\equiv [1,1,\dots]$ for all $n\ge 1$, then by a simple fact from the theory of continued fraction,
	\[\begin{split}
		&|T^nx-z_n|\text{ is small enough}\\
		\Longrightarrow &|T^nx-z_n|=|T^{n+1}x-Tz_n|, \text{ up to a multiplicative constant}.
	\end{split}\]
	Therefore, up to a multiplicative constant
	\[|T^nx-z_n||T^{n+1}x-Tz_n|<B^{-n}\Longrightarrow |T^nx-z_n|<B^{-n/2}.\]
	This motivates the definition of $s_{n,3}$. As for $s_{n,2}$, it can be interpreted as follows: As $z_n$ varies from $1$ to $0$, the optimal cover of $E_2(\{z_n\}_{n\ge 1},B)$ changes, leading to the definition of $s_{n,2}$.
\end{rem}

The structure of this paper is as follows.
In Section 2, we recall several notions and elementary properties of continued fractions.
We prove the upper bound and lower bound of Hausdorff dimension of the set $E_2(\{z_n\}_{n\ge 1},B)$ in Sections 3 and 4, respectively.


\section{Preliminaries}

In this section, we recall some basic properties of continued fractions and pressure functions, as well as establishing some basic facts. Throughout, for two variables $f$ and $g$, the notation $f\ll g$ means that $f\le cg$ for some
unspecified constant $c$, and the notation $f\asymp g$ means that $f\ll g$ and $g\ll f$.
For a set $A$, $|A|$ stands for the diameter of $A$.
We use $\mathcal L$ to represent the Lebesgue measure.

\subsection{Continued fraction}

It is well-known that if $x \in (0,1)$ is a rational number, the expansion of $x$ is finite; if $x \in (0,1)$ is {\color{red} an} irrational number, the expansion of $x$ is infinite.
A finite {\color{red}truncation} on the expansion of $x$ gives rational fraction $p_n(x)/q_n(x): =[a_1(x), a_2(x), \ldots, a_n(x)]$, which is called the $n$th convergents of $x$.
With the conventions
 \[
 p_{-1}=1,\ q_{-1}=0,\ p_0=0~ \text{and} ~q_0=1,
 \]
 the sequence  $p_n=p_n(x)$ and $q_n =q_n(x)$ can be given by the following recursive relations
\begin{equation}\label{convergents}
	p_{n+1}=a_{n+1}(x)p_n+p_{n-1},~q_{n+1}=a_{n+1}(x)q_n+q_{n-1}.
 \end{equation}
Clearly, $q_n(x)$ is determined by $a_1(x), \ldots, a_n(x)$.
So we may write $q_n(a_1(x),\ldots, a_n(x))$.
When no confusion is likely to arise, we write $a_n$ and $ q_n$, respectively, in place of $a_n(x)$ and $q_n(x)$ for simplicity.

For an integer vector $(a_1,a_2,\ldots,a_n) \in \mathbb{N}^n$ with $ n\geq 1$, denote
\begin{equation}
	I_n (a_1, a_2, \ldots,a_n):=\{x \in [0,1): a_1(x)=a_1, a_2(x)=a_2, \ldots, a_n(x)=a_n\}
\end{equation}
for the corresponding $n$th level {\em cylinder}, i.e. the set of all real numbers in $[0,1)$ whose continued fraction expansions begin with $(a_1, \ldots, a_n)$.

We will frequently use the following well-known properties of continued fraction expansion. They are explained in the standard texts \cite{IK2002, Khintchine1963}.

\begin{prop}\label{p:basic property}
For any positive integers $a_1, \ldots, a_n$, let $p_n = p_n(a_1, \ldots, a_n)$ and $q_n = q_n (a_1, \ldots, a_n)$ be defined recursively by (\ref{convergents}).
\begin{enumerate}
	\item $q_n \geq 2^{(n-1)/2}$ and
	\begin{equation}
		\prod_{i=1}^n a_i \leq q_n \leq \prod_{i=1}^n (a_i +1) \leq  \prod_{i=1}^{n} 2a_i.
	\end{equation}

	\item It holds that
	\[
	q_n \leq (a_n +1) q_{n-1},~~~ 1 \leq \frac{q_{n+k} (a_1, \ldots, a_n, \ldots, a_{n+k} ) }{ q_n(a_1, \ldots, a_n) q_k (a_{n+1}, \ldots, a_{n+k})} \leq 2,
	\]
	\[
	\frac{a_k +1}{2} \leq \frac{q_n(a_1, \ldots, a_n)}{q_{n-1} (a_1, \ldots, a_{k-1}, a_{k+1}, \ldots, a_n)} \leq a_k+1.
	\]
	\item
		$I_n (a_1, \ldots, a_n )= \begin{cases}
	\left[\cfrac{p_n}{q_n}, \cfrac{p_n+p_{n-1}}{q_n +q_{n-1}} \right), ~~\text{if}~ n ~\text{is~ even},\\
			\left(\cfrac{p_n+p_{n-1}}{q_n +q_{n-1}},\cfrac{p_n}{q_n} \right],~~ \text{if}~ n~ \text{is~odd}. \end{cases}$

\noindent The length of $I_n (a_1, \ldots, a_n)$ is given by
\begin{equation}
	\cfrac{1}{2 q_n ^2} \leq | I_n (a_1, \ldots, a_n ) | =\cfrac{1}{q_n (q_n + q_{n-1})} \leq \cfrac{1 }{q_n^2}.
\end{equation}

\end{enumerate}
\end{prop}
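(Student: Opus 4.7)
The overall strategy is induction on $n$ using only the recurrence $q_{n+1}=a_{n+1}q_n+q_{n-1}$ (with $q_0=1$, $q_{-1}=0$) and the determinant identity $p_nq_{n-1}-p_{n-1}q_n=(-1)^n$, which itself is a one-line induction from the same recursion applied to both $p$ and $q$.

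For part (1), the recurrence together with $a_{n+1}\ge 1$ forces $(q_n)$ to be strictly increasing, so $q_{n+1}\ge q_n+q_{n-1}\ge 2q_{n-1}$; iterating this step-$2$ doubling yields $q_n\ge 2^{(n-1)/2}$. Next, $q_{i-2}\ge 0$ gives $q_i\ge a_iq_{i-1}$, while $q_{i-2}\le q_{i-1}$ gives $q_i\le(a_i+1)q_{i-1}$; telescoping from $i=1$ to $n$ (using $q_0=1$) produces both $\prod a_i\le q_n$ and $q_n\le\prod(a_i+1)$, and the final estimate $\prod(a_i+1)\le\prod 2a_i$ reduces to $a_i\ge 1$.

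For part (2), the bound $q_n\le(a_n+1)q_{n-1}$ is the special case already established. For the submultiplicativity I appeal to the matrix representation $\prod_{i=1}^{n}\bigl(\begin{smallmatrix}a_i&1\\1&0\end{smallmatrix}\bigr)$, whose entries encode $q_n(a_1,\ldots,a_n)$ and $q_{n-1}(a_1,\ldots,a_{n-1})$; splitting the product at index $n$ yields the classical identity
\[
q_{n+k}(a_1,\ldots,a_{n+k})=q_n(a_1,\ldots,a_n)\,q_k(a_{n+1},\ldots,a_{n+k})+q_{n-1}(a_1,\ldots,a_{n-1})\,q_{k-1}(a_{n+2},\ldots,a_{n+k}).
\]
The lower bound is immediate, and the upper bound uses $q_{n-1}\le q_n$ together with $q_{k-1}(\ldots)\le q_k(\ldots)$. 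The deletion inequality for $a_k$ follows from the same splitting idea: decomposing both $q_n(a_1,\ldots,a_n)$ and $q_{n-1}(a_1,\ldots,a_{k-1},a_{k+1},\ldots,a_n)$ in terms of the $q$-values of the two non-overlapping blocks, then expanding the middle factor via $q_{n-k+1}(a_k,a_{k+1},\ldots,a_n)=a_kq_{n-k}(a_{k+1},\ldots,a_n)+q_{n-k-1}(a_{k+2},\ldots,a_n)$ isolates an $a_k$-term whose coefficient is precisely the denominator, pinching the ratio between $a_k+1$ and $(a_k+1)/2$.

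For part (3), note that $x\in I_n(a_1,\ldots,a_n)$ if and only if $T^nx\in[0,1)$ with the prescribed initial partial quotients; inverting the first $n$ steps of the Gauss map gives the Möbius parametrization $x=\phi_n(y):=(p_n+yp_{n-1})/(q_n+yq_{n-1})$ for $y\in[0,1)$. The endpoints are $\phi_n(0)=p_n/q_n$ and $\phi_n(1)=(p_n+p_{n-1})/(q_n+q_{n-1})$, and the sign of $\phi_n'$ equals $(-1)^n$ by the determinant identity, which is exactly what dictates the parity-dependent ordering and the open/closed distinction at the endpoints. The length is then
\[
|I_n(a_1,\ldots,a_n)|=\bigg|\frac{p_n}{q_n}-\frac{p_n+p_{n-1}}{q_n+q_{n-1}}\bigg|=\frac{|p_nq_{n-1}-p_{n-1}q_n|}{q_n(q_n+q_{n-1})}=\frac{1}{q_n(q_n+q_{n-1})},
\]
and $q_n\le q_n+q_{n-1}\le 2q_n$ yields the stated two-sided bound. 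The only real obstacle throughout is bookkeeping: getting the matrix identity in part (2) stated with the correct indices, and carefully handling the parity-driven case split in part (3); no deep idea beyond the recurrence and the determinant identity is required.
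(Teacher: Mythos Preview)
Your argument is correct and is the standard textbook derivation of these facts. The paper, however, does not prove this proposition at all: it simply states the results and refers the reader to the standard texts of Iosifescu--Kraaikamp and Khintchine, so there is no ``paper's proof'' to compare against beyond those references, whose arguments are essentially the ones you give. One small remark: your sketch for the deletion inequality in part~(2) is a bit compressed---the claim that the coefficient of the isolated $a_k$-term is ``precisely the denominator'' is not literally true, and a careful write-up requires comparing two continuant splittings termwise---but the approach is the right one and the details are routine.
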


The next proposition describes the positions of cylinders $I_{n+1}$ of level $n+1$ inside the $n$th level cylinder $I_n$.

\begin{prop} \label{C-D}
Let $I_n =I_n (a_1, \ldots, a_n)$ be {\color{red}an} $n$th level cylinder, which is partitioned into sub-cylinders $\{I_{n+1}(a_1, \ldots, a_n, a_{n+1}): a_{n+1} \in \mathbb{N}\}$. When $n$ is odd, these sub-cylinders are positioned from left to right, as $a_{n+1}$ increases from $1$ to $\infty$; when $n$ is even, they are positioned from right to left.
\end{prop}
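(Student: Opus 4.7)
The plan is to reduce the statement to a monotonicity computation involving a single real parameter. By Proposition 2.1(3), once $a_1,\ldots,a_n$ are fixed, the cylinder $I_{n+1}(a_1,\ldots,a_n,a_{n+1})$ has as its two endpoints the rationals $p_{n+1}/q_{n+1}$ and $(p_{n+1}+p_n)/(q_{n+1}+q_n)$, where $p_{n+1}=a_{n+1}p_n+p_{n-1}$ and $q_{n+1}=a_{n+1}q_n+q_{n-1}$. Thus both endpoints can be written as M\"obius-type expressions in the single variable $a_{n+1}$, with the coefficients $p_n,q_n,p_{n-1},q_{n-1}$ (and $p_n+p_{n-1}/\cdot$ shifts) held constant. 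The entire proposition is then a statement about how these two expressions move as $a_{n+1}$ traverses $\mathbb N$.

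First I would fix $a_1,\ldots,a_n$ and extend $a_{n+1}$ to a real variable $t\ge 1$, setting
\[
\varphi(t)=\frac{tp_n+p_{n-1}}{tq_n+q_{n-1}},\qquad \psi(t)=\frac{(t+1)p_n+p_{n-1}}{(t+1)q_n+q_{n-1}}.
\]
A direct differentiation together with the standard identity $p_nq_{n-1}-p_{n-1}q_n=(-1)^{n-1}$ gives
\[
\varphi'(t)=\frac{(-1)^{n+1}}{(tq_n+q_{n-1})^2},
\]
and an analogous formula for $\psi'(t)$. Hence, for odd $n$ both $\varphi$ and $\psi$ are strictly increasing on $[1,\infty)$, while for even $n$ both are strictly decreasing. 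Since the endpoints of the sub-cylinder $I_{n+1}(a_1,\ldots,a_n,a_{n+1})$ are $\varphi(a_{n+1})$ and $\psi(a_{n+1})$, their common monotonic motion in $a_{n+1}$ already gives the announced left-to-right ordering for odd $n$ and right-to-left for even $n$.

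To finish, I would check that the sub-cylinders actually tile $I_n$ in the announced order without gaps or overlaps. This follows from two observations: the right endpoint of the $a_{n+1}$-th sub-cylinder coincides with the left endpoint of the $(a_{n+1}+1)$-th (up to the direction dictated by the parity of $n$), because $\psi(a_{n+1})=\varphi(a_{n+1}+1)$ by definition; and as $a_{n+1}\to\infty$ the common limit of $\varphi$ and $\psi$ is $p_n/q_n$, which is the appropriate endpoint of $I_n$ identified in Proposition 2.1(3), while at $a_{n+1}=1$ the outer endpoint $\psi(1)=(2p_n+p_{n-1})/(2q_n+q_{n-1})$ lies on the far side, adjacent to the other endpoint $(p_n+p_{n-1})/(q_n+q_{n-1})$ of $I_n$.

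No serious obstacle is expected; the only care needed is to keep the half-open/closed sides of the cylinders consistent with Proposition 2.1(3), so that the tiling is exact and the chain of sub-cylinders really sweeps out $I_n$ in the direction announced. Everything else reduces to the sign of $(-1)^{n+1}$ in the derivative formula.
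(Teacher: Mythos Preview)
Your argument is correct: computing $\varphi'(t)=(-1)^{n+1}/(tq_n+q_{n-1})^2$ via the determinant identity $p_nq_{n-1}-p_{n-1}q_n=(-1)^{n-1}$ immediately gives the monotone motion of both endpoints in the direction dictated by the parity of $n$, and the adjacency $\psi(a_{n+1})=\varphi(a_{n+1}+1)$ shows the sub-cylinders abut. One small clean-up: at $a_{n+1}=1$ it is actually $\varphi(1)=(p_n+p_{n-1})/(q_n+q_{n-1})$ that coincides with the appropriate endpoint of $I_n$ (not $\psi(1)$, which is already one step in), so the first sub-cylinder starts exactly at the boundary of $I_n$ rather than merely ``adjacent'' to it.

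As for comparison with the paper: there is nothing to compare. The paper states Proposition~\ref{C-D} without proof, treating it as one of the standard facts ``explained in the standard texts'' cited just before Proposition~2.1. Your derivative computation is the natural way to justify it and would serve perfectly well as the omitted argument.
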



\subsection{Pressure function}\label{ss:pressure}

Pressure function is an appropriate tool in dealing with dimension problems in infinite conformal iterated function systems.
We {\color{red}recall} that the pressure function with a continuous potential can be approximated by the pressure functions restricted to the sub-systems in continued fractions. For more information on pressure functions, we refer the readers to \cite{HMU2002,  MU1996, MU1999}.

Let ${\mathbb A}$ be a finite or infinite subset of $\mathbb{N}$, and define
\begin{equation*}
	X_{\mathbb A}= \{ x \in [0,1): a_n (x) \in {\mathbb A}~ \text{for~all}~n \geq 1 \}.
\end{equation*}
Then, with the Gauss map $T$ restricted to it,  $X_{\mathbb A}$ forms a dynamical system.
 The pressure function restricted to this sub-system $(X_{\mathbb A}, T)$ with potential $\phi : [0,1) \to \mathbb{R}$ is defined as
 \begin{equation}\label{def:pressure}
 	P_{\mathbb A} (T, \phi) = \lim_{n \to \infty} \cfrac{1}{n} \log \sum_{(a_1, \ldots, a_n ) \in {\mathbb A}^n } \sup_{x \in X_{\mathbb A}} e^{S_n \phi([a_1, \ldots, a_n +x])},
 \end{equation}
where $S_n \phi (x) $ denotes the ergodic sum $\phi(x)+\cdots+\phi(T^{n-1} x)$.
When ${\mathbb A}=\mathbb{N}$, we write $P(T, \phi)$ for $P_{\mathbb{N}} (T, \phi)$.

The $n$th variation $Var_n(\phi)$ of $\phi$ is defined as
\begin{equation*}
	Var_n (\phi) := \sup \big\{ |\phi(x)-\phi(y)|:  I_n (x) =I_n (y)\big\}.
\end{equation*}

The existence of the limit in equation \eqref{def:pressure} is due to the following result.

\begin{prop}[{\cite{LWWX2014} and \cite{MU1996}}]
Let $\phi : [0,1) \to \mathbb{R}$ be a real function with $Var_1(\phi) < \infty$ and $Var_n(\phi) \to 0$ as $n \to \infty$.
Then the limit defining $P_{\mathbb A}(T, \phi )$ in \eqref{def:pressure} exists and the value of $P_{\mathbb A}(T,\phi)$ remains the same even without taking the supremum over $x \in X_{\mathbb A}$ in \eqref{def:pressure}.
\end{prop}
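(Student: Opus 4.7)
Write $Z_n:=\sum_{(a_1,\ldots,a_n)\in\mathbb A^n}\sup_{x\in X_{\mathbb A}}\exp\bigl(S_n\phi([a_1,\ldots,a_n+x])\bigr)$, so that the quantity under study is $n^{-1}\log Z_n$. The plan is: first, establish submultiplicativity $Z_{m+n}\le Z_mZ_n$ using only the product structure of cylinders under the Gauss map, and apply Fekete's lemma to define $P_{\mathbb A}(T,\phi)$ as the resulting limit; second, use the decay of $Var_n(\phi)$ to show that replacing the supremum by evaluation at any single point $x\in X_{\mathbb A}$ changes each summand by at most a subexponential factor.

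For submultiplicativity, fix $(a_1,\ldots,a_{m+n})\in\mathbb A^{m+n}$ and $y\in X_{\mathbb A}$, and split $S_{m+n}\phi=S_m\phi+S_n\phi\circ T^m$. Setting $w:=[a_{m+1},\ldots,a_{m+n}+y]\in X_{\mathbb A}$ gives the identity $[a_1,\ldots,a_{m+n}+y]=[a_1,\ldots,a_m+w]$ and $T^m[a_1,\ldots,a_{m+n}+y]=w$, so $S_m\phi([a_1,\ldots,a_{m+n}+y])\le\sup_{z\in X_{\mathbb A}}S_m\phi([a_1,\ldots,a_m+z])$ and $S_n\phi(w)\le\sup_{y'\in X_{\mathbb A}}S_n\phi([a_{m+1},\ldots,a_{m+n}+y'])$. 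Exponentiating, taking $\sup_y$ on the left, and summing over $(a_1,\ldots,a_{m+n})$ factorises the right side as $Z_m Z_n$. Fekete's lemma then yields $\lim_n n^{-1}\log Z_n=\inf_n n^{-1}\log Z_n\in[-\infty,\infty]$, proving existence of the pressure.

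For the supremum-free statement, let $\tilde Z_n(x):=\sum_{(a_1,\ldots,a_n)\in\mathbb A^n}\exp\bigl(S_n\phi([a_1,\ldots,a_n+x])\bigr)$ for $x\in X_{\mathbb A}$. For any $x,y\in X_{\mathbb A}$ and any $1\le k\le n$, the points $T^{k-1}[a_1,\ldots,a_n+x]=[a_k,\ldots,a_n+x]$ and $T^{k-1}[a_1,\ldots,a_n+y]=[a_k,\ldots,a_n+y]$ both lie in the common cylinder $I_{n-k+1}(a_k,\ldots,a_n)$, so they differ in $\phi$-value by at most $Var_{n-k+1}(\phi)$. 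Summing over $k$ gives $|S_n\phi([a_1,\ldots,a_n+x])-S_n\phi([a_1,\ldots,a_n+y])|\le V_n:=\sum_{j=1}^n Var_j(\phi)$, whence $\tilde Z_n(x)\le Z_n\le e^{V_n}\tilde Z_n(x)$. Since $Var_1(\phi)<\infty$ and $Var_j(\phi)\to 0$, Cesaro gives $V_n/n\to 0$, so $n^{-1}\log\tilde Z_n(x)$ shares the same limit as $n^{-1}\log Z_n$, independently of $x\in X_{\mathbb A}$. The main obstacle to executing this plan is notational rather than conceptual: one must confirm the combinatorial identity $[a_1,\ldots,a_{m+n}+y]=[a_1,\ldots,a_m+[a_{m+1},\ldots,a_{m+n}+y]]$ under the convention adopted for $[a_1,\ldots,a_n+x]$ and check that the variation estimate telescopes correctly along the orbit under $T^{k-1}$; once this is done, the hypothesis $Var_n(\phi)\to 0$ (together with $Var_1(\phi)<\infty$, which keeps $V_n$ finite) is precisely what closes the argument.
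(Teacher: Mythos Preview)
The paper does not supply its own proof of this proposition; it is quoted verbatim from the cited references \cite{LWWX2014} and \cite{MU1996}. Your sketch is precisely the standard argument found there: submultiplicativity of the partition function $Z_n$ via the cocycle identity $S_{m+n}\phi=S_m\phi+S_n\phi\circ T^m$ together with Fekete's lemma, followed by the telescoping variation bound $|S_n\phi([a_1,\dots,a_n+x])-S_n\phi([a_1,\dots,a_n+y])|\le\sum_{j=1}^n Var_j(\phi)=o(n)$ to eliminate the supremum. Both steps are correct as stated, and the combinatorial identity $[a_1,\dots,a_{m+n}+y]=[a_1,\dots,a_m+[a_{m+1},\dots,a_{m+n}+y]]$ you flag as the only obstacle is immediate from the definition of the continued fraction map.

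One small caveat: Fekete's lemma yields $\lim_n n^{-1}\log Z_n=\inf_n n^{-1}\log Z_n\in[-\infty,\infty)$ only when each $Z_n$ is finite. If $Z_{n_0}=\infty$ for some $n_0$ (which can occur when $\mathbb A$ is infinite), one needs a separate one-line argument---using the same variation bound to get $\tilde Z_{m+n}(x)\ge e^{-V_m}Z_m\,\tilde Z_n(x)$---to see that then $Z_n=\infty$ for all $n\ge n_0$, so the limit is $+\infty$. This is routine and does not affect the correctness of your plan.
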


The following proposition states a continuity property of the pressure function when the continued fraction system $( [0,1), T) $ is approximated by its sub-systems $(X_{\mathbb A}, T)$.

\begin{prop}[{\cite[Proposition 2]{HMU2002}}]\label{p:property of s}
Let $\phi : [0,1) \to \mathbb{R}$ be a real function with $Var_1(\phi) < \infty $ and $Var_n(\phi) \to 0$ as $n \to \infty$.
 We have
 \begin{equation}
 	P(T, \phi) = P_{\mathbb{N}} (T, \phi)=\sup \{ P_{\mathbb A}(T, \phi) : {\mathbb A}~\text{is~a~finite~subset~of}~ \mathbb{N}\}.
 \end{equation}
\end{prop}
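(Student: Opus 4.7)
The inequality $\sup_{\mathbb{A}} P_{\mathbb{A}}(T, \phi) \le P(T, \phi)$ is the easy direction: for any finite $\mathbb{A} \subset \mathbb{N}$, the $n$th partition sum defining $P_{\mathbb{A}}(T,\phi)$ is a subsum of the one defining $P_{\mathbb{N}}(T,\phi)$, so $P_{\mathbb{A}}(T, \phi) \le P_{\mathbb{N}}(T, \phi) = P(T, \phi)$ after passing to the limit. The content of the proposition is therefore the reverse inequality, and the plan is to show, for each $\epsilon > 0$, the existence of a finite alphabet $\mathbb{A}_N := \{1, 2, \ldots, N\}$ with $P_{\mathbb{A}_N}(T, \phi) \ge P(T, \phi) - \epsilon$. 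The degenerate case $P(T,\phi) = +\infty$ is handled separately by showing $P_{\mathbb{A}_N}(T,\phi) \to \infty$, which reduces to the main argument.

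Write $Z_n(\mathbb{B}) := \sum_{(a_1,\ldots,a_n) \in \mathbb{B}^n} \sup_{x} e^{S_n \phi([a_1,\ldots,a_n+x])}$; the crux is to control the ratio $Z_n(\mathbb{N})/Z_n(\mathbb{A}_N)$ sub-exponentially in $n$ once $N$ is large. The first step is to use both variation hypotheses to \emph{factorize} the sup of the ergodic sum. Since $T^{j-1}[a_1,\ldots,a_n+x] = [a_j,\ldots,a_n+x]$ lies in $I_{n-j+1}(a_j,\ldots,a_n)$, the value of $\phi \circ T^{j-1}$ there varies by at most $Var_{n-j+1}(\phi)$, hence for any representatives $y_j \in I_{n-j+1}(a_j,\ldots,a_n)$,
$$\sup_{x} e^{S_n \phi([a_1,\ldots,a_n+x])} \le \exp\Bigl(\sum_{k=1}^n Var_k(\phi)\Bigr) \prod_{j=1}^n e^{\phi(y_j)}.$$
Because $Var_k(\phi) \le Var_1(\phi) < \infty$ and $Var_k(\phi) \to 0$, the Cesàro mean theorem forces $\sum_{k=1}^n Var_k(\phi) = o(n)$, so the multiplicative prefactor is $e^{o(n)}$. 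The second step is to split the summation defining $Z_n(\mathbb{N})$ according to the set of indices $j$ at which $a_j > N$. Letting $\tau_N := \sum_{a > N} \sup_{x} e^{\phi([a+x])}$, which is finite whenever $P(T,\phi) < \infty$ and satisfies $\tau_N \to 0$, this decomposition yields a binomial estimate
$$Z_n(\mathbb{N}) \le e^{o(n)}\sum_{k=0}^n \binom{n}{k} \tau_N^{\,k}\, Z_{n-k}(\mathbb{A}_N) \le e^{o(n)}(1+C\tau_N)^n Z_n(\mathbb{A}_N)$$
for a constant $C$ depending only on $\phi$. Taking $\tfrac{1}{n}\log$ and letting $n \to \infty$ then $N \to \infty$ gives $P(T,\phi) \le P_{\mathbb{A}_N}(T,\phi) + \log(1+C\tau_N) + o(1)$, which is exactly what is needed.

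The main obstacle, and the place where the variation hypotheses are genuinely required, is the factorization in the first step: the sup of an ergodic sum on a cylinder does not split multiplicatively across positions, since $\phi \circ T^{j-1}$ restricted to $I_n(a_1,\ldots,a_n)$ depends on the tail digits $a_j, a_{j+1}, \ldots, a_n$ all at once. The conditions $Var_1(\phi) < \infty$ and $Var_k(\phi) \to 0$ are precisely what let one absorb this non-local dependence into a sub-exponential multiplicative error and then decouple the digits. Everything after the factorization is a soft tail estimate driven by $\tau_1 < \infty$, together with monotonicity of $P_{\mathbb{A}_N}(T,\phi)$ in $N$ to pass from the specific exhausting sequence $\{\mathbb{A}_N\}$ to the supremum over all finite $\mathbb{A} \subset \mathbb{N}$.
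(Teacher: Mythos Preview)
The paper does not give its own proof of this proposition: it is quoted verbatim from \cite[Proposition 2]{HMU2002} and used as a black box, so there is nothing in the paper to compare against. Your sketch is therefore being assessed on its own merits.

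Your outline is the standard route to this approximation property and is essentially correct. Two points deserve tightening. First, in the binomial step you write
\[
Z_n(\mathbb{N}) \le e^{o(n)}\sum_{k=0}^n \binom{n}{k} \tau_N^{\,k}\, Z_{n-k}(\mathbb{A}_N),
\]
but after your factorization the contribution of the positions with $a_j\le N$ is $\sigma_N^{\,n-k}$ with $\sigma_N=\sum_{a\le N}\sup_x e^{\phi([a+x])}$, not $Z_{n-k}(\mathbb{A}_N)$; you then need the \emph{reverse} factorization $Z_{n-k}(\mathbb{A}_N)\ge e^{-o(n)}\sigma_N^{\,n-k}$ (which follows from the same variation estimate with the opposite sign) to land on $(1+C\tau_N)^n Z_n(\mathbb{A}_N)$. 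This is a harmless rewrite, but as written the intermediate line does not follow directly. Second, the claim ``$\tau_N<\infty$ whenever $P(T,\phi)<\infty$'' amounts to $Z_1(\mathbb{N})<\infty$; this is true, and the quick justification is that $Var_1(\phi)<\infty$ makes $\phi$ bounded on each $I_1(a)$, so fixing $a_2=\cdots=a_n=1$ and summing over $a_1$ shows $Z_1(\mathbb{N})=\infty$ would force $Z_n(\mathbb{N})=\infty$ for all $n$. With these two clarifications your argument goes through.
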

The potential functions related to the dimension of $E_2(\{z_n\}_{n\ge 1}, B)$ will be taken as the following forms:
\begin{enumerate}
 	\item $\phi_1(x)=-s\log|T'(x)|-s^2\log B$ corresponding to the pre-dimensional number $s_{n,1}$ (see \eqref{pressure1});
	\item $\phi_2(x)=-s\log|T'(x)|-s\log B+(1-s)\alpha$ for some $\alpha>0$ corresponding to the pre-dimensional number $s_{n,2}$ (see \eqref{pressure2});
	\item $\phi_3 (x)=-s\log|T'(x)|-(s/2)\log B-s\beta$ for some $\beta \ge 0$ corresponding to the pre-dimensional number $s_{n,3}$ (see \eqref{pressure3}).
\end{enumerate}

%
%
%
%

\subsection{Basic facts}

Some lemmas will be established in this subsection for future use. The first one involves a summation that naturally appearing in the proof of the upper bound of $E_2(\{z_n\}_{n\ge 1},B)$.

%
%

\begin{lem}\label{l:summation2}
	For any $a\in\N^{+}$ and $t>1/2$, the following holds
	\begin{equation}
\sum_{b \in \N^{+}\setminus\{a\}}\frac{a^t }{b^t|a-b|^t}
	\asymp a^{1-t}.
	\end{equation}
\end{lem}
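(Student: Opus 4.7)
The plan is to split the summation domain $\N^{+}\setminus\{a\}$ into three natural ranges according to the size of $b$ relative to $a$, so that in each range one of the three factors $a^t$, $b^t$, $|a-b|^t$ becomes comparable to a single power of $a$ or $b$ and the sum reduces to a standard one. Concretely, I will write
\[
\sum_{b\ne a}\frac{a^{t}}{b^{t}|a-b|^{t}}\;=\;S_{1}+S_{2}+S_{3},
\]
where $S_{1}$ sums over $1\le b\le a/2$, $S_{2}$ over $a/2<b<2a$ with $b\ne a$, and $S_{3}$ over $b\ge 2a$.

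On the range of $S_{1}$ one has $|a-b|=a-b\asymp a$, so the summand is $\asymp b^{-t}$ and integral comparison gives $S_{1}\asymp\sum_{b\le a/2}b^{-t}\asymp a^{1-t}$. On the range of $S_{2}$ one has $b\asymp a$, so the summand is $\asymp |a-b|^{-t}$; parametrizing by $k=|a-b|$ (each integer $k\in\{1,\dots,a-1\}$ arising from at most two values of $b$) gives $S_{2}\asymp\sum_{k=1}^{a-1}k^{-t}\asymp a^{1-t}$. On the range of $S_{3}$ one has $|a-b|=b-a\asymp b$, so the summand is $\asymp a^{t}b^{-2t}$; since $t>1/2$ the tail $\sum_{b\ge 2a}b^{-2t}$ is convergent and comparable to $a^{1-2t}$ by integral comparison, whence $S_{3}\asymp a^{t}\cdot a^{1-2t}=a^{1-t}$.

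Summing the three contributions yields the claimed asymptotic $\asymp a^{1-t}$; the lower bound already follows from any one of $S_{1},S_{2},S_{3}$ alone. There is no real obstacle: the argument is routine bookkeeping with integral comparisons, and the hypothesis $t>1/2$ is used essentially only to ensure that the tail sum in $S_{3}$ converges with the correct power of $a$. Small-$a$ boundary cases (such as $a=1$ or $a=2$, where one or two of the ranges are empty) can be dispatched by direct inspection of the remaining sums.
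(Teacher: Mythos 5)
Your proof is correct and takes essentially the same approach as the paper: the paper splits $\N\setminus\{a\}$ into four blocks (below $a/2$, between $a/2$ and $a$, between $a$ and $2a$, above $2a$) and applies the same local comparisons $|a-b|\asymp a$, $b\asymp a$, and $|a-b|\asymp b$ respectively; you merely merge the two middle blocks into a single range $S_2$ and reparametrize by $k=|a-b|$, which is a cosmetic simplification. The key estimates, the use of $t>1/2$ only to control the tail, and the observation that the lower bound comes for free are all the same.
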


\begin{proof}
We are going to decompose $\{b\in\N: b\ne a\}$ appearing in the left summation into four blocks, and in each block using a particular estimate. Write
	\[\{b\in\N:b\ne a\}=A+B+C+D,\]
	where
	\begin{align*}
		A&=\{b\in\N:1\le b\le a/2\},& B&=\{b\in\N:a/2\le b<a\},\\
		C&=\{b\in\N:a< b\le 2a\},& D&=\{b\in\N:b>2a\}.
	\end{align*}

For the block $A$, since $1 \leq b \leq a/2$ for any $b\in A$, one has $a/2 \le |a-b| < a$. Thus,
	\[\sum_{b\in A}\frac{a^t }{b^t|a-b|^t}
 \asymp \sum_{b\in A}\frac{1 }{b^t}\asymp \int_1^{a/2} \frac{1}{x^t }  dx\asymp a^{1-t}.\]

For the block $B$, using $ a/2\leq b< a$ and both $a$ and $b$ are integers, we have
	\[	\sum_{b\in B}\frac{a^t }{b^t|a-b|^t}
	\asymp \sum_{b\in B}\frac{1 }{|a-b|^t}=\sum_{1\le c\le a/2}\frac{1}{c^t}\asymp a^{1-t}.\]

	The estimation for the block $C$ is similar to $B$, we omit the details.

For the {\color{red}block} $D$, since $b>2a$ for any $b\in D$, it follows that $ b/2 \leq   |a-b|< b$. So, by the condition $t>1/2$,
	\[\sum_{b\in D}\frac{a^t }{b^t|a-b|^t}
\asymp a^t\cdot\sum_{b\in D}\frac{1 }{b^{2t}}\asymp a^t\cdot\int_{2a}^{\infty} \frac{1}{x^{2t}}dx\asymp a^{1-t}.\]
Combine the above four estimations,  and the proof is completed.
\end{proof}

Next, we explore some properties of the pre-dimension numbers $s_{n,i}$ and their relationship to $a_1(z_n)$.
Recall the definitions of $s_{n,i}$ in \eqref{eq:sni}.

\begin{lem}\label{l:>1/2}
	Let $n\in \N$ and $f(s)=\sum_{a_1,\dots,a_n\in\N}q_n(a_1,\dots,a_n)^{-2s}$. We have
	\[f(s)<\infty \Longleftrightarrow s>1/2.\]
	Moreover, $f(s)$ is continuous on $s>1/2$ and goes to infinity as $s\downarrow 1/2$. Consequently,
	\begin{enumerate}
		\item $s_{n,i}>1/2\quad\text{for $i=1,2,3$}.$
		\item $s_{n,1}$ satisfies
		\[\sum_{a_1,\dots,a_n\in\N}\frac{1}{q_n(a_1,\dots,a_n)^{2s_{n,1}}B^{ns_{n,1}^2}}=1.\]
		Similar arguments apply to $s_{n,2}$ and $s_{n,3}$, with the summations being replaced in the obvious way according to their definitions.
	\end{enumerate}
\end{lem}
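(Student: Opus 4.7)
The plan is to establish the convergence dichotomy for $f(s)$ first, then read off continuity and the behaviour at $s = 1/2$, and finally deduce both corollaries from these.

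I would prove $f(s) < \infty \iff s > 1/2$ by induction on $n$. The base case $n = 1$ gives $f(s) = \sum_{a \in \N} a^{-2s}$, a classical Dirichlet series convergent precisely for $s > 1/2$. For the inductive step, Proposition \ref{p:basic property}(2) yields
\[
a_n q_{n-1} \le q_n \le (a_n + 1) q_{n-1} \le 2 a_n q_{n-1},
\]
so
\[
\sum_{a_1,\dots,a_n \in \N} q_n(a_1,\dots,a_n)^{-2s} \asymp \Big( \sum_{a_1,\dots,a_{n-1} \in \N} q_{n-1}(a_1,\dots,a_{n-1})^{-2s} \Big) \Big( \sum_{a \in \N} a^{-2s} \Big),
\]
with the implicit constant depending only on $s$ on compact subsets, and both factors are finite iff $s > 1/2$ by the inductive hypothesis and the base case.

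Continuity on $(1/2, \infty)$ then follows from the Weierstrass M-test: fixing any $s_0 > 1/2$, on $[s_0, \infty)$ each term $q_n^{-2s}$ is dominated by the summable $q_n^{-2s_0}$, giving uniform convergence and hence continuity. Monotonicity of $f$ in $s$, combined with the divergence of $f$ at $s = 1/2$ established above, yields $f(s) \uparrow \infty$ as $s \downarrow 1/2$ via monotone convergence applied to the partial sums.

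For the two corollaries, I would observe that each of the three series in \eqref{eq:sni} factors as $f(s)$ multiplied by a strictly positive continuous function of $s$ on $[1/2, 1]$, namely $B^{-ns^2}$, $a_1(z_n)^{1-s} B^{-ns}$, and $a_1(z_n)^{-s} B^{-ns/2}$ respectively (under the standing assumption $a_1(z_n) < \infty$). Since $f(s) \to \infty$ as $s \downarrow 1/2$, each weighted series likewise diverges at $1/2$, forcing the infimum $s_{n,i}$ to lie strictly above $1/2$, which gives (1). For (2), since $B > 1$, $q_n \ge 1$, and $a_1(z_n) \ge 1$, each summand in the three series is a continuous, strictly decreasing function of $s$ (the $B$-dependent factor providing the strict monotonicity); transporting this to the full series via the same uniform convergence, each weighted sum is continuous and strictly decreasing on $(1/2, \infty)$, and hence crosses the value $1$ at a unique point, which must equal $s_{n,i}$, yielding the asserted equality. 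The only genuine obstacle I anticipate is ruling out the degenerate possibility that the relevant series never drops below $1$ on $[1/2, 1]$; this is handled by the uniform bound $\sum q_n(a_1,\dots,a_n)^{-2} \le 2$ (from $|I_n| \ge 1/(2q_n^2)$ in Proposition \ref{p:basic property}(3) and the fact that cylinders partition $[0,1)$), which forces the sum at $s = 1$ to be $O(B^{-n})$ and hence strictly below $1$ in the generic regime.
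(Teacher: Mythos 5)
Your proof is correct and reaches all the stated conclusions, but it takes a somewhat different technical route from the paper's. For the convergence dichotomy, the paper applies Proposition~\ref{p:basic property}(1) directly to get $q_n \asymp \prod_i a_i$, hence $f(s) \asymp \bigl(\sum_{a} a^{-2s}\bigr)^n$, while you reach the same product formula by induction on $n$ via the recursion $a_n q_{n-1} \le q_n \le 2 a_n q_{n-1}$; these are two presentations of the same estimate. The more substantive divergence is in the continuity argument: the paper invokes H\"older's inequality to conclude that $\log f$ is convex on $(1/2,\infty)$ and therefore continuous, whereas you use the Weierstrass $M$-test to get local uniform convergence of the series defining $f$, which is more elementary and arguably cleaner since it avoids the convexity detour. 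Both methods are valid; the convexity route is slightly more informative (it would also give one-sided differentiability for free). For items (1) and (2) the paper says only that they ``follow directly''; your argument supplies the missing detail (factoring each weighted series as $f(s)$ times a positive continuous multiplier, then using strict monotonicity and divergence at $1/2^+$ to locate the unique root). The only caveat you flag --- that the weighted sum at $s=1$ might not drop below $1$ for very small $n$ --- is real but harmless: the lemma is applied only along subsequences $n\to\infty$, and your bound $\sum q_n^{-2} \le 2$ shows the sum at $s=1$ is $\le 2B^{-n}<1$ once $B^n>2$. (The paper glosses over this same point, as well as the convention-set value $s_{n,3}=0$ when $z_n=0$, which formally contradicts item (1) but again never arises in the application.)
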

\begin{proof}
	The proofs follow the ideas in \cite[Lemma 2.6]{WW2008} and \cite[Lemma 3.2]{HMU2002}.

	By Proposition \ref{p:basic property} (1), for any $s>0$
	\begin{align}
		f(s)&=\sum_{a_1,\dots,a_n\in\N}q_n(a_1,\dots,a_n)^{-2s}\asymp \sum_{a_1,\dots,a_n\in\N}\prod_{i=1}^{n}a_i^{-2s}=\bigg(\sum_{a\in\N} a^{-2s}\bigg)^n\notag\\
		&\asymp\bigg(\int_1^\infty x^{-2s}dx\bigg)^n=(1-2s)^{-n},\label{eq:fs}
	\end{align}
	which clearly implies the first point of the lemma.

	By H\"older inequality, $\log f(s)$ is convex on $s>1/2$. Hence, $\log f(s)$ is continuous on $s>1/2$, so is $f(s)$. By \eqref{eq:fs}, it is easily seen that $f(s)$ goes to infinity as $s\downarrow 1/2$.

	Items (1) and (2) follows from the continuity of $f(s)$ and the definitions of $s_{n,i}$ ($i=1,2,3$) directly.
\end{proof}


\begin{lem}\label{l:a1(zn)>}
Let $s_{n,i}$ be as in \eqref{eq:sni}, $i=1,2,3$. The following statements hold:
\begin{enumerate}
	\item If $s_{n,1}\le s_{n,2}$, then $a_1(z_n)\ge B^{ns_{n,1}}$.
	\item If $s_{n,1}>s_{n,2}$, then $a_1(z_n)<B^{ns_{n,2}}$.
	\item If $s_{n,2}<s_{n,3}$, then $a_1(z_n)<B^{ns_{n,3}/2}$.
		\item If $s_{n,2}\ge s_{n,3}$, then $a_1(z_n)\ge B^{ns_{n,2}/2}$.
\end{enumerate}
\end{lem}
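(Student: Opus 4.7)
The plan is to reduce every part of the lemma to a short monotonicity argument based on two elementary algebraic identities between the three sums
\[
F_i(s) := \text{the summation in the defining inequality of } s_{n,i},\quad i=1,2,3.
\]
A direct inspection of the summands gives
\[
F_2(s)=F_1(s)\cdot\Bigl(\frac{a_1(z_n)}{B^{ns}}\Bigr)^{1-s},\qquad F_3(s)=F_2(s)\cdot\frac{B^{ns/2}}{a_1(z_n)}.
\]
From Lemma \ref{l:>1/2} we have $s_{n,i}>1/2$ and $F_i(s_{n,i})=1$; moreover, since $q_n,a_1(z_n),B\ge 1$, each $F_i$ is strictly decreasing in $s$. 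The case $z_n=0$ is dealt with separately via the convention $a_1(0)=+\infty$, $s_{n,2}=1$, $s_{n,3}=0$, under which all four statements hold vacuously or trivially.

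For parts (1) and (2) I would evaluate the first identity at $s=s_{n,1}$ and $s=s_{n,2}$ respectively. For (1), the identity at $s=s_{n,1}$ gives $F_2(s_{n,1})=(a_1(z_n)/B^{ns_{n,1}})^{1-s_{n,1}}$; since $s_{n,1}\le s_{n,2}$ forces $F_2(s_{n,1})\ge F_2(s_{n,2})=1$, the exponentiated quantity exceeds $1$, yielding $a_1(z_n)\ge B^{ns_{n,1}}$. For (2), the identity at $s=s_{n,2}$ reads $1=F_1(s_{n,2})\cdot(a_1(z_n)/B^{ns_{n,2}})^{1-s_{n,2}}$, and $s_{n,1}>s_{n,2}$ implies $F_1(s_{n,2})>F_1(s_{n,1})=1$, hence $a_1(z_n)<B^{ns_{n,2}}$.

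Parts (3) and (4) follow in exactly the same spirit from the second identity. For (3), plug in $s=s_{n,3}$ to obtain $1=F_2(s_{n,3})\cdot B^{ns_{n,3}/2}/a_1(z_n)$, and then use $F_2(s_{n,3})<F_2(s_{n,2})=1$ (the hypothesis $s_{n,2}<s_{n,3}$ plus monotonicity) to conclude $a_1(z_n)<B^{ns_{n,3}/2}$. For (4), plug in $s=s_{n,2}$ to get $F_3(s_{n,2})=B^{ns_{n,2}/2}/a_1(z_n)$, and combine with $F_3(s_{n,2})\le F_3(s_{n,3})=1$ (hypothesis $s_{n,2}\ge s_{n,3}$) to recover $a_1(z_n)\ge B^{ns_{n,2}/2}$. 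I do not expect any substantial obstacle; the only subtlety is the boundary scenario $s_{n,i}=1$, where the exponent $1-s_{n,i}$ in the first identity degenerates. This case is ruled out, or handled directly, by observing that $F_i(1)$ is bounded above by a constant multiple of $B^{-n}$ (using the disjointness bound $\sum 1/q_n^2\le 2$ from Proposition \ref{p:basic property}(3)), so $F_i(1)<1$ whenever $n$ is not pathologically small, and the pathological cases can be absorbed into the final $\limsup$ in Theorem \ref{t:main}.
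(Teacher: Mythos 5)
Your proof is correct and follows essentially the same route as the paper: both exploit the fact that, at a fixed $s$, each term of $F_2(s)$ is the corresponding term of $F_1(s)$ times the constant factor $(a_1(z_n)/B^{ns})^{1-s}$ (and likewise $F_3$ versus $F_2$), together with $F_i(s_{n,i})=1$ from Lemma \ref{l:>1/2} and the monotonicity of each $F_i$; the paper compresses the factoring observation into the phrase ``which is equivalent to,'' whereas you write the identity out explicitly and add the (correct) remarks about the $z_n=0$ convention and the degenerate endpoint $s_{n,i}=1$.
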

\begin{proof}
	(1) Since $s_{n,1}\le s_{n,2}$, by Lemma \ref{l:>1/2} (2) and the definitions of $s_{n,1}$ and $s_{n,2}$,
	\[\begin{split}
		&\sum_{a_1,\dots,a_n\in\N}\frac{1}{q_n(a_1,\dots,a_n)^{2s_{n,1}}B^{ns_{n,1}^2}}
		= 1\le \sum_{a_1,\dots,a_n\in\N}\frac{a_1(z_n)^{1-s_{n,1}}}{q_n(a_1,\dots,a_n)^{2s_{n,1}}B^{ns_{n,1}}},
	\end{split}\]
	which is equivalent to
	\[a_1(z_n)\ge B^{ns_{n,1}}.\]

The proofs of the items (2)--(4) follow the same lines as item (1).
\end{proof}
\section{Upper bound of $\hdim E_2(\{z_n\}_{n\ge 1},B)$}
In the remaining of this paper, to simplify the notation, sequences of natural numbers will be denoted by letters in boldface: $\bm a,\bm b,\dots$.

Let
\[
F_n:=\bigcup_{\bm a\in\N^n}\{x\in I_n(\bm a):|T^nx-z_n||T^{n+1}x-Tz_n|<B^{-n}\}.
\]
It follows that
\[E_2(\{z_n\}_{n\ge 1},B)=\bigcap_{N=1}^\infty\bigcup_{n=N}^\infty F_n.\]
Our next objective is to find a suitable cover of $F_n$ for $n\ge 1$, which depends on the values of $ a_1(z_n) $. The following identity will be crucial for this purpose. Note that for any $x=[a_1(x),\dots,a_n(x),\dots]$,
\[T^nx=[a_{n+1}(x),\dots]=\frac{1}{a_{n+1}(x)+T^{n+1}x}.\]
Then, it follows that for $z_n\ne0$,
\begin{align}
	|T^nx-z_n|=&\bigg|\frac{1}{a_{n+1}(x)+T^{n+1}x}-\frac{1}{a_1(z_n)+Tz_n}\bigg|\notag\\
	=&\bigg|\frac{(a_1(z_n)-a_{n+1}(x))+(Tz_n-T^{n+1}x)}{(a_{n+1}(x)+T^{n+1}x)(a_1(z_n)+Tz_n)}\bigg|.\label{eq:identity}
\end{align}
The proof of the upper bound relies on investigating the finer structure of $F_n$, which will be divided into two cases presented in the following two subsections respectively.

Let $s>\limsup\limits_{n \to \infty} s_n$, where $s_n$ is given in \eqref{eq:sn}. Then, there exists an $\epsilon>0$ for which
\begin{equation}\label{eq:sandepsilon}
	s-\epsilon>s_n\quad\text{for all large $n$}.
\end{equation}
\subsection{Optimal cover of $F_n$ when $n$ is large enough and $s_{n,1}\le s_{n,2}$}
Suppose that $n$ is large enough so that \eqref{eq:sandepsilon} is satisfied. Since {\color{red}$s_{n,1}\le s_{n,2}$}, we have
\begin{equation}\label{eq:sn<}
	s_n=s_{n,1}<s-\epsilon
\end{equation}
 with $s$ and $\epsilon$ given in \eqref{eq:sandepsilon}, and by Lemma \ref{l:a1(zn)>}
\[a_1(z_n)\ge B^{ns_{n,1}}.\]

Now, consider the intersection of $F_n$ with $(n+1)$th level cylinders. For any $ (\bm a,a_{n+1})\in\N^{n+1} $ with $\bm a\in\N^n$ and $ a_{n+1}\le B^{ns_{n,1}}/2$, let
\[J_{n+1}(\bm a,a_{n+1}):=F_n\cap I_{n+1}(\bm a,a_{n+1}).\]

Let $x\in I_{n+1}(\bm a,a_{n+1})$. Then, we have $a_{n+1}(x)=a_{n+1}$, and so $|a_{n+1}(x)-a_1(z_n)|\ge B^{ns_{n,1}}/2$, which is much larger than $|Tz_n-T^{n+1}x|$. Applying the identity \eqref{eq:identity} and using $0\le T^{n+1}x, Tz_n\le 1$, we get
\begin{equation}\label{eq:J1}
	\frac{|a_1(z_n)-a_{n+1}|}{8a_1(z_n)a_{n+1}}\le |T^nx-z_n|\le \frac{2|a_1(z_n)-a_{n+1}|}{a_1(z_n)a_{n+1}}.
\end{equation}
If $x$ also belongs to $J_{n+1}(\bm a,a_{n+1})$, then it satisfies the above inequality and $ |T^nx-z_n||T^{n+1}x-Tz_n|\le B^{-n} $.
Thus,
\[ |T^{n+1}x-Tz_n|\le \frac{8a_1(z_n)a_{n+1}}{|a_1(z_n)-a_{n+1}|B^n},\]
which implies that
\[\begin{split}
	J_{n+1}(\bm a,a_{n+1})
	\subset&\bigg\{x\in I_{n+1}(\bm a,a_{n+1}):|T^{n+1}x-Tz_n|\le \frac{8a_1(z_n)a_{n+1}}{|a_1(z_n)-a_{n+1}|B^n}\bigg\}\\
	=&\bigg\{x\in I_{n+1}(\bm a,a_{n+1}):T^{n+1}x\in B\bigg(Tz_n,\frac{8a_1(z_n)a_{n+1}}{|a_1(z_n)-a_{n+1}|B^n}\bigg)\bigg\}.
\end{split}\]
Since for any $x\in I_{n+1}(\bm a,a_{n+1})$,
\begin{equation}\label{eq:T'}
	q_{n+1}(\bm a,a_{n+1})^2\le (T^{n+1})'(x)\le2q_{n+1}(\bm a,a_{n+1})^2,
\end{equation}
it follows that
\begin{align}
	|J_{n+1}(\bm a,a_{n+1})|&\le \frac{16 a_1(z_n)a_{n+1}}{q_{n+1}(\bm a,a_{n+1})^2|a_1(z_n)-a_{n+1}|B^n}\notag\\
	&\le\frac{16 a_1(z_n)}{q_{n}(\bm a)^2a_{n+1}|a_1(z_n)-a_{n+1}|B^n}.\label{eq:J1<}
\end{align}

We take this opportunity to infer that $J_{n+1}(\bm a,a_{n+1})$ contains an interval of length comparable to \eqref{eq:J1<}, which will be needed in the subsequent proof of the lower bound of $\hdim E_2(\{z_n\}_{n\ge1}, B)$. Note that \eqref{eq:J1} holds for any $x \in I_{n+1}(\bm a,a_{n+1})$. If $x$ further satisfies
\[ |T^{n+1}x-Tz_n| \le \frac{a_1(z_n)a_{n+1}}{2|a_1(z_n)-a_{n+1}|B^n},\]
then by the second inequality in \eqref{eq:J1},  one has $ |T^nx-z_n||T^{n+1}x-Tz_n|\le B^{-n} $,
which implies
\[J_{n+1}(\bm a,a_{n+1})\supset \bigg\{x\in I_{n+1}(\bm a,a_{n+1}):T^{n+1}x\in B\bigg(Tz_n,\frac{a_1(z_n)a_{n+1}}{2|a_1(z_n)-a_{n+1}|B^n}\bigg)\bigg\}.\]
By \eqref{eq:T'}, $J_{n+1}(\bm a,a_{n+1})$ contains an interval with length greater than
\begin{equation}\label{eq:J1>}
	\frac{a_1(z_n)a_{n+1}}{4q_{n+1}(\bm a,a_{n+1})^2|a_1(z_n)-a_{n+1}|B^n}\ge \frac{ a_1(z_n)}{16q_{n}(\bm a)^2 a_{n+1}|a_1(z_n)-a_{n+1}|B^n}.
\end{equation}

Note that $F_n$ can be covered by the union of the interval $ \bigcup_{a_{n+1}>B^{ns_{n,1}}/2}I_{n+1}(\bm a,a_{n+1}) $ and the sets $  J_{n+1}(\bm a,a_{n+1})$ with $a_{n+1}\le B^{ns_{n,1}}/2$.
Therefore, by the previous discussion the $s$-volume of optimal cover of $F_n$ can be estimated as follows
\[\begin{split}
	&\sum_{\bm a\in\N^n}\bigg(\frac{1}{q_n(\bm a)^{2s}(B^{ns_{n,1}}/2)^s}+\sum_{a_{n+1}\le B^{ns_{n,1}}/2 }\frac{16^{s}a_1(z_n)^{s}}{q_{n}(\bm a)^{2s}a_{n+1}^{s}|a_1(z_n)-a_{n+1}|^{s}B^{ns}}\bigg)\\
	=&\sum_{\bm a\in\N^n}\bigg(\frac{1}{q_n(\bm a)^{2s}(B^{ns_{n,1}}/2)^s}+\frac{16^s}{q_n(\bm a)^{2s}B^{ns}}\cdot\sum_{a_{n+1}\le B^{ns_{n,1}}/2 }\frac{a_1(z_n)^{s}}{a_{n+1}^{s}|a_1(z_n)-a_{n+1}|^{s}}\bigg)\\
	\asymp&\sum_{\bm a\in\N^n}\bigg(\frac{B^{-n s s_{n,1}}}{q_n(\bm a)^{2s}}+\frac{B^{ns_{n,1}(1-s)-ns}}{q_{n}(\bm a)^{2s}}\bigg)
	\asymp\sum_{\bm a\in\N^n}\frac{B^{-n s s_{n,1}}}{q_n(\bm a)^{2s}},
\end{split}\]
where we use the main ideas of the proof of Lemma \ref{l:summation2} with $t=s>s_{n,1}+\epsilon>1/2$ (by Lemma \ref{l:>1/2} (1)), $a=a_1(z_n)$ and $b=a_{n+1}$ for the inner-most summation in the second formula. By the definition of $s_{n,1}$ and the fact that $s>s_{n,1}+\epsilon$, we see that
\begin{equation}\label{eq:sum1<}
	\sum_{\bm a\in\N^n}\frac{B^{-n s s_{n,1}}}{q_n(\bm a)^{2s}}\le 2^{-n\delta_1}
\end{equation}
for some $\delta_1>0$ depending on $s$ only.

\subsection{Optimal cover of $F_n$ when $n$ is large enough and $s_{n,1} > s_{n,2}$} Suppose that $n$ is large enough so that \eqref{eq:sandepsilon} is satisfied. Since $s_{n,1}> s_{n,2}$, we have
\begin{equation}\label{eq:sn<2}
	s_n=\max \{ s_{n,2},s_{n,3} \}<s-\epsilon
\end{equation}
with $s$ and $\epsilon$ given in \eqref{eq:sandepsilon}, and by Lemma \ref{l:a1(zn)>}
\[a_1(z_n)< B^{ns_{n,2}}.\]
Follow the same notation in the last section, still let
$J_{n+1}(\bm a,a_{n+1}):=F_n\cap I_{n+1}(\bm a,a_{n+1})$ with $ (\bm a,a_{n+1})\in\N^{n+1} $. Let $x\in I_{n+1}(\bm a,a_{n+1})$, and so $a_{n+1}(x)=a_{n+1}$. The discussion is split into three subcases.

\noindent\textbf{Subcase (1A)}: $ |a_1(z_n)-a_{n+1}|>1 $. Since $|Tz_n-T^{n+1}x|<1$, applying the identity \eqref{eq:identity}, we can get
\[\frac{|a_1(z_n)-a_{n+1}|}{8a_1(z_n)a_{n+1}}\le |T^nx-z_n|\le \frac{2|a_1(z_n)-a_{n+1}|}{a_1(z_n)a_{n+1}}.\]
Since $x\in I_{n+1}(\bm a,a_{n+1})$ is arbitrary, by the same reason as \eqref{eq:J1<} and \eqref{eq:J1>}, we get that $J_{n+1}(\bm a,a_{n+1})$ is contained in an interval with length at most
\begin{equation}\label{eq:Jn3<1}
	\frac{16 a_1(z_n)}{q_{n}(\bm a)^2a_{n+1}|a_1(z_n)-a_{n+1}|B^n}
\end{equation}
and contains an interval with length at least
\begin{equation}\label{eq:Jn3>1}
\frac{a_1(z_n)}{16q_{n}(\bm a)^2a_{n+1}|a_1(z_n)-a_{n+1}|B^n}.
\end{equation}

\noindent\textbf{Subcase (1B)}: $ |a_1(z_n)-a_{n+1}|=1 $.
Applying the identity \eqref{eq:identity} again, we obtain

\[|T^nx-z_n|=\bigg|\frac{1-(Tz_n-T^{n+1}x)}{(a_{n+1}+T^{n+1}x)(a_1(z_n)+Tz_n)}\bigg|\ge \frac{|1-(Tz_n-T^{n+1}x)|}{4a_1(z_n)a_{n+1}}.\]
Denote
\[J_{n+1}^{(1)}(\bm a,a_{n+1}):=\{x\in I_{n+1}(\bm a,a_{n+1}):|T^{n+1}x-Tz_n|\le 8a_1(z_n)a_{n+1}B^{-n}\}\]
and
\[J_{n+1}^{(2)}(\bm a,a_{n+1}):=\{x\in I_{n+1}(\bm a,a_{n+1}):|T^{n+1}x-Tz_n|\ge 1-8a_1(z_n)a_{n+1}B^{-n}\}.\]
If  $ 8a_1(z_n)a_{n+1} B^{-n} \ge 1/2 $, then the union of the above two sets is $ I_{n+1}(\bm a,a_{n+1}) $, which can obviously cover $J_{n+1}(\bm a,a_{n+1})$.

Now suppose that $ 8a_1(z_n)a_{n+1} B^{-n}< 1/2$. Let $y \notin J_{n+1}^{(1)}(\bm a,a_{n+1})\cup J_{n+1}^{(2)}(\bm a,a_{n+1})$. There are two cases, one is $ 1/2\le |T^{n+1}y-Tz_n|<1-8a_1(z_n)a_{n+1} B^{-n} $, and the other is $ 8a_1(z_n)a_{n+1} B^{-n}<|T^{n+1}y-Tz_n|<1/2 $.
For the first case,
\[\begin{split}
	|T^ny-z_n||T^{n+1}y-Tz_n|&\geq \frac{|1-(Tz_n-T^{n+1}y)|}{4a_1(z_n)a_{n+1}}\cdot|T^{n+1}y-Tz_n|\\
	&> \frac{8a_1(z_n)a_{n+1} B^{-n}}{4a_1(z_n)a_{n+1}}\cdot\frac{1}{2}=B^{-n}.
\end{split}\]
By employing the same strategy on the other case, one has for $y\notin J_{n+1}^{(1)}(\bm a,a_{n+1})\cup J_{n+1}^{(2)}(\bm a,a_{n+1})$,
\[|T^ny-z_n||T^{n+1}y-Tz_n|>B^{-n},\]
which implies that $y\notin J_{n+1}(\bm a,a_{n+1})$.
{\color{red}Summarizing},
\[J_{n+1}(\bm a,a_{n+1})\subset J_{n+1}^{(1)}(\bm a,a_{n+1})\cup J_{n+1}^{(2)}(\bm a,a_{n+1}),\]
and therefore by the same reason as \eqref{eq:J1<}, $J_{n+1}(\bm a,a_{n+1})$ can be covered by two intervals whose length is at most
\begin{equation}\label{eq:Jn3<2}
	\frac{16a_1(z_n)B^{-n}}{q_{n}(\bm a)^2a_{n+1}|a_1(z_n)-a_{n+1}|}.
\end{equation}

\noindent\textbf{Subcase (1C)}: $ a_1(z_n)=a_{n+1} $. Using the identity \eqref{eq:identity},  one has
\[\frac{|T^{n+1}x-Tz_n|^2}{4a_1(z_n)^2}\le |T^nx-z_n||T^{n+1}x-Tz_n|\le \frac{|T^{n+1}x-Tz_n|^2}{a_1(z_n)^2}.\]
Clearly, the following holds
\[\begin{split}
	&\{x\in I_{n+1}(\bm a,a_{n+1}):|T^{n+1}x-Tz_n| \le a_1(z_n)B^{-n/2}\}\\
	\subset  &J_{n+1}(\bm a,a_{n+1}) \\
	\subset  &\{x\in I_{n+1}(\bm a,a_{n+1}):|T^{n+1}x-Tz_n|\le 2 a_1(z_n)B^{-n/2}\}.
\end{split}\]
This together with \eqref{eq:T'} gives
\begin{equation}\label{eq:Jn3<3}
	|J_{n+1}(\bm a,a_{n+1})|\le \frac{2a_1(z_n) B^{-n/2}}{q_{n+1}(\bm a,a_1(z_n))^2}\le\frac{2 B^{-n/2}}{q_n(\bm a)^2a_1(z_n)}
\end{equation}
and $J_{n+1}(\bm a,a_{n+1})$ contains an interval with length greater than
\begin{equation}\label{eq:Jn3>3}
\frac{a_1(z_n) B^{-n/2}}{q_{n+1}(\bm a,a_1(z_n))^2}\ge\frac{B^{-n/2}}{4q_n(\bm a)^2a_1(z_n)}.
\end{equation}

Combining the estimations \eqref{eq:Jn3<1}, \eqref{eq:Jn3<2} and \eqref{eq:Jn3<3}, the $s$-volume of optimal cover of $F_n$ can be estimated as follows
\[\begin{split}
	\ll&\sum_{\bm a\in\N^n}\bigg(\sum_{a_{n+1}\ne a_1(z_n)}\frac{a_1(z_n)^s B^{-ns}}{q_{n}(\bm a)^{2s}a_{n+1}^s|a_1(z_n)-a_{n+1}|^s}+\frac{B^{-ns/2}}{q_n(\bm a)^{2s}a_1(z_n)^{s}}\bigg)\\
	\asymp&\sum_{\bm a\in\N^n}\bigg(\frac{a_1(z_n)^{1-s} B^{-ns}}{q_{n}(\bm a)^{2s}}+\frac{B^{-ns/2}}{q_n(\bm a)^{2s}a_1(z_n)^{s}}\bigg),
\end{split}\]
where we have used Lemma \ref{l:summation2} with $t=s>\max \{ s_{n,2},s_{n,3} \}+\epsilon >1/2$ (by Lemma \ref{l:>1/2} (1)), $a=a_1(z_n)$ and $b=a_{n+1}$ for the inner-most summation.
By the definitions of $s_{n,2}$ and $s_{n,3}$ and $s>\max \{ s_{n,2},s_{n,3} \}+\epsilon$ (see \eqref{eq:sn<2}), we have
\begin{equation}\label{eq:sum2<}
	\sum_{\bm a\in\N^n}\bigg(\frac{a_1(z_n)^{1-s} B^{-ns}}{q_{n}(\bm a)^{2s}}+\frac{B^{-ns/2}}{q_n(\bm a)^{2s}a_1(z_n)^{s}}\bigg)\le 2^{-n\delta_2}
\end{equation}
for some $\delta_2>0$ depending on $s$ only.
\subsection{Completing the proof of the upper bound of $\hdim E_2(\{z_n\}_{n\ge 1},B)$} By the previous two subsections, we see that for given $s>\limsup\limits_{n \to \infty} s_n$, by \eqref{eq:sum1<} and \eqref{eq:sum2<} there exists a cover of $F_n$ for which the corresponding $s$-volume
\[\ll 2^{-n\min \{ \delta_1,\delta_2 \}},\]
where $\delta_1>0$ and $\delta_2>0$ are respectively given in \eqref{eq:sum1<} and \eqref{eq:sum2<} and depend on $s$ only.
Note that for any $N\in\N$,
\[E_2(\{z_n\}_{n\ge 1}, B)\subset \bigcup_{n=N}F_n.\]
By the definition of Hausdorff measure, one has
\[\mathcal H^s(E_2(\{z_n\}_{n\ge 1}, B))\ll\lim_{N\to\infty}\sum_{n=N}^{\infty}2^{-n\min \{ \delta_1,\delta_2 \}}=0,\]
which implies that $ \hdim E_2(\{z_n\}_{n\ge 1}, B)\le s $.
By the arbitrariness of $ s $ ($>\limsup\limits_{n \to \infty} s_n$), we have
\[\hdim E_2(\{z_n\}_{n\ge 1}, B)\le \limsup_{n\to\infty} s_n,\]
which is what we want.

\section{Lower bound of $\hdim E_2(\{z_n\}_{n\ge 1},B)$}
Before proving the lower bound, we list some definitions  and auxiliary results which will be used later.

For any set $E \subset \R$, its $s$-dimensional Hausdorff content is given by
\[
\mathcal H^s_\infty (E):= \inf \Big\{ \sum_{i=1}^{\infty} |B_i|^s : E \subset \bigcup_{i=1}^{\infty} B_i ~\text{where}~ B_i ~\text{are~open~balls} \Big\}.
\]
With this definition, the lower bound of the Hausdorff dimension of a limsup set can be estimated by verifying some Hausdorff content bounds. The following lemma is a variant version of Falconer's sets of large intersection condition. For the details, see \cite[Corollary 2.6]{He2024}.


\begin{lem}\label{l:LIP}
	Let $\{B_k\}_{k\ge 1}$ be a sequence of balls in $[0,1]$ that satisfies $\mathcal L(\limsup\limits_{k \to \infty} B_k)=\mathcal L([0,1])=1$. Let $ \{E_n\}_{n\ge 1} $ be a sequence of open sets in $[0,1]$ and $ E=\limsup\limits_{n \to \infty} E_n $. Let $ s>0 $. If for any $ 0<t<s $, there exists a constant $ c_t $ such that
	\begin{equation}\label{c:LIP condition}
		\limsup_{n\to\infty} \mathcal H^t_\infty(E_n\cap B_k)\ge c_t|B_k|
	\end{equation}
	holds for all $ B_k $, then $\hdim E\ge s$.
\end{lem}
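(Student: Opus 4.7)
The plan is to deduce Lemma \ref{l:LIP} from a limsup variant of Falconer's ``sets of large intersection'' framework. Fix an arbitrary $t\in(0,s)$; it will suffice to prove $\hdim E\ge t$ and then send $t\uparrow s$. Set $F_N:=\bigcup_{n\ge N}E_n$, which is open, with $E=\bigcap_{N\ge1}F_N$. The hypothesis \eqref{c:LIP condition} transfers from the $E_n$'s to the $F_N$'s immediately: for every $N$ and $k$ one may select $n\ge N$ with $\mathcal H^t_\infty(E_n\cap B_k)\ge (c_t/2)|B_k|$, so that monotonicity of Hausdorff content yields
\[
\mathcal H^t_\infty(F_N\cap B_k)\ge \tfrac{c_t}{2}\,|B_k|\qquad\text{for every }N,k\ge 1.
\]

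Next, I would upgrade this bound from the distinguished family $\{B_k\}$ to every sufficiently small ball of $[0,1]$. The assumption $\mathcal L(\limsup_k B_k)=1$, combined with Lebesgue's density theorem, makes $\{B_k\}$ a Vitali-type fine cover of a full-measure subset: for $\mathcal L$-a.e.\ $x\in[0,1]$ and every sufficiently small $r>0$ there exists some $B_k\subset B(x,r)$ with $|B_k|\asymp r$. Combining this with the preceding estimate produces a uniform constant $c'_t>0$ such that
\[
\mathcal H^t_\infty(F_N\cap B)\ge c'_t\,|B|
\]
for every small ball $B$ centered in a set of full Lebesgue measure and every $N\ge 1$.

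The final step is to pass from the $F_N$'s to their intersection $E$. This is precisely the content of Falconer's intersection theorem for the large-intersection class $\mathcal G^t$, here in the limsup form of \cite[Corollary~2.6]{He2024}: a countable intersection of open sets each carrying a uniform Hausdorff $t$-content lower bound of the above type inherits such a bound up to constants. Applied to $\{F_N\}_{N\ge 1}$, this gives $\mathcal H^t_\infty(E\cap B)\gtrsim |B|^t$ on a full-measure collection of small balls, hence $\hdim E\ge t$; letting $t\uparrow s$ concludes the proof.

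The hard part will be precisely this third step, namely the preservation of a Hausdorff $t$-content lower bound under countable intersection. The standard route is an inductive Cantor-type construction inside a test ball $B$: one successively refines almost-optimal $t$-covers of $F_1\cap B$, $F_2\cap B,\ldots$, exploiting openness of each $F_N$ and the strength of the hypothesis (the right-hand side being $c_t|B_k|$ rather than $c_t|B_k|^t$) to absorb the content losses incurred at every generation, and then verifies that the resulting compact limit set lies in $E$ and carries nontrivial $\mathcal H^t$-mass. This is the technical heart of the argument and the only step not reducible to routine content manipulation.
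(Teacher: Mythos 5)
The paper does not actually prove this lemma; it is stated as a variant of Falconer's large-intersection condition and the reader is referred to Corollary~2.6 of He (2024) for the proof. There is therefore no internal argument to compare against, and your third step explicitly appeals to that very corollary, which makes the proposal circular: you are invoking the result you set out to prove. Your step 1, replacing the $E_n$ by the open sets $F_N=\bigcup_{n\ge N}E_n$ via monotonicity of Hausdorff content and a standard $\limsup$ extraction, is fine and is a natural preliminary normalisation.

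Step 2 contains a genuine error. You claim that $\mathcal L(\limsup_{k} B_k)=1$, combined with Lebesgue's density theorem, turns $\{B_k\}$ into a Vitali-type fine cover, so that for a.e.\ $x$ and every small $r>0$ some $B_k\subset B(x,r)$ has $|B_k|\asymp r$. This implication is false. The hypothesis says only that almost every point lies in infinitely many of the $B_k$; it imposes no control on the radii of the $B_k$ containing a given point and in particular does not force them to be small. Take $B_k=[0,1]$ for every $k$, or alternate $B_k$ between $[0,1/2]$ and $[1/2,1]$: in either case $\mathcal L(\limsup_k B_k)=1$, yet the family contains no small balls around any point. Lebesgue's density theorem concerns density points of a fixed measurable set and does not convert full measure of a $\limsup$ into a fine-covering property. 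Consequently the upgrade from the distinguished family $\{B_k\}$ to arbitrary small balls does not follow by this route, and the Cantor-type construction you sketch afterwards would then lack the required content estimates at intermediate scales. The point of He's Corollary~2.6 is precisely to make the argument run on a family $\{B_k\}$ satisfying only the full-measure $\limsup$ condition, so a correct proof has to handle that family directly rather than reducing to the classical ``all small balls'' setting. (A secondary remark: since $|B_k|\le|B_k|^t$ for $|B_k|\le 1$ and $t<1$, the right-hand side $c_t|B_k|$ is the \emph{weaker} lower bound, not the stronger one as you suggest in your sketch of the Cantor construction.)
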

\begin{rem}
	In fact, limsup set satisfying \eqref{c:LIP condition} has the so-called large intersection property (see \cite{Falconer1994} or \cite{He2024}), which means that the intersection of the sets with countably many similar copies of itself still has Hausdorff dimension at least $s$. In particular, the Hausdorff dimension of such limsup set is at least $s$. This property is beyond the subject of this paper, so we will not go into detail.
\end{rem}

The Hausdorff content of a Borel set is typically estimated by putting measures or mass distributions on it, following the mass distribution described below.

\begin{prop}[{Mass distribution principle \cite[Lemma 1.2.8]{BP2017}}]
	Let $E$ be a Borel subset of $\R$.
	If $E$ supports a strictly positive Borel measure $\mu$ that satisfies
	\[
	\mu(B(x, r)) \le c r^s,
	\]
	for some constant $0< c< \infty$ and for every ball $B(x, r)$, then $ \mathcal H^s_\infty (E) \ge \mu(E)/c$.
\end{prop}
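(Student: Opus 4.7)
The plan is standard: the mass distribution principle follows immediately from countable subadditivity of $\mu$ applied to any open-ball cover of $E$, combined term-by-term with the hypothesis. The proof is extremely short and has no real obstacle, so I will describe it as two short steps and then note the only piece of bookkeeping.

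First, I would fix an arbitrary countable cover $\{B_i\}_{i\ge 1}$ of $E$ by open balls, writing $B_i = B(x_i, r_i)$. Since each $B_i$ is Borel and $E \subset \bigcup_i B_i$, and $\mu$ is a (countably subadditive) Borel measure, we get
\[
\mu(E) \le \mu\Bigl(\bigcup_{i=1}^\infty B_i\Bigr) \le \sum_{i=1}^\infty \mu(B_i).
\]
This is the only measure-theoretic input of the argument, and it is what forces $E$ to be Borel in the hypothesis.

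Second, I would apply the hypothesis $\mu(B(x,r)) \le c r^s$ to each ball in the cover. Using the relation between radius and diameter, $r_i \le |B_i|$, we obtain $\mu(B_i) \le c r_i^s \le c |B_i|^s$. Substituting into the previous display gives
\[
\mu(E) \le c \sum_{i=1}^\infty |B_i|^s.
\]
Since the cover $\{B_i\}$ was arbitrary, taking the infimum over all countable open-ball covers of $E$ yields $\mu(E) \le c\, \mathcal H^s_\infty(E)$, which rearranges to the desired bound $\mathcal H^s_\infty(E) \ge \mu(E)/c$.

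The only subtlety is conventional: depending on whether one defines $|B_i|$ as the diameter $2r_i$ or as the radius $r_i$ in the definition of the $s$-dimensional Hausdorff content, one may pick up a harmless factor of $2^s$ which can be absorbed into $c$. Strict positivity of $\mu$ enters the statement only to ensure $\mu(E) > 0$, so that the conclusion is nontrivial; it plays no role in the inequality itself. Hence there is no real analytic obstacle — the entire argument is one application of subadditivity followed by a term-by-term substitution.
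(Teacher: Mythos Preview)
Your proof is correct and is the standard argument for the mass distribution principle. The paper does not provide its own proof of this proposition; it simply cites it from Bishop--Peres \cite[Lemma 1.2.8]{BP2017}, so there is nothing further to compare.
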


Since
\[\bigcap_{n=1}^\infty\bigcup_{\bm u\in \N^n}I_n(\bm u)=[0,1]\setminus \Q,\]
the sequence of balls demanded in Lemma \ref{l:LIP} can be taken as the set of all cylinders. We will show that for any $0<t<s^*$ (recall that $s^*=\limsup\limits_{n \to \infty} s_n$), there exists a constant $c_t$ depending on $t$ such that for any $k\ge 1$, and an arbitrary $k$th level cylinder $I_k(\bm u)$ with $\bm u\in \N^k$,
\begin{equation}\label{eq:h(u)>}
	\limsup_{n\to\infty} \mathcal H_\infty^t(F_n\cap I_k(\bm u))\ge c_t|I_k(\bm u)|,
\end{equation}
where recall that
\[F_n=\bigcup_{\bm a\in\N^n} \{x\in I_n(\bm a):|T^nx-z_n||T^{n+1}x-Tz_n|\le B^{-n}\}. \]

In view of mass distribution principle, to establish \eqref{eq:h(u)>}, we will construct a measure $ \mu  $ supported on $ F_n\cap I_k(\bm u) $ and then estimate the $\mu$-measure of arbitrary balls. Here and hereafter, $\bm u$ and $t$ will be fixed. The proof is divided into three cases according to how $s^*$ is attained, presented section by section.
\subsection{Case I: $ s^*=\limsup\limits_{n \to \infty} s_n$ is obtained along a subsequence of $\{s_{n,1}\}_{n\ge 1}$}\label{s:1}
Note that $t<s^*$. There exist infinitely many $n$ such that
\begin{equation}\label{eq:sn1-1}
	s_n=s_{n,1}>t.
\end{equation}
For such $n$, by the definition of $s_n$,
\[s_{n,1}\le s_{n,2},\]
which by Lemma \ref{l:a1(zn)>} (1) gives
\begin{equation}\label{eq:a1zn-1}
	a_1(z_n)\ge B^{ns_{n,1}}.
\end{equation}

Following the same argument as \cite[Lemma 2.4]{WW2008} with some obvious modification, we have
\begin{equation}\label{pressure1}
s^*=\lim_{n\text{ satisfies \eqref{eq:sn1-1}}\atop n\to\infty}s_{n,1}=\inf\{s>0:P(T,-s\log|T'|-s^2\log B)\le 0\}.
\end{equation}
Fix an $\epsilon<s^*-t$. By Proposition \ref{p:property of s} and Lemma \ref{l:>1/2} (2), there exist integers $ \ell>\max \{\log_B4/\epsilon, 2t/\epsilon+1 \} $ and $ M>0 $ such that the unique positive number $s=s(\ell,M)$ satisfying the equation
\begin{equation}\label{eq:defs-1}
	\sum_{\bm a\in\{1,\dots,M \}^\ell}\frac{1}{q_{\ell}(\bm a)^{2s}B^{\ell s^2}}=1
\end{equation}
is greater than $t+\epsilon$. It should be emphasized that $ \ell>\max \{  \log_B4/\epsilon, 2t/\epsilon+1 \}   $ implies that
\begin{equation}\label{eq:Ble-1}
	B^{\ell\epsilon}\ge 4
\end{equation}
and that for any $\bm a\in \N^\ell$
\begin{equation}\label{eq:ql-1}
	q_\ell(\bm a)^{-2s}\le q_\ell(\bm a)^{-2(t+\epsilon)}\le q_\ell(\bm a)^{-2t}2^{-(\ell-1)\epsilon}\le q_\ell(\bm a)^{-2t}2^{-2t}.
\end{equation}

Choose $n\in \N$ so that $n-k\ge \ell kt/\epsilon$ and \eqref{eq:sn1-1} is satisfied, and write $n-k=m\ell+\ell_0$, where $0\le \ell_0<\ell$. By the choice of $n$, one has $m\ge kt/\epsilon$.

From now on, let $\ell, M $ and $n$ be fixed. Let $1^{\ell_0}$ be the word consisting only of $1$ and of length $\ell_0$. Let $\tilde{\bm u}=(\bm u,1^{\ell_0})\in\N^{k+\ell_0}$ and consider the following auxiliary set defined by $(n+1)$th level cylinders:
\begin{equation}\label{eq:auxI-1}
	\begin{split}
		\{I_{n+1}(\tilde {\bm u},\bm a_1,\dots,\bm a_m,b):&\bm a_i\in\{1,\dots,M\}^\ell, 1\le i\le m,\\
		& B^{nt}\le b\le 2B^{nt} \text{ and $b$ is even}\}.
	\end{split}
\end{equation}
The additional requirement here, that $b$ be even, is that any two cylinders in the above set are well separated (see Lemma \ref{l:pro En-1} below). By \eqref{eq:J1>}, there is an interval, denoted by $\mathcal I_1(\tilde{\bm u},\bm a_1,\dots,\bm a_m,b) \footnote{Given that there are three cases (\S \ref{s:1}--\S\ref{s:3}) to consider, each requiring the construction of subsets and measures, we will use subscripts 1, 2, 3 or superscripts (1), (2), (3) to distinguish and avoid burdening of notation, where there is no risk of ambiguity.}$ such that
\begin{equation}\label{eq:interval-1}
	\mathcal I_1(\tilde{\bm u},\bm a_1,\dots,\bm a_m,b)\subset F_n\cap I_{n+1}(\tilde{\bm u},\bm a_1,\dots,\bm a_m,b).
\end{equation}
It is easy to see that $\mathcal I_1(\tilde{\bm u},\bm a_1,\dots,\bm a_m,b)\subset F_n\cap I_{k+\ell_0}(\tilde{ \bm u})\subset F_n\cap I_k(\bm u)$.
Moreover, the length of this interval can be estimated as
\begin{align}
	|\mathcal I_1(\tilde{\bm u},\bm a_1,\dots,\bm a_m,b) |
& \geq \frac{a_1(z_n)}{16 q_{n}(\tilde{\bm u},\bm a_1,\dots,\bm a_m)^2b|a_1(z_n)-b|B^n} \notag\\
& \geq \frac{1}{16 q_n(\tilde{\bm u},\bm a_1,\dots,\bm a_m)^2 b B^{n}}\notag\\
	& \geq \frac{1}{32 q_n(\tilde{\bm u},\bm a_1,\dots,\bm a_m)^2 B^{n(1+t)}},\label{eq:lowerI}
\end{align}
	where we use $a_1(z_n)\ge B^{ns_{n,1}}>B^{nt}B^{n\epsilon}\ge 4B^{nt}$ (see \eqref{eq:a1zn-1} and \eqref{eq:Ble-1}) and $b \leq 2B^{nt}$ in the second and third inequalities. In what follows, we call $\mathcal I_1(\tilde{\bm u},\bm a_1,\dots,\bm a_m,b)$ in $F_n\cap I_{k+\ell_0}(\tilde{\bm u})$ as {\it fundamental interval}, and the cylinders $I_{n+1}(\tilde {\bm u},\bm a_1,\dots,\bm a_m,b)$ and $I_{k+\ell_0+p\ell}(\tilde{\bm u},\bm a_1,\dots,\bm a_p)$ with $0\le p\le m$ as {\it basic cylinders}.

Define a probability measure $\mu_1$ supported on $F_n\cap I_{k+\ell_0}(\tilde {\bm u})\subset F_n\cap I_k(\bm u)$ as follows:
\begin{equation}\label{eq:mu-1}
	\mu_1=\sum_{\bm a_1\in\{1,\dots,M \}^\ell}\cdots\sum_{\bm a_m\in\{1,\dots,M \}^\ell}\sum_{B^{nt}\le b\le 2 B^{nt}\atop b\text{ is even}}\bigg(\prod_{i=1}^m\frac{1}{q_\ell(\bm a_i)^{2s}B^{\ell s^2}}\bigg)\cdot \frac{2}{B^{nt}}\cdot\lm_{\bm a_1,\dots,\bm a_m,b}^{(1)},
\end{equation}
where
\[\lm_{\bm a_1,\dots,\bm a_m,b}^{(1)}:=\frac{\lm|_{\mathcal I_1(\tilde{\bm u},\bm a_1,\dots,\bm a_m,b)}}{\lm(\mathcal I_1(\tilde{\bm u},\bm a_1,\dots,\bm a_m,b))}\]
denotes the normalized Lebesgue measure on the fundamental interval $\mathcal I_1(\tilde{\bm u},\bm a_1,\dots,\bm a_m,b)$. Roughly speaking, we assign each word $(\bm a_1,\dots,\bm a_m,b)$ a different weight: the weights corresponding to $\bm a_i$ and $b$ are, respectively,
\[\frac{1}{q_\ell(\bm a_i)^{2s}B^{\ell s^2}}\quad\text{and}\quad \frac{2}{B^{nt}}.\]
The definition of $s$ (see \eqref{eq:defs-1}) ensures that $\mu_1$ is a probability measure.

The next two lemmas describe the gap between fundamental intervals defined in \eqref{eq:interval-1} and their $\mu_1$-measures, respectively.
\begin{lem}\label{l:pro En-1}
	Let $ \mathcal I_1=\mathcal I_1(\tilde{\bm u},\bm a_1,\dots,\bm a_m,b) $ and $ \mathcal I_1'=\mathcal I_1(\tilde{\bm u},\bm a_1',\dots,\bm a_m',b')$ be two fundamental intervals defined in \eqref{eq:interval-1}. Then, the following statements hold:
	\begin{enumerate}[\upshape(1)]
		\item If  $ (\bm a_1,\dots,\bm a_{p-1})= (\bm a_1',\dots,\bm a_{p-1}') $ but $ \bm a_p \ne \bm a_p' $ for some $ 1\le p\le m $, then
		\[
		\dist(\mathcal I_1,\mathcal I_1')\ge\frac{|I_{k+\ell_0+p \ell}(\tilde{\bm u},\bm a_1,\dots,\bm a_p)|}{2(M+2)^4}.
		\]
		\item If $ (\bm a_1,\dots,\bm a_m)=(\bm a_1',\dots,\bm a_m') $ but $b\ne b'$, then
		\[\dist(\mathcal I_1,\mathcal I_1')\geq \frac{|I_{n+1}(\tilde{\bm u},\bm a_1,\dots,\bm a_m,b)|}{32}.\]
	\end{enumerate}
\end{lem}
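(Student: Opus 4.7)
Both statements are separation estimates, and I would dispatch part (2) first as it is considerably more direct. Both fundamental intervals sit in sub-cylinders of the common $n$th level cylinder $I_n(\tilde{\bm u},\bm a_1,\dots,\bm a_m)$ that differ only in the terminal digit $b$ versus $b'$. Since $b,b'$ are both even and distinct, $|b-b'|\ge2$; taking $b<b'$ without loss of generality, Proposition~\ref{C-D} places the sub-cylinder $I_{n+1}(\tilde{\bm u},\bm a_1,\dots,\bm a_m,b+1)$ strictly between those enclosing $\mathcal I_1$ and $\mathcal I_1'$. Hence $\dist(\mathcal I_1,\mathcal I_1')\ge|I_{n+1}(\tilde{\bm u},\bm a_1,\dots,\bm a_m,b+1)|$, and the length formula in Proposition~\ref{p:basic property}(3) yields $|I_{n+1}(\bm w,b+1)|/|I_{n+1}(\bm w,b)|=(bq_n+q_{n-1})/((b+2)q_n+q_{n-1})\ge1/4$, well above the required $1/32$.

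For part (1), let $j$ be the first index in $\{1,\dots,\ell\}$ where $\bm a_p$ and $\bm a_p'$ disagree, set $\alpha:=(\bm a_p)_j$, $\alpha':=(\bm a_p')_j$ (both in $\{1,\dots,M\}$, WLOG $\alpha<\alpha'$), and write $q:=k+\ell_0+(p-1)\ell+j$ with $\bm w''$ the common prefix of length $q-1$. Then $\mathcal I_1\subset I_q(\bm w'',\alpha)$ and $\mathcal I_1'\subset I_q(\bm w'',\alpha')$. If $\alpha'\ge\alpha+2$, Proposition~\ref{C-D} inserts $I_q(\bm w'',\alpha+1)$ strictly between the two cylinders, giving $\dist(\mathcal I_1,\mathcal I_1')\ge|I_q(\bm w'',\alpha+1)|\ge|I_q(\bm w'',\alpha)|/8$ via Proposition~\ref{p:basic property}(2).

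The subtle case is $\alpha'=\alpha+1$, where $I_q(\bm w'',\alpha)$ and $I_q(\bm w'',\alpha+1)$ share a boundary point $P$ and the naive cylinder-separation estimate vanishes. The key observation, obtained by checking both parities of $q$ against Proposition~\ref{C-D}, is that within $I_q(\bm w'',\alpha+1)$ the sub-cylinders $I_{q+1}(\bm w'',\alpha+1,c)$ accumulate at $P$ as $c\to\infty$, while within $I_q(\bm w'',\alpha)$ they accumulate at the opposite endpoint. Now the next digit of the chain defining $\mathcal I_1'$ is bounded by $M$ (since the remaining digits of $\bm a_p'$ and all digits of $\bm a_{p+1}',\dots,\bm a_m'$ lie in $\{1,\dots,M\}$), except in the edge case $j=\ell,p=m$ where the next digit is the large $b'$. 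In the bounded-digit regime, a telescoping computation gives $\sum_{c'=1}^{M}|I_{q+1}(\bm w'',\alpha+1,c')|\le (M/(M+1))|I_q(\bm w'',\alpha+1)|$, so $\dist(\mathcal I_1',P)\ge|I_q(\bm w'',\alpha+1)|/(M+1)$; in the edge case, the parallel analysis applied to $\mathcal I_1$ (whose next digit is then the large $b$) yields $\dist(\mathcal I_1,P)\ge|I_q(\bm w'',\alpha)|/2$. Since $\mathcal I_1,\mathcal I_1'$ lie on opposite sides of $P$, $\dist(\mathcal I_1,\mathcal I_1')\ge\max\{\dist(\mathcal I_1,P),\dist(\mathcal I_1',P)\}\ge|I_q(\bm w'',\alpha)|/(8(M+1))$ in every scenario; by nesting this is at least $|I_{k+\ell_0+p\ell}(\tilde{\bm u},\bm a_1,\dots,\bm a_p)|/(8(M+1))$, comfortably exceeding the required $|I_{k+\ell_0+p\ell}|/(2(M+2)^4)$. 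The main obstacle is precisely this boundary sub-case: one must use the parity-dependent ordering of Proposition~\ref{C-D} together with the boundedness of the subsequent digits to pry the fundamental intervals apart even when the enclosing cylinders touch.
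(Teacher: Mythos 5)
Your proof is correct and follows the same key idea as the paper's: locate the first position $i=q$ where the two chains disagree, then exploit the boundedness of the $(i+1)$-th digit (or, in the edge sub-case $i=m\ell$, its largeness) to place a buffer between the two fundamental intervals, even when the level-$q$ cylinders $I_q(\bm w'',\alpha)$ and $I_q(\bm w'',\alpha')$ are adjacent. Part (2) is essentially verbatim the paper's argument (insert the intermediate cylinder with index between $b$ and $b'$, compare lengths). For part (1) the paper treats the touching and non-touching sub-cases uniformly: it always inserts a single level-$(q+1)$ buffer cylinder, with last digit $M+1$ (or digit $1$ in the edge case $i=m\ell$), and bounds the separation from below by that one cylinder's length, paying the lossier constant $2(M+2)^4$ from chaining Proposition~\ref{p:basic property}(2). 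You instead split on $\alpha'\ge\alpha+2$ (a whole level-$q$ cylinder fits between) versus $\alpha'=\alpha+1$, and in the touching case estimate the separation from the shared endpoint $P$ via a telescoping sum $\sum_{d\ge M+1}|I_{q+1}(\bm w'',\alpha+1,d)|\ge|I_q(\bm w'',\alpha+1)|/(M+1)$ rather than a single inserted cylinder; this different bookkeeping yields the slightly sharper constant $8(M+1)$. Since any constant depending only on $M$ (and $\ell$) suffices for the Hausdorff content estimate, the two arguments are interchangeable; yours is somewhat more fine-grained in its case analysis but arrives by the same road.
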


\begin{proof}

(1) Bear in mind that $\mathcal I_1$ and $\mathcal I_1'$ are two fundamental intervals contained in the $(n+1)$th level cylinders $I_{n+1}(\tilde{\bm u},\bm a_1,\dots,\bm a_m,b)$ and $I_{n+1}(\tilde{\bm u},\bm a_1',\dots,\bm a_m',b')$, respectively. For further discussion, we write $  (\bm a_1,\dots,\bm a_m,b)=(c_1,c_2,\ldots, c_{m\ell},c_{m\ell+1})$ and $ (\bm a_1',\dots,\bm a_m',b') = (c_1',c_2',\ldots, c_{m\ell}',c_{m\ell+1}') $ for the moment.
Assume that  $1\le i\le m\ell+1$ is the smallest integer such that $c_i  \ne c_i'$. By the assumption in item (1), we have $(p-1) \ell < i \le p \ell \le m\ell$.
Therefore, the distance between $\mathcal I_1$ and $\mathcal I_1'$ is majorized by
\begin{equation}\label{eq:disttwocylinder-1}
	\dist \big(I_{k+\ell_0+i+1}(\tilde{\bm u},c_1,\dots,c_{i-1},c_i,c_{i+1}),I_{k+\ell_0+i+1}(\tilde{\bm u},c_1,\dots,c_{i-1},c_i',c_{i+1}')\big).
\end{equation}
Now, we consider two cases.

\noindent\textbf{Case 1:} $1\le i<m\ell$. Since $i<m\ell$, we have $c_{i+1},c_{i+1}' \leq M$.
 By the distribution of cylinders (see Proposition \ref{C-D}), either $I_{k+\ell_0+i+1}(\tilde{\bm u}, c_1, \ldots, c_{i-1},c_i, M+1)$ or $I_{k+\ell_0+i+1}(\tilde{\bm u}, c_1, \ldots, c_{i-1},c_i', M+1)$ lies between two cylinders listed in \eqref{eq:disttwocylinder-1}. Without loss of {\color{red}generality}, assume that this is satisfied by the former one. Then, by \eqref{eq:disttwocylinder-1} and using Proposition \ref{p:basic property} (2) repeatly, we have
\begin{align}
	\dist(\mathcal I_1,\mathcal I_2)\ge&| I_{k+\ell_0+i+1}(\tilde{\bm u}, c_1, \ldots, c_{i-1},c_i, M+1) | \notag\\
	\geq &\frac{1}{2 q_{k+\ell_0+i+1} (\tilde{\bm u}, c_1, \ldots, c_{i-1},c_i, M+1)^2 } \notag\\
	\geq &\frac{1}{2 (M+2)^2 (c_i+1)^2 q_{k+\ell_0+i-1} (\tilde{\bm u}, c_1, \ldots, c_{i-1})^2   } \notag\\
	\geq &\frac{|I_{k+\ell_0+i-1} (\tilde{\bm u}, c_1, \ldots, c_{i-1})|}{2 (M+2)^4}  \notag \\
	\geq &\frac{|I_{k+\ell_0+p\ell} (\tilde{\bm u}, \bm a_1,\dots,\bm a_p)|}{2 (M+2)^4} \label{eq:lowerdist-1}
\end{align}
as desired.

\noindent\textbf{Case 2:} $i=m\ell$. In this case, $c_{i+1}=b$ and $c_{i+1}'=b'$. By the distribution of cylinder (see Proposition \ref{C-D}), we see that either $I_{n+1}(\tilde{\bm u},c_1,\dots,c_{m\ell-1},c_{m\ell},1)$ or $I_{n+1}(\tilde{\bm u},c_1,\dots,c_{m\ell-1},c_{m\ell}',1)$ lies between two cylinders listed in \eqref{eq:disttwocylinder-1}. By the same reason as \eqref{eq:lowerdist-1}, we can obtain the same conclusion.

(2) Without loss of {\color{red}generality}, assume that $b < b' $. Note that by \eqref{eq:auxI-1}, both $b$ and $b'$ are even. Then, for any $b''$ with $b<b''<b'$, the $(n+1)$th level cylinder $I_{n+1}(\tilde{\bm u},\bm a_1,\dots,\bm a_m,b'')$ lies between $\mathcal I_1$ and $\mathcal I_1'$.  This means the distance between $\mathcal I_1$ and $\mathcal I_1'$ is at least
\[\begin{split}
|I_{n+1}(\tilde{\bm u},\bm a_1,\dots,\bm a_m,b'')|&\ge \frac{1}{2q_{n+1}(\tilde{\bm u},\bm a_1,\dots,\bm a_m,b'')^2}\\
&\geq \frac{1}{2 \cdot 4^2 q_{n}(\tilde{\bm u},\bm a_1,\dots,\bm a_m)^2B^{2nt}}\\
& \geq \frac{1}{2 \cdot 4^2 q_{n}(\tilde{\bm u},\bm a_1,\dots,\bm a_m)^2 b^{2}}\\
&\ge \frac{1}{32}|I_{n+1}(\tilde{\bm u},\bm a_1,\dots,\bm a_m,b)|,
\end{split}\]
where we use $B^{nt}\le b'',b\le 2B^{nt}$ in the second and third inequalities.
\end{proof}

\begin{lem}\label{l:meamu}
	Let $ \mu_1$ be as in \eqref{eq:mu-1}. Then, the following statements hold:
	\begin{enumerate}[\upshape(1)]
		\item For any $ (\tilde{\bm u},\bm a_1,\dots,\bm a_p) $ with $ 0 \le p\le m $,
		\[\mu_1 (I_{k+\ell_0+p\ell}(\tilde{\bm u},\bm a_1,\dots,\bm a_p))\le\frac{q_{k+\ell_0+p \ell}(\tilde {\bm u},\bm a_1,\dots,\bm a_p)^{-2t}}{|I_{k+\ell_0}(\tilde{\bm u})|^t}.\]
		\item For any $ (\tilde{\bm u},\bm a_1,\dots,\bm a_m,b) $,
		\[
		\mu_1({\mathcal I_1}(\tilde{\bm u},\bm a_1,\dots,\bm a_m,b))\le \frac{64 |\mathcal I_1(\tilde{\bm u},\bm a_1,\dots,\bm a_m,b) |^t}{|I_{k+\ell_0}(\tilde {\bm u})|^t}.
		\]
	\end{enumerate}
\end{lem}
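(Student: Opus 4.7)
For part (1), my approach is to compute the $\mu_1$-mass of the basic cylinder directly from the definition \eqref{eq:mu-1}. Since each fundamental interval lies in a unique $(n+1)$-th level cylinder, the basic cylinder $I_{k+\ell_0+p\ell}(\tilde{\bm u},\bm a_1,\dots,\bm a_p)$ picks up exactly those summands for which the first $p$ blocks of coordinates match, so summing out $\bm a_{p+1},\dots,\bm a_m$ via \eqref{eq:defs-1} and noting that the $b$-sum is normalized to $1$ (by the probability property of $\mu_1$) yields
\[
\mu_1\bigl(I_{k+\ell_0+p\ell}(\tilde{\bm u},\bm a_1,\dots,\bm a_p)\bigr)=\prod_{i=1}^{p}\frac{1}{q_\ell(\bm a_i)^{2s}B^{\ell s^2}}.
\]
The remaining work is to absorb the exponents $s$ into $t$: apply \eqref{eq:ql-1} to replace $q_\ell(\bm a_i)^{-2s}$ by $q_\ell(\bm a_i)^{-2t}\cdot 2^{-2t}$ (and drop the $B^{\ell s^2}\ge 1$ factor), then use Proposition~\ref{p:basic property}(2) to get $q_{k+\ell_0+p\ell}(\tilde{\bm u},\bm a_1,\dots,\bm a_p)\le 2^{p}q_{k+\ell_0}(\tilde{\bm u})\prod q_\ell(\bm a_i)$ and Proposition~\ref{p:basic property}(3) to get $|I_{k+\ell_0}(\tilde{\bm u})|\ge 1/(2q_{k+\ell_0}(\tilde{\bm u})^2)$. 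The $2^{-2t}$ factors from \eqref{eq:ql-1} are exactly what absorb the $2^{p}$ coming from Proposition~\ref{p:basic property}(2), giving the clean bound.

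For part (2), the $\mu_1$-mass of the fundamental interval is, directly from \eqref{eq:mu-1},
\[
\mu_1(\mathcal I_1)=2B^{-nt}\prod_{i=1}^{m}\frac{1}{q_\ell(\bm a_i)^{2s}B^{\ell s^2}},
\]
since $\lm^{(1)}_{\bm a_1,\dots,\bm a_m,b}$ is the normalized Lebesgue measure on $\mathcal I_1$. Combining the lower bound $|\mathcal I_1|^{t}\ge 32^{-t}q_n(\tilde{\bm u},\bm a_1,\dots,\bm a_m)^{-2t}B^{-nt(1+t)}$ from \eqref{eq:lowerI} with the upper bound $|I_{k+\ell_0}(\tilde{\bm u})|^t\le q_{k+\ell_0}(\tilde{\bm u})^{-2t}$ and the estimate $q_n\le 2^{m}q_{k+\ell_0}\prod q_\ell(\bm a_i)$, I form the ratio $\mu_1(\mathcal I_1)\,|I_{k+\ell_0}|^t/|\mathcal I_1|^t$: the $q_{k+\ell_0}$ factors cancel outright, and applying \eqref{eq:ql-1} in the form $\prod q_\ell(\bm a_i)^{-2(s-t)}\le 2^{-2mt}$ cancels the factor $4^{mt}$ introduced by Proposition~\ref{p:basic property}(2). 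This collapses the ratio to
\[
\frac{\mu_1(\mathcal I_1)\,|I_{k+\ell_0}|^t}{|\mathcal I_1|^t}\le 2\cdot 32^{t}\cdot B^{nt^2-m\ell s^2}.
\]

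The principal obstacle — and the only step that uses the global parameter constraint — is showing $B^{nt^2-m\ell s^2}\le 32^{1-t}$, i.e., $m\ell(s^2-t^2)\ge (k+\ell_0)t^{2}-(1-t)\log_{B}32$. Here I invoke the standing assumption $n-k\ge \ell kt/\epsilon$, which gives $m\ge kt/\epsilon-1$, together with $s>t+\epsilon$, so that $s^2-t^2=(s-t)(s+t)>\epsilon(s+t)>2\epsilon t$. Multiplying out yields $m\ell(s^2-t^2)>2k\ell t^{2}$, which comfortably dominates $(k+\ell_0)t^{2}$ for all $k,\ell\ge 1$ with $\ell_0<\ell$. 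Feeding this back gives the constant $2\cdot 32^{t}\cdot 32^{1-t}=64$ as required. The bookkeeping of multiplicative constants (making sure the $2^{-2t}$ gain from \eqref{eq:ql-1} exactly cancels $2^{p}$ in (1), and $2^{-2mt}$ cancels $4^{mt}$ in (2)) is the only delicate part of the proof; the rest is routine application of the cylinder-length and convergent-denominator estimates.
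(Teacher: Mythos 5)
Your proposal follows the paper's argument closely: compute the $\mu_1$-mass directly from \eqref{eq:mu-1} with \eqref{eq:defs-1} summing out the remaining blocks, then convert the exponent $s$ to $t$ via \eqref{eq:ql-1}, use Proposition~\ref{p:basic property}(2) to relate $\prod q_\ell(\bm a_i)$ to $q_{k+\ell_0+p\ell}$ (resp. $q_n$), and close with the cylinder-length bound and \eqref{eq:lowerI}. That is the same strategy as the paper's. However there are two slips worth flagging.

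First, in part (1), the cylinder-length inequality you invoke from Proposition~\ref{p:basic property}(3) is in the wrong direction. To pass from the bound $\prod_{i=1}^p q_\ell(\bm a_i)^{-2s}\le q_{k+\ell_0}(\tilde{\bm u})^{2t}\,q_{k+\ell_0+p\ell}(\tilde{\bm u},\bm a_1,\dots,\bm a_p)^{-2t}$ to the stated conclusion, you need $q_{k+\ell_0}(\tilde{\bm u})^{2t}\le |I_{k+\ell_0}(\tilde{\bm u})|^{-t}$, which comes from the \emph{upper} bound $|I_{k+\ell_0}(\tilde{\bm u})|\le q_{k+\ell_0}(\tilde{\bm u})^{-2}$, not the lower bound $|I_{k+\ell_0}(\tilde{\bm u})|\ge 1/(2q_{k+\ell_0}(\tilde{\bm u})^2)$ that you cite.

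Second, and more substantively, in part (2) the step ``$m\ge kt/\epsilon-1$ and $s^2-t^2>2\epsilon t$ give $m\ell(s^2-t^2)>2k\ell t^2$'' silently drops the $-1$. Multiplying out honestly yields only
\[
m\ell(s^2-t^2)>\bigl(kt/\epsilon-1\bigr)\,\ell\cdot 2\epsilon t = 2k\ell t^2 - 2\ell\epsilon t,
\]
and the shortfall $2\ell\epsilon t$ need not be absorbed by the $(1-t)\log_B 32$ slack you invoke: for $k$ small (say $k=1$) and $\epsilon$ close to $s^*-t$, the inequality $m\ell(s^2-t^2)\ge (k+\ell_0)t^2$ can fail. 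The paper sidesteps this by asserting $m\ge kt/\epsilon$ (without the $-1$), in which case the chain in \eqref{eq:ml>n-1} gives $m\ell s^2\ge nt^2$ with no slack needed. To make your version airtight you would need either to shrink $\epsilon$ further at the outset (e.g. $\epsilon<t/3$) so the $2\ell\epsilon t$ term is dominated, or to tighten the standing hypothesis on $n$ so that $m\ge kt/\epsilon$ holds exactly. As written, this step is a genuine gap.
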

\begin{proof}
	(1) The conclusion is clear if $p=0$. Now suppose that $p\ne 0$. By the definition of $ \mu_1 $, one has
	\begin{align}
		&\mu_1(I_{k+\ell_0+p\ell}(\tilde{\bm u},\bm a_1,\dots,\bm a_p))\notag\\
		=&\prod_{i=1}^p\frac{1}{q_\ell(\bm a_i)^{2s}B^{\ell s^2}}\le \prod_{i=1}^p\frac{1}{q_\ell(\bm a_i)^{2s}}\overset{\eqref{eq:ql-1}}{\le} \prod_{i=1}^p\frac{1}{(2q_\ell(\bm a_i))^{2t}}\notag\\
		 \le& \frac{1}{2^{2t} q_{p\ell}(\bm a_1,\dots,\bm a_p)^{2t}}= \frac{q_{k+\ell_0}(\tilde{ \bm u})^{2t}}{q_{k+\ell_0}(\tilde {\bm u})^{2t}}\cdot\frac{1}{2^{2t} q_{p\ell}(\bm a_1,\dots,\bm a_p)^{2t}}\notag\\
		\le&\frac{q_{k+\ell_0}(\tilde {\bm u})^{2t}}{q_{k+\ell_0+p \ell}(\tilde {\bm u},\bm a_1,\dots,\bm a_p)^{2t}}\notag \\
	\le&\frac{q_{k+\ell_0+p \ell}(\tilde {\bm u},\bm a_1,\dots,\bm a_p)^{-2t}}{|I_{k+\ell_0}(\tilde {\bm u})|^t}\label{eq:qkl-1}.
	\end{align}

	(2) Recall that $n=k+\ell_0+m\ell$. It should be noticed that \eqref{eq:qkl-1} is actually an upper bound of $\prod_{i=1}^pq_\ell(\bm a_i)^{-2s}$. In the spirit of \eqref{eq:qkl-1}, by the definition of $ \mu_1$,
	\begin{align}
		\mu_1({\mathcal I_1}(\tilde{\bm u},\bm a_1,\dots,\bm a_m,b))
		=&\bigg(\prod_{i=1}^m\frac{1}{q_\ell(\bm a_i)^{2s}B^{\ell s^2}}\bigg)\cdot \frac{2}{B^{nt}}\notag\\
		\le &\frac{q_{k+\ell_0+m \ell}(\tilde {\bm u},\bm a_1,\dots,\bm a_m)^{-2t}}{|I_{k+\ell_0}(\tilde {\bm u})|^t}\cdot \frac{1}{B^{m\ell s^2} }\cdot \frac{2}{B^{nt}}.\notag
	\end{align}
	Since $m\ge kt/\epsilon$ (which follows from the choice of $n$), one has
	\begin{equation}\label{eq:ml>n-1}
		\begin{split}
			m\ell s^2&\ge m\ell(t+\epsilon)^2 \ge m\ell t^2+2m\ell t\epsilon\ge  m\ell t^2+2 \ell kt^2\\
			&\ge m\ell t^2+(\ell+k)t^2\ge nt^2.
		\end{split}
	\end{equation}
  Therefore,
\begin{align}
	\mu_1({\mathcal I_1}(\tilde{\bm u},\bm a_1,\dots,\bm a_m,b)) & \le \frac{q_{k+\ell_0+m \ell}(\tilde {\bm u},\bm a_1,\dots,\bm a_m)^{-2t}}{|I_{k+\ell_0}(\tilde {\bm u})|^t}\cdot \frac{1}{B^{n t^2} }\cdot \frac{2}{B^{nt}}\notag\\
	&=\frac{1}{|I_{k+\ell_0}(\tilde {\bm u})|^t}\cdot\frac{2}{q_{n}(\tilde {\bm u},\bm a_1,\dots,\bm a_m)^{2t}B^{nt(1+t)}}\label{eq:b<-1}\\
	& \overset{\eqref{eq:lowerI}}{\le} \frac{64 |\mathcal I_1(\tilde{\bm u},\bm a_1,\dots,\bm a_m,b) |^t}{|I_{k+\ell_0}(\tilde {\bm u})|^t}\notag.\qedhere
\end{align}
\end{proof}

\begin{lem}\label{l:holder1}
	Let $ \mu_1$ be as in \eqref{eq:mu-1}. For any $ r>0 $ and $ x\in[0,1] $, we have
	\[\mu_1(B(x,r))\le \frac{16(M+2)^4(M+1)^{2\ell}r^t}{|I_{k+\ell_0}(\tilde {\bm u})|^t}.\]
\end{lem}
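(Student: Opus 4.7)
The estimate is a standard H\"older-continuity bound for a Cantor-like mass distribution, and my plan is to establish it by a scale analysis that matches $r$ to the cylindrical hierarchy underlying the construction of $\mu_1$. First I will dispose of the trivial regime $r\ge|I_{k+\ell_0}(\tilde{\bm u})|$ by using $\mu_1(B(x,r))\le 1\le r^t/|I_{k+\ell_0}(\tilde{\bm u})|^t$, so in what follows I assume $r<|I_{k+\ell_0}(\tilde{\bm u})|$. To locate the scale of $r$, I introduce the largest integer $p\in\{0,1,\ldots,m\}$ for which $B(x,r)\cap\mathrm{supp}(\mu_1)$ is contained in a single basic cylinder $I^\ast=I_{k+\ell_0+p\ell}(\tilde{\bm u},\bm a_1,\ldots,\bm a_p)$; since $p=0$ always works, $p$ is well defined.

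Case 1 ($p<m$): the maximality of $p$ forces $B(x,r)$ to touch fundamental intervals belonging to at least two distinct level-$(k+\ell_0+(p+1)\ell)$ sub-cylinders of $I^\ast$. Lemma~\ref{l:pro En-1}(1) then gives a lower bound $|I_{k+\ell_0+(p+1)\ell}|/(2(M+2)^4)$ for the separation between such fundamental intervals, which in turn forces the smaller of any two hit sub-cylinders to have diameter $\le 4(M+2)^4 r$. I will combine this with the mass bound $\mu_1(I_{k+\ell_0+(p+1)\ell})\le 2|I_{k+\ell_0+(p+1)\ell}|^t/|I_{k+\ell_0}(\tilde{\bm u})|^t$ coming from Lemma~\ref{l:meamu}(1) (using $|I|\ge 1/(2q^2)$), and note that $I^\ast$ contains at most $M^\ell\le(M+1)^\ell$ such sub-cylinders, at most one of which can be ``large'' (i.e., $>4(M+2)^4 r$) by the separation argument. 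The single possibly-large sub-cylinder is treated by combining $\mu_1(I_{k+\ell_0+(p+1)\ell})\le 2|I^\ast|^t/|I_{k+\ell_0}(\tilde{\bm u})|^t$ with the ratio bound $|I^\ast|\le 4(M+1)^{2\ell}|I_{k+\ell_0+(p+1)\ell}|$ derived from Proposition~\ref{p:basic property}(2).

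Case 2 ($p=m$): now $B(x,r)\cap\mathrm{supp}(\mu_1)\subset I^\ast$, whose support is a finite disjoint union of fundamental intervals $\mathcal I_1(\tilde{\bm u},\bm a_1,\ldots,\bm a_m,b)$, each carrying a constant multiple of Lebesgue measure. For each $\mathcal I_1$ met by $B(x,r)$ I split according to $r\ge|\mathcal I_1|$ (use Lemma~\ref{l:meamu}(2) to bound by $\mu_1(\mathcal I_1)\le 64|\mathcal I_1|^t/|I_{k+\ell_0}(\tilde{\bm u})|^t\le 64r^t/|I_{k+\ell_0}(\tilde{\bm u})|^t$) versus $r<|\mathcal I_1|$ (use Lebesgue uniformity to get $(2r/|\mathcal I_1|)\mu_1(\mathcal I_1)\le 128r^t/|I_{k+\ell_0}(\tilde{\bm u})|^t$, since $(r/|\mathcal I_1|)^{1-t}<1$). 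The number of fundamental intervals met by $B(x,r)$ will be controlled by Lemma~\ref{l:pro En-1}(2); when this count is forced to be large, $r$ is already comparable to $|I^\ast|$ and I close the estimate by the blunt bound $\mu_1(B(x,r))\le\mu_1(I^\ast)\le 2|I^\ast|^t/|I_{k+\ell_0}(\tilde{\bm u})|^t\le 2r^t/|I_{k+\ell_0}(\tilde{\bm u})|^t$ via Lemma~\ref{l:meamu}(1).

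The main obstacle I anticipate is the bookkeeping of constants across these nested cases, in particular the treatment of the single possibly-large sub-cylinder in Case~1: this is precisely where the factor $(M+1)^{2\ell}$ in the final constant enters, via the cylinder-ratio bound $|I_{k+\ell_0+p\ell}|/|I_{k+\ell_0+(p+1)\ell}|\le 4(M+1)^{2\ell}$ derived from Proposition~\ref{p:basic property}(2); the $(M+2)^4$ factor comes from Lemma~\ref{l:pro En-1}(1), and the numerical factor $16$ absorbs the remaining small constants from Lemmas~\ref{l:meamu} and~\ref{l:pro En-1}(2).
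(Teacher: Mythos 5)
There is a genuine gap in your Case~2, in the regime that corresponds to the paper's ``Case~2'' (where $B(x,r)$ sits inside one basic $n$-level cylinder $I^\ast = I_n(\tilde{\bm u},\bm a_1,\dots,\bm a_m)$ but straddles several level-$(n+1)$ basic cylinders). Your claim ``when this count is forced to be large, $r$ is already comparable to $|I^\ast|$'' is false: the ball can meet \emph{all} $\sim B^{nt}/2$ admissible $(n+1)$-cylinders while $r$ is only of size $\sim q_n^{-2}B^{-nt}$, which is a factor $B^{nt}$ \emph{smaller} than $|I^\ast|\asymp q_n^{-2}$. Consequently the ``blunt bound'' you propose,
$\mu_1(B(x,r))\le\mu_1(I^\ast)\le 2|I^\ast|^t/|I_{k+\ell_0}(\tilde{\bm u})|^t\le 2r^t/|I_{k+\ell_0}(\tilde{\bm u})|^t$,
does not close: the last step requires $|I^\ast|\lesssim r$, which is precisely the opposite of what holds here. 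The preceding ``per fundamental interval $\lesssim r^t$'' estimate also does not help, because one must then multiply by the count $N$, and with $N\asymp r\,q_n^2 B^{2nt}$ one only gets $N\mu_1(\mathcal I_1)\lesssim r\,q_n^{2(1-t)}B^{nt(1-t)}/|I_{k+\ell_0}(\tilde{\bm u})|^t$, which dominates $r^t/|I_{k+\ell_0}(\tilde{\bm u})|^t$ once $r\gtrsim q_n^{-2}B^{-nt}$.

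The missing idea is the interpolation used in the paper: bound the count of hit $(n+1)$-cylinders by
$\#\Delta_1(x)\le\min\{B^{nt}/2,\ 64\,r\,q_n^2 B^{2nt}\}\le (B^{nt}/2)^{1-t}(64\,r\,q_n^2 B^{2nt})^t$,
and pair it with the sharp per-cylinder measure bound $\mu_1(\mathcal I_1)\le 2\,q_n^{-2t}B^{-nt(1+t)}/|I_{k+\ell_0}(\tilde{\bm u})|^t$ coming from \eqref{eq:b<-1}. The resulting powers of $B$ and $q_n$ cancel exactly, leaving $\lesssim r^t/|I_{k+\ell_0}(\tilde{\bm u})|^t$. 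Any variant of your argument that keeps only $\mu_1(\mathcal I_1)\le 64|\mathcal I_1|^t/|I_{k+\ell_0}(\tilde{\bm u})|^t$ or only Lemma~\ref{l:meamu}(1) in its stated form discards the $B^{-m\ell s^2}$ factor and cannot reach the required exponent. Your Case~1 (which is the paper's Case~3) is fine as sketched, and in fact offers a slightly different but valid route (at most one ``large'' sub-cylinder, the rest bounded individually) compared with the paper's direct use of $\mu_1(I_{k+\ell_0+p\ell})\le q_{k+\ell_0+p\ell}^{-2t}/|I_{k+\ell_0}(\tilde{\bm u})|^t$ followed by the cylinder-ratio bound; but that does not repair the Case~2 gap.
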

\begin{proof}
	Without loss of generality, assume that $ x\in{\mathcal I_1}(\tilde{\bm u},\bm a_1,\dots,\bm a_m,b) $. Obviously, if $ r\ge|I_{k+\ell_0}(\tilde{\bm u})| $, then
	\[\mu_1 (B(x,r))=1\le \frac{r^{t}}{|I_{k+\ell_0}(\tilde{\bm u})|^{t}}.\]
	Hence, it is sufficient to focus on the case $ r<|I_{k+\ell_0}(\tilde{\bm u})| $. Lemma \ref{l:pro En-1} suggests that we need to consider three cases.

	\noindent \textbf{Case 1}: $r\le|I_{n+1}(\tilde{\bm u},\bm a_1,\dots,\bm a_m,b)|/32$. By Lemma \ref{l:pro En-1}, we see that the distance between ${\mathcal I_1}(\tilde{\bm u},\bm a_1,\dots,\bm a_m,b)$ and other fundamental intervals contained in $F_n\cap I_{k+\ell_0}(\tilde{\bm u})$ is at least $r$. So $B(x,r)$ only intersects the fundamental interval ${\mathcal I_1}(\tilde{\bm u},\bm a_1,\dots,\bm a_m,b)$ to which $x$ belongs. It follows that
	\[\begin{split}
		\mu_1(B(x,r))&=\bigg(\prod_{i=1}^m\frac{1}{q_\ell(\bm a_i)^{2s}B^{\ell s^2}}\bigg)\cdot \frac{2}{B^{nt}}\cdot \lm_{\bm a_1,\dots,\bm a_m,b}^{(1)}(B(x,r))\\
		&=\mu_1({\mathcal I_1}(\tilde{\bm u},\bm a_1,\dots,\bm a_m,b))\cdot \lm_{\bm a_1,\dots,\bm a_m,b}^{(1)}(B(x,r)).
	\end{split}\]
	Since
	\[\begin{split}
		\lm_{\bm a_1,\dots,\bm a_m,b}^{(1)}(B(x,r))&\le\min\biggl\{1,\frac{2r}{|{\mathcal I_1}(\tilde{\bm u},\bm a_1,\dots,\bm a_m,b)|}\biggr\}\le \frac{2r^t}{|{\mathcal I_1}(\tilde{\bm u},\bm a_1,\dots,\bm a_m,b)|^t},
	\end{split}\]
	where the last inequality follows from $\min\{a, c\} \leq a^t c ^{1-t}$ for any $t \in [0,1]$.
	 By Lemma \ref{l:meamu} (2), we have
	\[\begin{split}
		\mu_1(B(x,r))
		\leq  & \frac{64|{\mathcal I_1}(\tilde{\bm u},\bm a_1,\dots,\bm a_m,b)|^t}{|I_{k+\ell_0}(\tilde {\bm u})|^t}\cdot \frac{2r^t}{|{\mathcal I_1}(\tilde{\bm u},\bm a_1,\dots,\bm a_m,b)|^t}\\
		= & \frac{128r^t}{|I_{k+\ell_0}(\tilde {\bm u})|^t}.
	\end{split}\]

	\noindent \textbf{Case 2}: $|I_{n+1}(\tilde{\bm u},\bm a_1,\dots,\bm a_m,b)|/32< r\le|I_{n}(\tilde{\bm u},\bm a_1,\dots,\bm a_m)|/(2(M+2)^4)$. In this case, the ball $B(x,r)$ intersects exactly the $n$th level basic cylinder  $I_{n}(\tilde{\bm u},\bm a_1,\dots,\bm a_m)$, but may intersect multiple $(n+1)$th level basic cylinders inside it. Therefore,
	by the definition of $\mu_1$ and \eqref{eq:b<-1},
	\begin{align}
		\mu_1(B(x,r))
		\leq& \#\Delta_1(x)\cdot\max_{B^{nt}\le b\le 2B^{nt}\atop b\text{ is even}} \mu_1({\mathcal I_1}(\tilde{\bm u},\bm a_1,\dots,\bm a_m,b))\notag\\
		\le &\#\Delta_1(x)\cdot \frac{1}{|I_{k+\ell_0}(\tilde {\bm u})|^t}\cdot\frac{2}{q_n(\tilde {\bm u},\bm a_1,\dots,\bm a_m)^{2t}B^{nt(1+t)}},\label{eq:mu1<Delta1}
	\end{align}
	where
	\[\Delta_1(x)=\{B^{nt}\le b\le 2B^{nt}: \text{$b$ is even and } I_{n+1}(\tilde{\bm u},\bm a_1,\dots,\bm a_m,b)\cap B(x,r)\ne\emptyset\}.\]
	In order to get the best upper bound for $ \mu (B(x,r)) $, we need to use two methods to bound $ \#\Delta_1(x) $ from above. First, there are at most $B^{nt}/2$ choices for $b$ and so
	\begin{equation}\label{eq:Del-1}
		\#\Delta_1(x)\le B^{nt}/2.
	\end{equation}

	On the other hand, each $(n+1)$th level basic cylinder $I_{n+1}(\tilde{\bm u},\bm a_1,\dots,\bm a_m,b)$ is of length at least
	\[2^{-1}q_{n+1}(\tilde{\bm u},\bm a_1,\dots,\bm a_m,b)^{-2}\ge 32^{-1}q_{n}(\tilde{\bm u},\bm a_1,\dots,\bm a_m)^{-2}B^{-2nt},\]
	which means that
	\[\#\Delta_1(x)\le 2r\cdot 32 q_{n}(\tilde{\bm u},\bm a_1,\dots,\bm a_m)^{2}B^{2nt}.\]
	This together with \eqref{eq:Del-1} gives
		\[\begin{split}
		\# 	\Delta_1(x)&\le\min\{B^{nt}/2, 2r\cdot 32 q_n(\tilde{\bm u},\bm a_1,\dots,\bm a_m)^2B^{2nt}\}\\
			&\leq 64 B^{nt(1-t)}\cdot( r\cdot q_n(\tilde{\bm u},\bm a_1,\dots,\bm a_m)^{2}B^{2nt})^t\\
			&=64 B^{nt(1+t)}\cdot r^t\cdot q_n(\tilde{\bm u},\bm a_1,\dots,\bm a_m)^{2t}.
		\end{split}\]
	Substituting this upper bound for $\#\Delta_1(x)$ into \eqref{eq:mu1<Delta1}, we get
	\[\begin{split}
		\mu_1(B(x,r))
		\le &64 B^{nt(1+t)}\cdot r^t\cdot q_n(\tilde{\bm u},\bm a_1,\dots,\bm a_m)^{2t}\cdot \frac{1}{|I_{k+\ell_0}(\tilde {\bm u})|^t}\\
&\hspace{2em}	\cdot	\frac{2}{q_n(\tilde {\bm u},\bm a_1,\dots,\bm a_m)^{2t}B^{nt(1+t)}}\\
		=& 128\frac{r^t}{|I_{k+\ell_0}(\tilde {\bm u})|^t}.
	\end{split}\]

	\noindent \textbf{Case 3}: $|I_{k+\ell_0+(p+1) \ell}(\tilde{\bm u},\bm a_1,\dots,\bm a_{p+1})|/(2(M+2)^4) \le r< | I_{k+\ell_0+p\ell}(\tilde{\bm u},\bm a_1,\dots,\bm a_p)|/(2($ $M+2)^4) $ for some $1\le p\le m-1$.  In this case, the ball $B(x,r)$ only intersects one $(k+\ell_0+p \ell)$th level basic cylinder, i.e. $I_{ k+\ell_0+p \ell}(\tilde{\bm u},\bm a_1,\dots,\bm a_p)$. Hence, by Lemma \ref{l:meamu} (1), we get
	\[\begin{split}
		\mu_1(B(x,r))&\le\mu_1(I_{k+\ell_0+p\ell}(\tilde{ \bm u},\bm a_1,\dots,\bm a_p))  \\
   & \le \frac{q_{k+\ell_0+p \ell}(\tilde{\bm u},\bm a_1,\dots,\bm a_{p})^{-2t}}{| I_{k+\ell_0}(\tilde{\bm u})|^t} \\
   & = \frac{q_{k+\ell_0+p \ell}(\tilde{\bm u},\bm a_1,\dots,\bm a_{p})^{-2t} }{| I_{k+\ell_0}(\tilde{\bm u})|^t}\cdot \frac{q_{\ell} (\bm a_{p+1})^{-2t}}{q_{\ell} (\bm a_{p+1})^{-2t}}\\
   & \le \frac{4q_{k+\ell_0+(p+1) \ell}(\tilde{\bm u},\bm a_1,\dots,\bm a_{p+1})^{-2t}}{| I_{k+\ell_0}(\tilde{\bm u})|^t }\cdot (M+1)^{2\ell}\\
   & \le \frac{ 16(M+2)^4(M+1)^{2\ell} r^t}{| I_{k+\ell_0}(\tilde{\bm u})|^t }.
	\end{split}\]

	Combining the estimation given in Cases 1--3, we arrive at the conclusion.
\end{proof}

\begin{proof}[Completing the proof of Theorem \ref{t:main}]
	Recall that $\tilde{ \bm u}=(\bm u, 1^{\ell_0}) \in\N^{k+\ell_0}$. By Proposition \ref{p:basic property} (1), a simple calculation shows that
	\[\frac{|I_{k+\ell_0}(\tilde{\bm u})|^t}{|I_k(\bm u)|^t}\ge\frac{ q_{k+\ell_0}(\tilde{ \bm u})^{-2t}}{2^{2t}\cdot q_{k}({ \bm u})^{-2t}}\ge \frac{1}{2^{4t}\cdot q_{\ell_0}(1^{\ell_0})^{2t}}\ge \frac{1}{2^{4t}\cdot 2^{t\ell_0}}\ge \frac{1}{2^{\ell+4}}.\]
	Therefore, for any $\bm u\in\N^k$, by Lemma \ref{l:holder1} and mass distribution principle, we have
	\[\begin{split}
		\mathcal H_\infty^t(F_n\cap I_{k}(\bm u))&\ge \mathcal H_\infty^t(F_n\cap I_{k+\ell_0}(\tilde{\bm  u}))\\
		&\ge \frac{1}{16(M+2)^4(M+1)^{2\ell}}  |I_{k+\ell_0}(\tilde{\bm  u})|^t  \mu_1 (F_n\cap I_{k+\ell_0}(\tilde{\bm  u}))\\
		&\ge \frac{1}{2^{\ell+8}(M+2)^4(M+1)^{2\ell}} |I_{k}(\bm u)|^t\\
		&\ge \frac{1}{2^{\ell+8}(M+2)^4(M+1)^{2\ell}} |I_{k}(\bm u)|,
	\end{split}\]
	where the last inequality follows from $t<s^*\le 1$.
	Since $\ell$ and $M$ depend on $t$ only, and since the above Hausdorff content bound hold for infinitely many $n\in\N$, by Lemma \ref{l:LIP} we have
	\[\hdim E_2(\{z_n\}_{n\ge 1},B)\ge s^*.\qedhere\]
\end{proof}
\subsection{Case II: $s^*=\limsup\limits_{n \to \infty} s_n$ is obtained along a subsequence of $\{s_{n,2}\}_{n\ge 1}$}\label{s:2}
In this case, there exist infinitely many $n$ such that
\begin{equation}\label{eq:sn-2}
	s_n=s_{n,2}>t.
\end{equation}
For such $n$, by the definition of $s_n$,
\[s_{n,1}> s_{n,2}\ge s_{n,3},\]
which by Lemma \ref{l:a1(zn)>} (2) and (4) gives
\begin{equation}\label{eq:a1zn<2}
	B^{ns_{n,2}/2}\le a_1(z_n) < B^{ns_{n,2}}.
\end{equation}
By taking a subsequence, assume that integers satisfying \eqref{eq:sn-2} will ensure the existence of the following limit
 \[(s^* \log B)/2\le\lim_{n\text{ satisfies \eqref{eq:sn-2}}\atop n\to\infty}\frac{\log a_1(z_n)}{n}=: \alpha\le s^* \log B.\]
Recall the definition of $s_{n,2}$ and using the continuity of pressure functions, the above discussion gives
\begin{equation}\label{pressure2}
s^*=\inf\{s\in [0,1]:P(T, -s\log|T'|-s\log B+(1-s)\alpha)\le 0\}.
\end{equation}

Since most of the arguments in this subsection are quite identical to the last subsection, we will follow the same notations when there is no risk of ambiguity. Besides, to keep the paper in a managable length, some proofs will not be detailed here if they are similar to those in Section \ref{s:1}. Instead, we will present only the main ideas.

Fix an $\epsilon<s^*-t$ . By Proposition \ref{p:property of s}, there exist integers $ \ell \ge 2t/\epsilon +1$ and $ M $ such that the unique positive number $s=s(\ell,M)$ satisfying the equation
\begin{equation}\label{eq:defs-2}
	\sum_{\bm a\in\{1,\dots,M \}^\ell}\frac{e^{\alpha\ell (1-s)}}{q_\ell(\bm a)^{2s}B^{\ell s}}=1,
\end{equation}
is greater than $t+\epsilon$.

Choose a large integer $n\in \N$ so that \eqref{eq:sn-2} is satisfied and
\begin{equation}\label{eq:condn2}
	n-k\ge \ell,\quad e^{n(\alpha-\epsilon)}\le a_1(z_n)\le e^{n(\alpha+\epsilon)}.
\end{equation}
Write $n-k=m\ell+\ell_0$, where $0\le \ell_0<\ell$.

Analogously, let $\tilde{\bm u}=(\bm u,1^{\ell_0})\in\N^{k+\ell_0}$ and consider the following set defined by $(n+1)$th level cylinders:
\[\begin{split}
	\{I_{n+1}(\tilde {\bm u},\bm a_1,\dots,\bm a_m,c): ~&\bm a_i\in\{1,\dots,M\}^\ell, 1\le i\le m,\\
	& 2e^{(\alpha+\epsilon)n}\le c \le 3e^{(\alpha+\epsilon)n} \text{ and $c$ is even}\}.
\end{split}\]
Similar to Section 4.1, we restrict $c$ to be even for the only reason that any two cylinders in above set are well seperated (see Lemma \ref{l:pro En-2} below). By \eqref{eq:Jn3>1}, there is an interval, denoted by $\mathcal I_2(\tilde{\bm u},\bm a_1,\dots,\bm a_m,c)$ such that
\begin{equation}\label{eq:interval-2}
	\mathcal I_2(\tilde{\bm u},\bm a_1,\dots,\bm a_m,c)\subset F_n\cap I_{n+1}(\tilde{\bm u},\bm a_1,\dots,\bm a_m,c).
\end{equation}
Moreover, the length of this interval can be estimated as
\begin{align}
	|\mathcal I_2(\tilde{\bm u},\bm a_1,\dots,\bm a_m,c)|
	&\ge \frac{a_1(z_n)}{16q_{n}(\tilde{\bm u},\bm a_1,\dots,\bm a_m)^2 c |a_1(z_n)-c|B^n} \notag\\
	& \ge \frac{1}{16\cdot 3 e^{2n\epsilon} q_n(\tilde{\bm u},\bm a_1,\dots,\bm a_m)^2cB^{n}} \label{lowerdist-2},
\end{align}
where the last inequality follows from $c\le 3e^{(\alpha+\epsilon)n}$ and $ e^{n(\alpha-\epsilon)}\le a_1(z_n)\le e^{n(\alpha+\epsilon)} $ (see \eqref{eq:condn2}).

Define a probability measure $\mu_2$ supported on $F_n\cap I_{k+\ell_0}(\tilde{\bm u})\subset F_n\cap I_k(\bm u)$ as follows:
\[\mu_2=\sum_{\bm a_1\in\{1,\dots,M \}^\ell}\cdots\sum_{\bm a_m\in\{1,\dots,M \}^\ell}\sum_{2e^{n(\alpha+\epsilon)}\le c\le 3e^{n(\alpha+\epsilon)}\atop c\text{ is even}}\bigg(\prod_{i=1}^m\frac{e^{\ell \alpha(1-s)}}{q_\ell(\bm a_i)^{2s}B^{\ell s}}\bigg)\cdot \frac{2}{e^{n(\alpha+\epsilon)}}\cdot\lm^{(2)}_{\bm a_1,\dots,\bm a_m,c},\]
where
\[\lm^{(2)}_{\bm a_1,\dots,\bm a_m,c}:=\frac{\lm|_{\mathcal I_2(\tilde{\bm u},\bm a_1,\dots,\bm a_m,c)}}{\lm(\mathcal I_2(\tilde{\bm u},\bm a_1,\dots,\bm a_m,c))}.\]

The next two lemmas describe the gap between fundamental intervals defined in \eqref{eq:interval-2} and their $\mu_2$-measures, respectively. The first one follows the same lines as the proof of Lemma \ref{l:pro En-1}.
\begin{lem}\label{l:pro En-2}
	Let $ \mathcal I_2=\mathcal I_2(\tilde{\bm u},\bm a_1,\dots,\bm a_m,c) $ and $ \mathcal I'_2=\mathcal I_2(\tilde{\bm u},\bm a_1',\dots,\bm a_m',c')$ be two intervals defined in \eqref{eq:interval-2}. Then, the following statements hold:
	\begin{enumerate}[\upshape(1)]
		\item If $ (\bm a_1,\dots,\bm a_{p-1})= (\bm a_1',\dots,\bm a_{p-1}') $ but $ \bm a_p \ne \bm a_p' $ for some $ 1\le p\le m $, then
		\[
		\dist(\mathcal I_2,\mathcal I_2')\ge\frac{|I_{k+\ell_0+p \ell}(\tilde{\bm u},\bm a_1,\dots,\bm a_p)|}{2(M+2)^4}
		\]
		\item If $ (\bm a_1,\dots,\bm a_m)=(\bm a_1',\dots,\bm a_m') $ but $c\ne c'$, then
		\[\dist(\mathcal I_2,\mathcal I_2')\geq \frac{1}{18}|I_{n+1}(\tilde{\bm u},\bm a_1,\dots,\bm a_m,c)|.\]
	\end{enumerate}
\end{lem}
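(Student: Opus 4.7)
The plan is to mirror the two-part strategy of Lemma \ref{l:pro En-1}, locating an intermediate cylinder that separates $\mathcal I_2$ from $\mathcal I_2'$ and bounding its length from below by means of Proposition \ref{p:basic property}.

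For (1), the argument is essentially identical to its counterpart. Writing $(\bm a_1,\dots,\bm a_m,c)=(c_1,\dots,c_{m\ell+1})$ and $(\bm a_1',\dots,\bm a_m',c')=(c_1',\dots,c_{m\ell+1}')$, the hypothesis forces the first index $i$ at which the two strings disagree to satisfy $(p-1)\ell<i\le p\ell$; in particular $i\le m\ell$, so $c_{i+1},c_{i+1}'\le M$. As in Case 1 of Lemma \ref{l:pro En-1}, one of $I_{k+\ell_0+i+1}(\tilde{\bm u},c_1,\dots,c_{i-1},c_i,M+1)$ or $I_{k+\ell_0+i+1}(\tilde{\bm u},c_1,\dots,c_{i-1},c_i',M+1)$ sits strictly between the two fundamental intervals, and iterating the estimates of Proposition \ref{p:basic property}(2) reproduces the bound $|I_{k+\ell_0+p\ell}(\tilde{\bm u},\bm a_1,\dots,\bm a_p)|/(2(M+2)^4)$ verbatim.

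For (2), assuming without loss of generality that $c<c'$ and using that both are even, I would pick an integer $c''$ with $c<c''<c'$ and note that $I_{n+1}(\tilde{\bm u},\bm a_1,\dots,\bm a_m,c'')$ lies between $\mathcal I_2$ and $\mathcal I_2'$. The key point is that $c''\le 3e^{n(\alpha+\epsilon)}\le \tfrac{3}{2}c$, because $c\ge 2e^{n(\alpha+\epsilon)}$; together with Proposition \ref{p:basic property}(2) this yields
\[
q_{n+1}(\tilde{\bm u},\bm a_1,\dots,\bm a_m,c'')^{2}\le 4(c'')^{2}\, q_n(\tilde{\bm u},\bm a_1,\dots,\bm a_m)^{2}\le 9c^{2}\, q_n(\tilde{\bm u},\bm a_1,\dots,\bm a_m)^{2}.
\]
Consequently the separating cylinder has length at least $1/(18\, c^{2} q_n^{2})$, while $|I_{n+1}(\tilde{\bm u},\bm a_1,\dots,\bm a_m,c)|\le q_{n+1}(\tilde{\bm u},\bm a_1,\dots,\bm a_m,c)^{-2}\le 1/(c^{2} q_n^{2})$; comparing these two estimates delivers the advertised factor $1/18$.

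I do not expect any genuine obstacle here: the only substantive departure from Lemma \ref{l:pro En-1}(2) is that the admissible range of the terminal parameter $c$ has ratio $3/2$ (between the upper and lower bounds $3e^{n(\alpha+\epsilon)}$ and $2e^{n(\alpha+\epsilon)}$) rather than the ratio $2$ that appeared for $b$ in the previous case, which is precisely what propagates into the constant $1/18$ in place of $1/32$.
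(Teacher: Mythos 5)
Your overall strategy is the same as the paper's (which in fact only says Lemma~\ref{l:pro En-2} ``follows the same lines as the proof of Lemma~\ref{l:pro En-1}'' and gives no details), and part~(2) is carried out correctly: the factor $1/18$ does indeed arise from $c''+1\le 2c''$, $c''\le\tfrac32 c$, and $|I_{n+1}(\dots,c'')|\ge \tfrac{1}{2q_{n+1}^2}$, exactly as you compute.

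There is, however, a genuine (if small) gap in your part~(1). The step ``$i\le m\ell$, so $c_{i+1},c_{i+1}'\le M$'' is false when $i=m\ell$, which happens precisely when $p=m$ and $\bm a_m,\bm a_m'$ first disagree at their last coordinate: in that case $c_{i+1}=c$ and $c_{i+1}'=c'$ lie in $[2e^{n(\alpha+\epsilon)},3e^{n(\alpha+\epsilon)}]$, which is far larger than $M$. Consequently the appeal to ``Case 1 of Lemma~\ref{l:pro En-1}'' does not cover this situation. The paper's proof of Lemma~\ref{l:pro En-1} anticipates exactly this and treats $i=m\ell$ as a separate Case~2, taking the intermediate cylinder $I_{n+1}(\tilde{\bm u},c_1,\dots,c_{m\ell-1},c_{m\ell},1)$ or $I_{n+1}(\tilde{\bm u},c_1,\dots,c_{m\ell-1},c_{m\ell}',1)$ rather than the $M+1$ one; the same chain of inequalities then goes through because the length estimate only ever uses $c_i\le M$ (the $i$th coordinate, which is always a letter of some $\bm a_j$), not the $(i+1)$th. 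Your choice of $M+1$ can also be made to work for $i=m\ell$ (since $M+1<c,c'$ for $n$ large, so the $M+1$-cylinder is still disjoint from both fundamental intervals and by Proposition~\ref{C-D} one of the two candidate cylinders lies between them), but the justification you give does not establish this — you should either add the missing Case~2 or replace the erroneous ``so $c_{i+1},c_{i+1}'\le M$'' with the correct observation that only $c_i\le M$ is needed and that $M+1\ne c_{i+1},c_{i+1}'$ in every case.
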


Instead of giving complete proof of the following lemma, we merely point out which changes have to be made.
\begin{lem}\label{l:meamu2}
	\begin{enumerate}[\upshape(1)]
		\item For any $ (\tilde{\bm u},\bm a_1,\dots,\bm a_p) $ with $ 0\le p\le m $,
		\[\mu_2 (I_{k+\ell_0+p\ell}(\tilde{\bm u},\bm a_1,\dots,\bm a_p))\ll\frac{q_{k+\ell_0+p \ell}(\tilde{\bm u},\bm a_1,\dots,\bm a_p)^{-2t}}{|I_{k+\ell_0}(\tilde {\bm u})|^{t}}.\]
		\item For any $ (\tilde{\bm u},\bm a_1,\dots,\bm a_m,c) $,
		\[\mu_2(\mathcal I_2(\tilde{\bm u},\bm a_1,\dots,\bm a_m,c))\ll \frac{|\mathcal I_2(\tilde{\bm u},\bm a_1,\dots,\bm a_m,c)|^t}{|I_{k+\ell_0}(\tilde {\bm u})|^t }.\]
	\end{enumerate}
\end{lem}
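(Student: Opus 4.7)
The plan is to mimic Lemma \ref{l:meamu}, with the novelty being the extra factor $e^{\alpha\ell(1-s)}/B^{\ell s}$ in the per-level weight of $\mu_2$. Both items depend on arranging, via Proposition \ref{p:property of s}, that the solution $s$ of \eqref{eq:defs-2} lies close enough to $s^*$. The key additional requirement I would impose on $\ell$ and $M$ is that $s \ge \alpha/(\alpha+\log B)$; this is achievable because $\alpha \le s^*\log B$ and $s^* > 0$ force $\alpha/(\alpha+\log B) \le s^*/(1+s^*) < s^*$. With this in hand, $e^{\alpha\ell(1-s)} \le B^{\ell s}$ and each factor is dominated by $q_\ell(\bm a_i)^{-2s}$.

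For item (1), I would unfold the definition of $\mu_2$, summing out $(\bm a_{p+1},\ldots,\bm a_m,c)$ using \eqref{eq:defs-2}, to reach
\[
\mu_2\bigl(I_{k+\ell_0+p\ell}(\tilde{\bm u},\bm a_1,\ldots,\bm a_p)\bigr) = \prod_{i=1}^{p}\frac{e^{\alpha\ell(1-s)}}{q_\ell(\bm a_i)^{2s}B^{\ell s}}.
\]
After bounding each factor by $q_\ell(\bm a_i)^{-2s}$, the rest is verbatim Lemma \ref{l:meamu}(1): convert $s$ to $t$ via $q_\ell(\bm a_i)^{-2s} \le (2q_\ell(\bm a_i))^{-2t}$ (using $\ell \ge 2t/\epsilon+1$) and collapse the product via Proposition \ref{p:basic property}(2).

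For item (2), applying item (1) and then comparing to $|\mathcal I_2|^t$ would discard a useful decaying factor, so I would instead compute the ratio directly. Combining
\[
\mu_2(\mathcal I_2) = \Bigl(\prod_{i=1}^{m}\frac{e^{\alpha\ell(1-s)}}{q_\ell(\bm a_i)^{2s}B^{\ell s}}\Bigr)\cdot\frac{2}{e^{n(\alpha+\epsilon)}}
\]
with the lower bound $|\mathcal I_2|^t \gg q_n^{-2t}\,e^{-tn(\alpha+\log B)-3tn\epsilon}$ obtained from \eqref{lowerdist-2}, and noting that $\prod q_\ell(\bm a_i)^{-2s}\,q_n^{2t} \ll q_{k+\ell_0}^{2t}$ (which follows from $\ell \ge 2t/\epsilon+1$ exactly as in item (1)), the ratio collapses to $|I_{k+\ell_0}|^{-t}\,\exp\bigl(n[-(s-t)(\alpha+\log B) + \epsilon(3t-1)]\bigr)$. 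The main obstacle is non-positivity of the exponent: it does not follow from $s-t>\epsilon$ alone, but by pushing $s$ close to $s^*$ one obtains $s-t \ge (s^*-t)/2$, and then fixing $\epsilon$ small enough at the outset (say, $\epsilon|3t-1| < \frac{1}{4}(s^*-t)(\alpha+\log B)$) makes the exponent $\le -\frac{1}{4}n(s^*-t)(\alpha+\log B)$, so the ratio is uniformly bounded.
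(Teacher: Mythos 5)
Your proposal is correct and follows essentially the same route as the paper's proof. For item (1), you derive the needed inequality $e^{\alpha(1-s)}\le B^s$ directly from $\alpha\le s^*\log B$ (so $\alpha/(\alpha+\log B)\le s^*/(1+s^*)<s^*$ and Proposition \ref{p:property of s} lets you force $s\ge\alpha/(\alpha+\log B)$), whereas the paper reaches the same inequality by substituting $a_1(z_n)<B^{ns_{n,2}}$ into $e^\alpha\le e^\epsilon a_1(z_n)^{1/n}$ and invoking ``decreasing $\epsilon$''; your variant is cleaner and avoids tying the estimate to $\epsilon$. For item (2), both proofs reduce to the same exponential factor $e^{-n(s-t)(\alpha+\log B)}\cdot e^{O(n\epsilon)}$ after applying the $\prod q_\ell(\bm a_i)^{-2s}\le q_n^{-2t}/|I_{k+\ell_0}(\tilde{\bm u})|^t$ bound and the lower bound on $|\mathcal I_2|$; the paper disposes of the $e^{O(n\epsilon)}$ with the phrase ``by decreasing $\epsilon$ and increasing $s$ if necessary,'' while you correctly observe that $s-t>\epsilon$ alone does not suffice when $\log B$ is small, and make the needed quantitative relation between $\epsilon$ and $(s^*-t)(\alpha+\log B)$ explicit. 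This is a useful clarification of a step the paper glosses over, but it is the same argument.
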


\begin{proof}[Sketch proof]
	(1) Note that
	\[\mu_2(I_{k+\ell_0+p\ell}(\tilde{\bm u},\bm a_1,\dots,\bm a_p))=\prod_{i=1}^p\frac{e^{\ell \alpha (1-s)} }{q_\ell(\bm a_i)^{2s}B^{\ell s}}.\]
	By \eqref{eq:a1zn<2} and \eqref{eq:condn2},
	\[e^{n\alpha}\le e^{n\epsilon}a_1(z_n)\le e^{n\epsilon}B^{ns_{n,2}}\le e^{n\epsilon}B^{n(s+O(\epsilon))},\]
	which is equivalent to
	\begin{equation}\label{eq:ealpha<}
		e^{\alpha}\le  e^\epsilon B^{s+O(\epsilon)}.
	\end{equation}
	By decreasing $\epsilon$ if necessary, we have
	\[e^{\alpha(1-s)}\le  B^{s}.\]
	Therefore,
	\begin{equation}\label{eq:qqq<-2}
		\mu_2(I_{k+\ell_0+p\ell}(\tilde{\bm u},\bm a_1,\dots,\bm a_p))\le \prod_{i=1}^p\frac{1 }{q_\ell(\bm a_i)^{2s}} \le \frac{q_{k+\ell_0+p \ell}(\tilde{\bm u},\bm a_1,\dots,\bm a_p)^{-2t}}{|I_{k+\ell_0}(\tilde {\bm u})|^{t}},
	\end{equation}
	where the last inequality follows the same argument identical to \eqref{eq:qkl-1}.

(2) By the definition of $\mu_2$,
\begin{align*}
	\mu_2(\mathcal I_2(\tilde{\bm u},\bm a_1,\dots,\bm a_m,c))
	=&\bigg(\prod_{i=1}^m\frac{e^{ \ell \alpha (1-s)}}{q_\ell(\bm a_i)^{2s}B^{\ell s}}\bigg)\cdot \frac{2}{e^{n(\alpha+\epsilon)}}\notag\\
	=& \bigg(\prod_{i=1}^m\frac{1}{q_\ell(\bm a_i)^{2s} }\bigg) \cdot \frac{e^{m \ell \alpha (1-s)}} {B^{m\ell s}}\cdot \frac{2}{e^{n(\alpha+\epsilon)}}.
\end{align*}
Though the setting is slightly different, one can follow the proof of \eqref{eq:ml>n-1} and show that whenever $n$ is large enough,
\begin{equation}\label{eq:n<ml}
 n(1-\epsilon)	 \le m\ell \le n
\end{equation}
since $n=k+m\ell+\ell_0$ and $\ell$ is fixed.
Hence, using $2e^{n(\alpha+\epsilon)}\le c \le 3e^{n(\alpha+\epsilon)}$ we get
\[\begin{split}
	\frac{e^{m \ell \alpha (1-s)}} {B^{m\ell s}}\cdot \frac{2}{e^{n(\alpha+\epsilon)}}&= \frac{e^{n\alpha(1-s)}} {B^{ns}}\cdot \frac{1}{e^{n\alpha}} \cdot e^{O(n\epsilon)}=\frac{1} {e^{n\alpha s}B^{ns}}\cdot e^{O(n\epsilon)}\\
	&=\frac{1} {c^sB^{ns}}\cdot e^{O(n\epsilon)}.
\end{split}\]
Again, by decreasing $\epsilon$ and increasing $s$ if necessary, we have
\[\frac{1} {c^sB^{ns}}\cdot e^{O(n\epsilon)}\le \frac{1} {e^{2nt\epsilon}c^tB^{nt}}.\]
By \eqref{eq:qqq<-2} and \eqref{lowerdist-2} it follows that
\[\begin{split}
	\mu_2(\mathcal I_2(\tilde{\bm u},\bm a_1,\dots,\bm a_m,c))&\le\frac{q_{k+\ell_0+m \ell}(\tilde{\bm u},\bm a_1,\dots,\bm a_m)^{-2t}}{|I_{k+\ell_0}(\tilde {\bm u})|^{t}}\cdot \frac{1} {e^{2nt\epsilon}c^tB^{nt}}\\
	&\ll \frac{|\mathcal I_2(\tilde{\bm u},\bm a_1,\dots,\bm a_m,c)|^t}{|I_{k+\ell_0}(\tilde {\bm u})|^t }.\qedhere
\end{split}\]

\end{proof}
\begin{rem}\label{r:ineq}
	We present an inequality that will be also used in later discussion:
	\[\begin{split}
		&e^{n(\alpha+\epsilon)(1+t)}\cdot q_{n}(\tilde{\bm u},\bm a_1,\dots,\bm a_m)^{2t}\cdot \bigg(\prod_{i=1}^m\frac{e^{ \ell \alpha (1-s)}}{q_\ell(\bm a_i)^{2s}B^{\ell s}}\bigg)\cdot \frac{2}{e^{n(\alpha+\epsilon)}}\\
		\ll & e^{n(\alpha+\epsilon)(1+t)} \cdot\frac{1}{|I_{k+\ell_0}(\tilde {\bm u})|^t}\cdot \frac{e^{m\ell\alpha(1-s)}}{B^{m\ell s}}\cdot \frac{2}{e^{n(\alpha+\epsilon)}}\hspace{10em}\text{by \eqref{eq:qqq<-2}}\\
		=&\frac{1}{|I_{k+\ell_0}(\tilde {\bm u})|^t}\cdot\frac{e^{n\alpha(1+t-s)}} {B^{ns}}\cdot e^{O(n\epsilon)}\hspace{15em}\text{by \eqref{eq:n<ml}}\\
		\le & \frac{1}{|I_{k+\ell_0}(\tilde {\bm u})|^t}\cdot\frac{B^{ns(1+t-s)}} {B^{ns}}\cdot e^{O(n\epsilon)}\hspace{15em}\text{by \eqref{eq:ealpha<}}\\
		\le & \frac{1}{|I_{k+\ell_0}(\tilde {\bm u})|^t},
	\end{split}\]
	where the last inequality follows from the fact that the error term $e^{O(n\epsilon)}$ can be made smaller than $B^{ns(s-t)}$.
\end{rem}

Next, the following lemma gives the estimation of  the $\mu_2$-measure of any ball $B(x, r)$ with $x\in[0,1] $ and $r>0$.

\begin{lem}\label{l:holder2}
	For any $ r>0 $ and $ x\in[0,1] $, we have
	\[\mu_2(B(x,r))\ll (M+2)^4(M+1)^{2\ell}\cdot\frac{r^t}{|I_{k+\ell_0}(\tilde {\bm u})|^t},\]
	where the unspecified constant is absolute.
\end{lem}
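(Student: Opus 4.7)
The argument will parallel the proof of Lemma~\ref{l:holder1}, replacing the combinatorial parameter $B^{nt}$ that indexed the digit $b$ by the parameter $e^{n(\alpha+\epsilon)}$ that indexes the digit $c$, and using the gap estimates of Lemma~\ref{l:pro En-2} in place of Lemma~\ref{l:pro En-1}. Without loss of generality, assume $x\in\mathcal I_2(\tilde{\bm u},\bm a_1,\dots,\bm a_m,c)$. If $r\ge|I_{k+\ell_0}(\tilde{\bm u})|$ the inequality is trivial since $\mu_2(B(x,r))\le 1$. Otherwise, there are three size regimes for $r$ to consider, corresponding to the three scales at which $\mu_2$-distributed mass is spread out.

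In the smallest regime, $r\le|I_{n+1}(\tilde{\bm u},\bm a_1,\dots,\bm a_m,c)|/18$, Lemma~\ref{l:pro En-2}(2) forces $B(x,r)$ to meet only the single fundamental interval $\mathcal I_2(\tilde{\bm u},\bm a_1,\dots,\bm a_m,c)$ containing $x$. Then I bound $\mu_2(B(x,r))$ by $\mu_2(\mathcal I_2)$ times the normalized Lebesgue measure, apply the elementary inequality $\min\{1,2r/|\mathcal I_2|\}\le 2(r/|\mathcal I_2|)^t$, and invoke Lemma~\ref{l:meamu2}(2) to absorb the factor $|\mathcal I_2|^t$.

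In the intermediate regime, $|I_{n+1}(\tilde{\bm u},\bm a_1,\dots,\bm a_m,c)|/18<r\le |I_n(\tilde{\bm u},\bm a_1,\dots,\bm a_m)|/(2(M+2)^4)$, the ball $B(x,r)$ lies inside the single $n$-th level basic cylinder $I_n(\tilde{\bm u},\bm a_1,\dots,\bm a_m)$ but may hit many $(n+1)$-th level basic cylinders. I will bound the count $\#\Delta_2(x)$ of such cylinders by the minimum of the total number of admissible even $c$'s (of order $e^{n(\alpha+\epsilon)}$) and the geometric count $r\cdot q_{n+1}^{2}$, then use $\min\{a,b\}\le a^{1-t}b^t$ to convert this into a power of $r^t$. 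This expected main obstacle here is to verify that the resulting expression
\[
\#\Delta_2(x)\cdot\max_c\mu_2(\mathcal I_2(\tilde{\bm u},\bm a_1,\dots,\bm a_m,c))
\]
is dominated by $r^t/|I_{k+\ell_0}(\tilde{\bm u})|^t$; this is precisely the content of the auxiliary inequality recorded in Remark~\ref{r:ineq}, where the slack in $s>t+\epsilon$ is exploited to absorb the error term $e^{O(n\epsilon)}$.

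In the largest regime, $|I_{k+\ell_0+(p+1)\ell}(\tilde{\bm u},\bm a_1,\dots,\bm a_{p+1})|/(2(M+2)^4)\le r<|I_{k+\ell_0+p\ell}(\tilde{\bm u},\bm a_1,\dots,\bm a_p)|/(2(M+2)^4)$ for some $0\le p\le m-1$, Lemma~\ref{l:pro En-2}(1) confines $B(x,r)$ to a single $(k+\ell_0+p\ell)$-th level basic cylinder. I will bound $\mu_2(B(x,r))$ by $\mu_2(I_{k+\ell_0+p\ell}(\tilde{\bm u},\bm a_1,\dots,\bm a_p))$, apply Lemma~\ref{l:meamu2}(1), then pass from level $p$ to level $p+1$ at the cost of the factor $(M+1)^{2\ell}$ using Proposition~\ref{p:basic property}(1), and finally convert the cylinder length into $r^t$ using the lower bound on $r$. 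Combining the three regimes yields the claim, with the constants $(M+2)^4(M+1)^{2\ell}$ arising only from Case~3.
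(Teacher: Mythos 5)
Your proposal follows essentially the same three-regime decomposition as the paper's own sketch proof, with the same trivial case, the same use of the gap estimates from Lemma~\ref{l:pro En-2}, the same counting bound for $\#\Delta_2(x)$ combined with the inequality in Remark~\ref{r:ineq} in the middle regime, and the same passage from level $p$ to level $p+1$ via Lemma~\ref{l:meamu2}(1) in the largest regime. The only deviation is cosmetic (you allow $p=0$ in the third case, which is harmless and arguably tidier), so the approach matches the paper's.
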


\begin{proof}[Sketch proof]
	It suffices to focus on the case $ r<|I_{k+\ell_0}(\tilde{\bm u})| $. We need to consider three cases according to Lemma \ref{l:pro En-2}.

	\noindent \textbf{Case 1}: $r\le|I_{n+1}(\tilde{\bm u},\bm a_1,\dots,\bm a_m,c)|/18$. In view of Lemma \ref{l:pro En-2} (2), $B(x,r)$ only intersects the fundamental interval $\mathcal I_2(\tilde{\bm u},\bm a_1,\dots,\bm a_m,c)$ to which $x$ belongs. By the same reason as Case 1 in Lemma \ref{l:holder1}, and using Lemma \ref{l:meamu2} (2) instead of Lemma \ref{l:meamu} (2), we deduce that
	\[\begin{split}
		\mu_2(B(x,r))\ll\frac{r^t}{|I_{k+\ell_0}(\tilde {\bm u})|^t}.
	\end{split}\]

	\noindent \textbf{Case 2}: $|I_{n+1}(\tilde{\bm u},\bm a_1,\dots,\bm a_m,c)|/18< r\le|I_{n}(\tilde{\bm u},\bm a_1,\dots,\bm a_m)|/(2(M+2)^4)$. In this case, the ball $B(x,r)$ intersects exactly the $n$th level basic cylinder  $I_{n}(\tilde{\bm u},\bm a_1,\dots,\bm a_m)$, but may intersect multiple $(n+1)$th level basic cylinders inside this cylinder. Let
	\[\Delta_2(x)=\{2e^{n(\alpha+\epsilon)}\le c\le 3e^{n(\alpha+\epsilon)}: \text{$c$ is even and } I_{n+1}(\tilde{\bm u},\bm a_1,\dots,\bm a_m,c)\cap B(x,r)\ne\emptyset\}.\]
	In comparison to Case 2 in Lemma \ref{l:holder1}, here the choices for $c$ is at most $e^{n(\alpha+\epsilon)}/2$ and each $(n+1)$th level basic cylinder $I_{n+1}(\tilde{\bm u},\bm a_1,\dots,\bm a_m,c)$ is of length at least
	\[\ge 2^{-1}q_{n+1}(\tilde{\bm u},\bm a_1,\dots,\bm a_m,c)^{-2}\ge 2^ {-1} 2^{-2} 3^{-2}q_{n}(\tilde{\bm u},\bm a_1,\dots,\bm a_m)^{-2}e^{-2n(\alpha+\epsilon)}.\]
	This gives
	\[\begin{split}
		\# \Delta_2(x)&\ll\min\{e^{n(\alpha+\epsilon)}/2, r\cdot q_{n}(\tilde{\bm u},\bm a_1,\dots,\bm a_m)^{2}e^{2n(\alpha+\epsilon)}\}\\
		&\le e^{n(\alpha+\epsilon)(1-t)}\cdot(r\cdot q_{n}(\tilde{\bm u},\bm a_1,\dots,\bm a_m)^{2}e^{2n(\alpha+\epsilon)})^t\\
		&=e^{n(\alpha+\epsilon)(1+t)}\cdot r^t\cdot q_{n}(\tilde{\bm u},\bm a_1,\dots,\bm a_m)^{2t}.
	\end{split}\]
	By the definition of $\mu_2$ and inequality presented in Remark \ref{r:ineq},
	\[\begin{split}
		\mu_2(B(x,r))\le& \#\Delta_2(x)\cdot\max_{2e^{n(\alpha+\epsilon)}\le c\le 3e^{n(\alpha+\epsilon)}\atop c\text{ is even}} \mu_2(\mathcal I_2(\tilde{\bm u},\bm a_1,\dots,\bm a_m,c))\\
		\ll  &e^{n(\alpha+\epsilon)(1+t)}\cdot r^t\cdot q_{n}(\tilde{\bm u},\bm a_1,\dots,\bm a_m)^{2t}\cdot \bigg(\prod_{i=1}^m\frac{e^{ \ell \alpha (1-s)}}{q_\ell(\bm a_i)^{2s}B^{\ell s}}\bigg)\cdot \frac{2}{e^{n(\alpha+\epsilon)}}\\
		\le & \frac{r^t}{|I_{k+\ell_0}(\tilde {\bm u})|^t}.
	\end{split}\]

	\noindent \textbf{Case 3}: $|I_{k+\ell_0+(p+1) \ell}(\tilde{\bm u},\bm a_1,\dots,\bm a_{p+1})|/(2(M+2)^4) \le r< | I_{k+\ell_0+p\ell}(\tilde{\bm u},\bm a_1,\dots,\bm a_{p})|/(2($ $M+2)^4) $ for some $1\le p\le m-1$.  In this case, the ball $B(x,r)$ only intersects one $(k+\ell_0+p \ell)$th level basic cylinder, i.e. $I_{ k+\ell_0+p \ell}(\tilde{\bm u},\bm a_1,\dots,\bm a_p)$. Hence, following the same line as Case 3 in Lemma \ref{l:holder1}, we have
	\[\begin{split}
		\mu_2(B(x,r))&\ll(M+2)^4(M+1)^{2\ell}\cdot\frac{r^t}{| I_{k+\ell_0}(\tilde{\bm u})|^t}.
	\end{split}\]

	Combining the estimation given in Cases 1--3, we arrive at the conclusion.
\end{proof}
\begin{proof}[Completing the proof of Theorem \ref{t:main}] The proof is the same as that in the end of Section \ref{s:1}, we leave out the details.
\end{proof}

\subsection{Case III: $s^*=\limsup\limits_{n \to \infty} s_n$ is obtained along a subsequence of $\{s_{n,3}\}_{n\ge 1}$}\label{s:3}
There exist infinitely many $n$ such that
\begin{equation}\label{eq:sn-3}
	s_n=s_{n,3}>t.
\end{equation}
Fix an $\epsilon<s^*-t$. By the definition of $s_n$,
\[s_{n,3}> s_{n,2}.\]
With Lemma \ref{l:a1(zn)>} (3), it follows that
\[a_1(z_n)\le B^{ns_{n,3}/2}<B^{n/2}.\]
By taking a subsequence, assume that integers satisfying \eqref{eq:sn-3} will ensure the existence of the following limit
\[\lim_{n\text{ satisfies \eqref{eq:sn-3}}\atop n\to\infty}\frac{\log a_1(z_n)}{n}=: \beta \le \frac{\log B}{2}.\]
Recall the definition of $s_{n,3}$. By the continuity of the pressure functions, we can infer from the above discussion that
\begin{equation}\label{pressure3}
s^*=\inf\{s\in [0,1]:P(T, -s\log|T'|-(s/2)\log B-s\beta)\le 0\}.
\end{equation}

By Proposition \ref{p:property of s}, there exist integers $ \ell \ge 2t/\epsilon +1$ and $ M $ such that the unique positive number $s=s(\ell,M)$ satisfying the equation
\begin{equation}\label{eq:defs-3}
	\sum_{\bm a\in\{1,\dots,M \}^\ell}\frac{B^{-\ell s/2}}{q_\ell(\bm a)^{2s}e^{\beta\ell s}}=1,
\end{equation}
is greater than $t+\epsilon$.

Choose a large integer $n\in \N$ so that \eqref{eq:sn-3} is satisfied and
\begin{equation}\label{eq:condn3}
	n-k\ge \ell,\quad e^{n(\beta-\epsilon)}\le a_1(z_n)\le e^{n(\beta+\epsilon)}.
\end{equation}
Write $n-k=m\ell+\ell_0$, where $0\le \ell_0<\ell$.

Let $\tilde{\bm u}=(\bm u,1^{\ell_0})\in\N^{k+\ell_0}$ and consider the following set defined by $(n+1)$th level cylinders:
\begin{equation}\label{eq:basiccyl-3}
	\{I_{n+1}(\tilde {\bm u},\bm a_1,\dots,\bm a_m,a_1(z_n)): ~\bm a_i\in\{1,\dots,M\}^\ell, 1\le i\le m\}.
\end{equation}
We stress that for the $(n+1)$th position, there is only one choice, that is $a_1(z_n)$. This makes the discussion much easier than previous two.
By \eqref{eq:Jn3>3}, there is an interval, denoted by $\mathcal I_3(\tilde{\bm u},\bm a_1,\dots,\bm a_m,a_1(z_n))$ such that
\begin{equation}\label{eq:interval-3}
	\mathcal I_3(\tilde{\bm u},\bm a_1,\dots,\bm a_m,a_1(z_n))\subset F_n\cap I_{n+1}(\tilde{\bm u},\bm a_1,\dots,\bm a_m,a_1(z_n)).
\end{equation}
Additionally, since $a_1(z_n)\le B^{n/2}$, by \eqref{eq:Jn3>3}
\[|\mathcal I_3(\tilde{\bm u},\bm a_1,\dots,\bm a_m,a_1(z_n))|\ge\frac{B^{-n/2}}{4q_n(\tilde{\bm u},\bm a_1,\dots,\bm a_m)^2a_1(z_n)}.\]

Define a probability measure $\mu_3$ supported on $F_n\cap I_{k+\ell_0}(\tilde{\bm u})$ as follows:
\[\mu_3=\sum_{\bm a_1\in\{1,\dots,M \}^\ell}\cdots\sum_{\bm a_m\in\{1,\dots,M \}^\ell}\bigg(\prod_{i=1}^m\frac{1}{q_\ell(\bm a_i)^{2s}e^{\beta\ell s} B^{\ell s/2}}\bigg)\cdot \lm^{(3)}_{\bm a_1,\dots,\bm a_m,a_1(z_n)},\]
where
\[\lm^{(3)}_{\bm a_1,\dots,\bm a_m,a_1(z_n)}:=\frac{\lm|_{\mathcal I_3(\tilde{\bm u},\bm a_1,\dots,\bm a_m,a_1(z_n))}}{\lm(\mathcal I_3(\tilde{\bm u},\bm a_1,\dots,\bm a_m,a_1(z_n)))}.\]

The next two lemmas describe the gap between fundamental intervals defined in \eqref{eq:interval-2} and the $\mu_3$-measures of these fundamental intervals, respectively. The first one follows the same lines as the proof of Lemma \ref{l:pro En-1}.
\begin{lem}\label{l:pro En-3}
	Let $ \mathcal I_3=\mathcal I_3(\tilde{\bm u},\bm a_1,\dots,\bm a_m,a_1(z_n)) $ and $ \mathcal I_3'=\mathcal I_3(\tilde{\bm u},\bm a_1',\dots,\bm a_m',a_1(z_n))$ be two intervals defined in \eqref{eq:interval-2}. If $ (\bm a_1,\dots,\bm a_{p-1})= (\bm a_1',\dots,\bm a_{p-1}') $ but $ \bm a_p \ne \bm a_p' $ for some $ 1\le p\le m $, then
	\[
	\dist(\mathcal I_3,\mathcal I_3')\ge\frac{|I_{k+\ell_0+p \ell}(\tilde{\bm u},\bm a_1,\dots,\bm a_p)|}{2(M+2)^4}.
	\]
\end{lem}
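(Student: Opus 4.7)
The plan is to mimic the proof of Lemma \ref{l:pro En-1} (1) essentially verbatim, the only structural difference being that in the present setting the last partial quotient is forced to equal $a_1(z_n)$ for both fundamental intervals, so the analogue of item (2) does not arise and we only need to locate the first block-wise disagreement. Write $(\bm a_1,\dots,\bm a_m)=(c_1,\dots,c_{m\ell})$ and $(\bm a_1',\dots,\bm a_m')=(c_1',\dots,c_{m\ell}')$, and let $i$ be the smallest index with $c_i\ne c_i'$. The hypothesis $\bm a_p\ne\bm a_p'$ with equality for earlier blocks gives $(p-1)\ell<i\le p\ell\le m\ell$, so $\dist(\mathcal I_3,\mathcal I_3')$ is at least the distance between the two level-$(k+\ell_0+i+1)$ cylinders containing $\mathcal I_3$ and $\mathcal I_3'$ respectively.

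I would then split into two sub-cases exactly as in the proof of Lemma \ref{l:pro En-1} (1). If $i<m\ell$, both $c_{i+1}$ and $c_{i+1}'$ lie in $\{1,\dots,M\}$, so Proposition \ref{C-D} places one of the cylinders $I_{k+\ell_0+i+1}(\tilde{\bm u},c_1,\dots,c_{i-1},c_i,M+1)$ or $I_{k+\ell_0+i+1}(\tilde{\bm u},c_1,\dots,c_{i-1},c_i',M+1)$ between the cylinders containing $\mathcal I_3$ and $\mathcal I_3'$; Proposition \ref{p:basic property} (2) then bounds its length from below by
\[
\frac{|I_{k+\ell_0+i-1}(\tilde{\bm u},c_1,\dots,c_{i-1})|}{2(M+2)^4}\ge \frac{|I_{k+\ell_0+p\ell}(\tilde{\bm u},\bm a_1,\dots,\bm a_p)|}{2(M+2)^4},
\]
reproducing the estimate \eqref{eq:lowerdist-1}. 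If $i=m\ell$ then $c_{i+1}=c_{i+1}'=a_1(z_n)$, which may be large, but the same separation argument works using the cylinders $I_{n+1}(\tilde{\bm u},c_1,\dots,c_{m\ell-1},c_{m\ell},1)$ and $I_{n+1}(\tilde{\bm u},c_1,\dots,c_{m\ell-1},c_{m\ell}',1)$, since $1$ is always available as an index; this is the exact analogue of Case 2 in the proof of Lemma \ref{l:pro En-1} (1), and yields the same bound.

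I do not anticipate a genuine obstacle here: the case $i=m\ell$ is the only place where one might worry, because $a_1(z_n)$ is not bounded by $M$, but the trick of inserting a sub-cylinder indexed by $1$ side-steps this entirely. The whole lemma is really a routine transcription of the earlier argument, specialized to the situation in which no final index $b$ varies.
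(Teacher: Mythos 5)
Your plan coincides with the paper's intended proof-by-analogy (the paper only remarks that Lemma~\ref{l:pro En-3} ``follows the same lines as the proof of Lemma~\ref{l:pro En-1}''), and the sub-case $i<m\ell$ transcribes correctly. However, the sub-case $i=m\ell$ has a genuine gap. You write that ``$1$ is always available as an index'' and use the cylinders with last index~$1$ as separators, but nothing in Section~4.3 rules out $a_1(z_n)=1$: this happens, for instance, when $z_n\in(1/2,1)$ for every $n$ in the relevant subsequence (giving $\beta=0$), which is perfectly compatible with $s_{n,3}>s_{n,2}$. When $a_1(z_n)=1$ the cylinders $I_{n+1}(\tilde{\bm u},c_1,\dots,c_{m\ell-1},c_{m\ell},1)$ and $I_{n+1}(\tilde{\bm u},c_1,\dots,c_{m\ell-1},c_{m\ell}',1)$ are precisely the cylinders \emph{containing} $\mathcal I_3$ and $\mathcal I_3'$, so neither can lie between them. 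In Lemma~\ref{l:pro En-1} this never arises, because there the last index $b$ is forced to be even and hence at least~$2$, so index~$1$ is a genuinely free separator; that is the feature of the construction your argument silently relies on and which is absent in Case~III.

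The fix is short but should be stated. If $a_1(z_n)\ge 2$, your argument works verbatim. If $a_1(z_n)=1$, note that the two index-$1$ sub-cylinders sit at the same extremity (left or right, according to the parity of $n$) of their disjoint parent $n$th-level cylinders; hence exactly one of them lies at the end nearest the other parent, and the entire index-$\ge 2$ range of that other parent falls strictly between $\mathcal I_3$ and $\mathcal I_3'$. In particular one of $I_{n+1}(\tilde{\bm u},c_1,\dots,c_{m\ell-1},c_{m\ell},2)$ or $I_{n+1}(\tilde{\bm u},c_1,\dots,c_{m\ell-1},c_{m\ell}',2)$ separates them, and since $c_{m\ell},c_{m\ell}'\le M$ and $2+1\le M+2$, the computation of \eqref{eq:lowerdist-1} still gives $\dist(\mathcal I_3,\mathcal I_3')\ge |I_{n}(\tilde{\bm u},\bm a_1,\dots,\bm a_m)|/(2(M+2)^4)$. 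You flagged the possible largeness of $a_1(z_n)$ as the only concern but overlooked that it can also equal~$1$; both ends of the range matter here.
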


Instead of giving complete proof of the following lemma, we merely point out which changes have to be made.
\begin{lem}\label{l:meamu3}
	The following statements hold:
	\begin{enumerate}[\upshape(1)]
		\item For any $ (\tilde{\bm u},\bm a_1,\dots,\bm a_p) $ with $ 0\le p\le m $,
		\[\mu_3 (I_{k+\ell_0+p\ell}(\tilde{\bm u},\bm a_1,\dots,\bm a_p))\ll\frac{q_{k+\ell_0+p \ell}(\tilde{\bm u},\bm a_1,\dots,\bm a_p)^{-2t}}{|I_{k+\ell_0}(\tilde {\bm u})|^{t}}.\]
		\item For any $ (\tilde{\bm u},\bm a_1,\dots,\bm a_m,a_1(z_n))$,
		\[\mu_3(\mathcal I_3(\tilde{\bm u},\bm a_1,\dots,\bm a_m,a_1(z_n)))\ll \frac{|\mathcal I_3(\tilde{\bm u},\bm a_1,\dots,\bm a_m,a_1(z_n))|^t}{|I_{k+\ell_0}(\tilde {\bm u})|^t }.\]
	\end{enumerate}
\end{lem}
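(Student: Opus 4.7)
The plan is to follow closely the template of Lemma \ref{l:meamu2}, adapting the computations to the new weights in $\mu_3$ and using the lower bound on $|\mathcal I_3|$ coming from \eqref{eq:Jn3>3}. A pleasant simplification over Cases I and II is that the $(n+1)$th partial quotient is pinned to the single value $a_{n+1}=a_1(z_n)$ (see \eqref{eq:basiccyl-3}), so the definition of $\mu_3$ carries one fewer weight than $\mu_1$ and $\mu_2$, and no additional summation over a free index is needed.

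For part (1), unfold the definition of $\mu_3$:
\[
\mu_3(I_{k+\ell_0+p\ell}(\tilde{\bm u},\bm a_1,\dots,\bm a_p))=\prod_{i=1}^{p}\frac{1}{q_\ell(\bm a_i)^{2s}e^{\beta\ell s}B^{\ell s/2}}\le \prod_{i=1}^{p}q_\ell(\bm a_i)^{-2s},
\]
the last inequality using $\beta\ge 0$ and $B>1$. Since $s>t+\epsilon$ with $\ell>2t/\epsilon+1$, the chain of inequalities \eqref{eq:ql-1}--\eqref{eq:qkl-1} applies verbatim and yields the desired bound $\ll q_{k+\ell_0+p\ell}(\tilde{\bm u},\bm a_1,\dots,\bm a_p)^{-2t}/|I_{k+\ell_0}(\tilde{\bm u})|^{t}$.

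For part (2), specializing the computation above to $p=m$ (recall $n=k+\ell_0+m\ell$) and then reattaching the factor associated with the final block,
\[
\mu_3(\mathcal I_3)\ll \frac{q_n(\tilde{\bm u},\bm a_1,\dots,\bm a_m)^{-2t}}{|I_{k+\ell_0}(\tilde{\bm u})|^{t}}\cdot\frac{1}{e^{m\beta\ell s}B^{m\ell s/2}}.
\]
On the other hand, $|\mathcal I_3|\ge B^{-n/2}/(4q_n(\tilde{\bm u},\bm a_1,\dots,\bm a_m)^{2}a_1(z_n))$ by \eqref{eq:Jn3>3}, so the target inequality reduces to
\[
\frac{1}{e^{m\beta\ell s}B^{m\ell s/2}}\ll \frac{B^{-nt/2}}{a_1(z_n)^{t}}.
\]
Using \eqref{eq:condn3} to replace $a_1(z_n)$ by $e^{n(\beta\pm\epsilon)}$ and the analogue of \eqref{eq:n<ml} (giving $n(1-\epsilon)\le m\ell\le n$), together with the uniform bound $\beta\le(\log B)/2$, the inequality to verify becomes
\[
e^{-n\beta(s-t)}B^{-n(s-t)/2}\cdot e^{O(n\epsilon)}\ll 1,
\]
with an implicit constant in $O(n\epsilon)$ depending only on $\log B$ and absolute constants. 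Since $s-t>\epsilon$ is a fixed positive gap and $\beta\ge 0$, $B>1$, this holds provided $\epsilon$ was chosen small enough at the outset, exactly in the spirit of Remark \ref{r:ineq}.

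The main obstacle is nothing substantive, only the bookkeeping of the $e^{O(n\epsilon)}$ error factors and the verification that the exponential gain $B^{-n(s-t)/2}$ overwhelms them after suitable shrinkage of $\epsilon$ (and, if necessary, enlargement of $\ell$ and $M$ so that the solution $s$ of \eqref{eq:defs-3} exceeds $t+2\epsilon$ instead of merely $t+\epsilon$). All other ingredients—namely the elementary bound $q_\ell^{-2s}\le(2q_\ell)^{-2t}$ of \eqref{eq:ql-1}, the sub-multiplicativity estimate on $q_{k+\ell_0+p\ell}$ used in \eqref{eq:qkl-1}, and the identification $|I_{k+\ell_0}(\tilde{\bm u})|\asymp q_{k+\ell_0}(\tilde{\bm u})^{-2}$—are reused without modification.
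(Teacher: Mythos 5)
Your proposal is correct and follows essentially the same route as the paper: part (1) drops the $e^{\beta\ell s}B^{\ell s/2}$ factors and reuses \eqref{eq:ql-1}--\eqref{eq:qkl-1}, and part (2) isolates the residual factor $e^{-m\ell\beta s}B^{-m\ell s/2}$, replaces $m\ell$ by $n(1\pm\epsilon)$ and $a_1(z_n)$ by $e^{n(\beta\pm\epsilon)}$, and absorbs the $e^{O(n\epsilon)}$ error into the fixed gain $e^{-n\beta(s-t)}B^{-n(s-t)/2}$ after shrinking $\epsilon$. The only cosmetic difference is that you state the reduction to a single inequality $\ll 1$ explicitly, while the paper derives $a_1(z_n)^{-t}B^{-nt/2}$ and then appeals to \eqref{eq:Jn3>3} implicitly.
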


\begin{proof}[Sketch proof]
	(1) Following the same lines as in Lemma \ref{l:meamu} (1), one has
	\[\begin{split}
		\mu_3(I_{k+\ell_0+p\ell}(\tilde{\bm u},\bm a_1,\dots,\bm a_p))&=\prod_{i=1}^p\frac{1}{q_\ell(\bm a_i)^{2s}e^{\beta\ell s} B^{\ell s/2}}\le \prod_{i=1}^p\frac{1}{q_\ell(\bm a_i)^{2s}}\\
		&\ll \frac{q_{k+\ell_0+p \ell}(\tilde{\bm u},\bm a_1,\dots,\bm a_p)^{-2t}}{|I_{k+\ell_0}(\tilde {\bm u})|^{t}}.
	\end{split}\]

	(2) By the definition of $\mu_3$,
	\begin{align*}
		\mu_3(\mathcal I_3(\tilde{\bm u},\bm a_1,\dots,\bm a_m,a_1(z_n)))
		=&\prod_{i=1}^m\frac{1}{q_\ell(\bm a_i)^{2s}e^{\beta\ell s} B^{\ell s/2}}.
	\end{align*}
	In view of item (2), we only need to estimate $\prod_{i=1}^m(e^{\beta\ell s} B^{\ell s/2})^{-1}=e^{-m\ell \beta s} B^{-m\ell s/2}$. Though the setting is slightly different, one can follow the proof of \eqref{eq:ml>n-1} and show that whenever $n$ is large enough,
	\[  n(1-\epsilon) \le m\ell\le n,\]
	since $n=k+m\ell+\ell_0$ and $\ell$ is fixed. Hence, by \eqref{eq:condn3},
	\[\begin{split}
	e^{-m\ell\beta s}B^{-m\ell s/2}=e^{-n\beta s}B^{-ns/2}\cdot e^{O(n\epsilon)}=a_1(z_n)^{-s}B^{-ns/2}\cdot e^{O(\epsilon)}.
	\end{split}\]
	Since  $a_1(z_n)=e^{n(\beta+O(\epsilon))}$, by decreasing $\epsilon$ if necessary, we have
	\[a_1(z_n)^{-s}B^{-ns/2}\cdot e^{O(\epsilon)}\le a_1(z_n)^{-t}B^{-nt/2}.\]
	This together with item (1) yields the conclusion.
\end{proof}

Next, the following lemma gives the estimation of  the $\mu_3$-measure of arbitrary ball $B(x, r)$ with $x \in[0,1]$ and $r>0$.

\begin{lem}\label{l:holder3}
	For any $ r>0 $ and $ x\in[0,1] $, we have
	\[\mu_2(B(x,r))\ll (M+2)^4(M+1)^{2\ell}\cdot \frac{r^t}{|I_{k+\ell_0}(\tilde {\bm u})|^t}.\]
\end{lem}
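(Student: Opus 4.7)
The plan is to imitate the strategy used in Lemmas \ref{l:holder1} and \ref{l:holder2}, while exploiting a structural simplification peculiar to Case III: since the $(n+1)$th partial quotient appearing in the basic cylinders \eqref{eq:basiccyl-3} is forced to equal $a_1(z_n)$, every $n$th level basic cylinder $I_n(\tilde{\bm u},\bm a_1,\dots,\bm a_m)$ contains exactly one fundamental interval $\mathcal I_3$. This collapses the three-case analysis of the preceding two lemmas---in particular the delicate intermediate range that required counting admissible values of the $(n+1)$th partial quotient---into a clean two-case argument.

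After disposing of the trivial range $r \ge |I_{k+\ell_0}(\tilde{\bm u})|$ (where $\mu_3(B(x,r)) \le 1 \le r^t/|I_{k+\ell_0}(\tilde{\bm u})|^t$), I would assume $x \in \mathcal I_3(\tilde{\bm u},\bm a_1,\dots,\bm a_m,a_1(z_n))$ and split according to the hierarchy supplied by Lemma \ref{l:pro En-3}. The first case, $r \le |I_n(\tilde{\bm u},\bm a_1,\dots,\bm a_m)|/(2(M+2)^4)$, is handled by observing that $B(x,r)$ meets no other fundamental interval in $I_{k+\ell_0}(\tilde{\bm u})$, so $\mu_3(B(x,r)) = \mu_3(\mathcal I_3) \cdot \lambda_{\bm a_1,\dots,\bm a_m,a_1(z_n)}^{(3)}(B(x,r))$; the normalized-Lebesgue factor is controlled via $\min\{1,2r/|\mathcal I_3|\} \le 2(r/|\mathcal I_3|)^t$, and Lemma \ref{l:meamu3} (2) converts this into the desired bound $\mu_3(B(x,r)) \ll r^t/|I_{k+\ell_0}(\tilde{\bm u})|^t$.

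The second case, $|I_{k+\ell_0+(p+1)\ell}(\tilde{\bm u},\bm a_1,\dots,\bm a_{p+1})|/(2(M+2)^4) \le r < |I_{k+\ell_0+p\ell}(\tilde{\bm u},\bm a_1,\dots,\bm a_p)|/(2(M+2)^4)$ for some $0 \le p \le m-1$, uses Lemma \ref{l:pro En-3} to confine $B(x,r)$ to the unique $(k+\ell_0+p\ell)$th level basic cylinder it touches. Monotonicity and Lemma \ref{l:meamu3} (1) give $\mu_3(B(x,r)) \ll q_{k+\ell_0+p\ell}(\tilde{\bm u},\bm a_1,\dots,\bm a_p)^{-2t}/|I_{k+\ell_0}(\tilde{\bm u})|^t$; the ratio bound $q_{k+\ell_0+(p+1)\ell}/q_{k+\ell_0+p\ell} \le 2(M+1)^\ell$ from Proposition \ref{p:basic property} (2), together with the conversion $q_{k+\ell_0+(p+1)\ell}^{-2t} \ll |I_{k+\ell_0+(p+1)\ell}|^t \le (2(M+2)^4)^t r^t$, then yields the claimed $(M+2)^4(M+1)^{2\ell} r^t/|I_{k+\ell_0}(\tilde{\bm u})|^t$ bound.

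The only point that deserves extra care is to check that the borderline $p = m$ of the second case really is subsumed by the first; in Lemmas \ref{l:holder1} and \ref{l:holder2} this is precisely where a nontrivial covering-number count (e.g.\ via $\#\Delta_1(x)$, $\#\Delta_2(x)$) was needed, but the uniqueness of the admissible $(n+1)$th partial quotient here removes that obstacle, so the step becomes automatic and no counting argument is required.
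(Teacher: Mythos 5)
Your proposal is correct and takes essentially the same approach as the paper's own (very brief) sketch: after the trivial large-$r$ range, split at the scale $|I_n(\tilde{\bm u},\bm a_1,\dots,\bm a_m)|/(2(M+2)^4)$, use Lemma \ref{l:pro En-3} together with Lemma \ref{l:meamu3}(2) in the small-$r$ case and Lemma \ref{l:meamu3}(1) plus the ratio bound $q_{k+\ell_0+(p+1)\ell}/q_{k+\ell_0+p\ell}\le 2(M+1)^\ell$ in the intermediate case. Your observation that the uniqueness of the $(n+1)$th partial quotient $a_1(z_n)$ eliminates the counting step involving $\#\Delta(x)$ is exactly the reason the paper's three-case analysis collapses to two, and your inclusion of $p=0$ in the second case is, if anything, slightly more careful than the paper's statement of the case ranges; note also that the $\mu_2$ in the lemma's conclusion is a typo in the source for $\mu_3$, which you have silently and correctly repaired.
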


\begin{proof}
	Assume that $ x\in \mathcal I_3(\tilde{\bm u},\bm a_1,\dots,\bm a_m,a_1(z_n)) $ and $r>0$. To estimate $\mu_3(B(x,r))$, compared to Lemmas \ref{l:holder1} and \ref{l:holder2}, only two cases need to be considered instead of three, because there is only one choice for the $(n+1)$th position of the basic cylinder (see \eqref{eq:basiccyl-3}), namely $a_1(z_n)$. According to Lemma \ref{l:pro En-3}, the proof is split into two cases:

	\noindent \textbf{Case 1}: $r\le\dfrac{|I_{k+\ell_0+m \ell}(\tilde{\bm u},\bm a_1,\dots,\bm a_{p+1})|}{2(M+2)^4}$.

	\noindent \textbf{Case 2}: $\dfrac{|I_{k+\ell_0+(p+1) \ell}(\tilde{\bm u},\bm a_1,\dots,\bm a_{p+1})|}{2(M+2)^4}\le r< \dfrac{|I_{k+\ell_0+p \ell}(\tilde{\bm u},\bm a_1,\dots,\bm a_{p})|}{2(M+2)^4}$ for some $1\le p\le m-1$.

	The argument is similar to Cases 1 and 3 in Lemma \ref{l:holder1} (or Cases 1 and 3 in Lemma \ref{l:holder2}), respectively. We omit the details.
\end{proof}
\begin{proof}[Completing the proof of Theorem \ref{t:main}] The proof is the same as that in the end of Section \ref{s:1}, we leave out the details.
\end{proof}

 {\bf Acknowledgement.}
This work is supported by
the {\color{red}Fundamental} Research {\color{red}Funds} for the Central Universities (No. SWU-KQ24025).

\bibliographystyle{amsplain}

\end{document}